\documentclass[a4paper]{amsart}
\usepackage{txfonts, amsmath,amstext,amsthm,amscd,amsopn,verbatim,amssymb, amsfonts}
\usepackage{fullpage}

\usepackage{float}
\restylefloat{table}

\usepackage{tikz}
\usepackage{tikz-cd}

\usepackage[all,cmtip]{xy}

\usetikzlibrary{matrix}
\usetikzlibrary{shapes}
\usetikzlibrary{arrows}
\usetikzlibrary{calc,3d}
\usetikzlibrary{decorations,decorations.pathmorphing,decorations.pathreplacing,decorations.markings}
\usetikzlibrary{through}

\tikzset{ext/.style={circle, draw,inner sep=1pt},int/.style={circle,draw,fill,inner sep=1.4pt},nil/.style={inner sep=1pt}}
\tikzset{cy/.style={circle,draw,fill,inner sep=2pt},scy/.style={circle,draw,inner sep=2pt},scyx/.style={draw,cross out,inner sep=2pt},scyt/.style={draw,regular polygon,regular polygon sides=3,inner sep=0.95pt}}
\tikzset{exte/.style={circle, draw,inner sep=3pt},inte/.style={circle,draw,fill,inner sep=3pt}}
\tikzset{diagram/.style={matrix of math nodes, row sep=3em, column sep=2.5em, text height=1.5ex, text depth=0.25ex}}
\tikzset{diagram2/.style={matrix of math nodes, row sep=0.5em, column sep=0.5em, text height=1.5ex, text depth=0.25ex}}

\tikzset{
  rightblue/.style={
    decoration={markings,mark=at position .8 with {\arrow[scale=1.2,blue]{latex}}},
    postaction={decorate},begin
    shorten >=0.4pt}}
\tikzset{
  leftblue/.style={
    decoration={markings,mark=at position .6 with {\arrowreversed[scale=1.2,blue]{latex}}},
    postaction={decorate},
    shorten >=0.4pt}}
\tikzset{
  rightred/.style={
    decoration={markings,mark=at position .4 with {\arrow[scale=1.2,red]{latex}}},
    postaction={decorate},
    shorten >=0.4pt}}
\tikzset{
  leftred/.style={
    decoration={markings,mark=at position .2 with {\arrowreversed[scale=1.2,red]{latex}}},
    postaction={decorate},
    shorten >=0.4pt}}
    
\tikzset{
  crossed/.style={
    decoration={markings,mark=at position .5 with {\arrow{|}}},
    postaction={decorate},
    shorten >=0.4pt}}

\newcommand{\Ed}{{
\begin{tikzpicture}[baseline=-.8ex,scale=.5]
\node[nil] (a) at (0,0) {};
\node[nil] (b) at (1,0) {};
\draw (a) edge[-latex] (b);
\end{tikzpicture}}}

\newcommand{\dE}{{
\begin{tikzpicture}[baseline=-.8ex,scale=.5]
\node[nil] (a) at (0,0) {};
\node[nil] (b) at (1,0) {};
\draw (a) edge[latex-] (b);
\end{tikzpicture}}}

\newcommand{\EdE}{{
\begin{tikzpicture}[baseline=-.65ex,scale=.5]
 \node[nil] (a) at (0,0) {};
 \node[int] (b) at (1,0) {};
 \node[nil] (c) at (2,0) {};
 \draw (a) edge[-latex] (b);
 \draw (b) edge[latex-] (c);
\end{tikzpicture}}}

\newcommand{\Ess}{{
\begin{tikzpicture}[baseline=-.65ex,scale=.5]
 \node[nil] (a) at (0,0) {};
 \node[nil] (c) at (1.4,0) {};
 \draw (a) edge[crossed,->] (c);
\end{tikzpicture}}}

\newcommand{\ET}{{
\begin{tikzpicture}[baseline=-.65ex,scale=.5]
 \node[nil] (a) at (0,0) {};
 \node[nil] (c) at (1,0) {};
 \draw (a) edge[very thick,->] (c);
\end{tikzpicture}}}

\usepackage{chngcntr}
\counterwithin{figure}{section}
\theoremstyle{plain}
  \newtheorem{thm}[figure]{Theorem}
  \newtheorem{defi}[figure]{Definition}
  \newtheorem{prop}[figure]{Proposition}
  \newtheorem{defprop}[figure]{Definition/Proposition}
  \newtheorem{cor}[figure]{Corollary}
  
  \newtheorem{lemma}[figure]{Lemma}
\theoremstyle{definition}
  
  \newtheorem{rem}[figure]{Remark}

\newcommand{\K}{{\mathbb{K}}}
\newcommand{\Z}{{\mathbb{Z}}}

\newcommand{\sym}{{\mathbb{S}}}

\newcommand{\GC}{\mathrm{GC}}
\newcommand{\HGC}{\mathrm{HGC}}

\newcommand{\RGC}{\mathrm{RGC}}
\newcommand{\OGC}{\mathrm{OGC}}
\newcommand{\SGC}{\mathrm{SGC}}

\newcommand{\G}{\mathrm{G}}
\newcommand{\HG}{\mathrm{HG}}
\newcommand{\OG}{\mathrm{OG}}

\newcommand{\mV}{\mathrm{V}}
\newcommand{\mE}{\mathrm{E}}
\newcommand{\mH}{\mathrm{H}}
\newcommand{\mB}{\mathrm{B}}
\newcommand{\mD}{\mathrm{D}}
\newcommand{\mO}{\mathrm{O}}
\newcommand{\mS}{\mathrm{S}}

\DeclareMathOperator{\sgn}{sgn}
\DeclareMathOperator{\id}{id}
\DeclareMathOperator{\Def}{Def}

\newcommand{\grac}{\mathrm{grac}}
\newcommand{\Grac}{\mathrm{Grac}}
\newcommand{\rgra}{\mathrm{rgra}}
\newcommand{\RGra}{\mathrm{RGra}}

\newcommand{\bu}{{\bullet}}

\newcommand{\LieB}{\textsf{LieB}}

\newcommand{\hoLieB}{\textsf{hoLieB}}

\begin{document}
\title{Hairy graphs to ribbon graphs via a fixed source graph complex}

\author{Assar Andersson}
\author{Marko \v Zivkovi\' c}
\address{Mathematics Research Unit, University of Luxembourg\\ 
Maison du Nombre, 6, avenue de la Fonte, L-4364 Esch-sur-Alzette\\
Grand Duchy of Luxembourg}


\keywords{Graph Complexes, Hairy Graph complex, Ribbon Graph Complex}

\begin{abstract}
We show that the hairy graph complex $(\HGC_{n,n},d)$ appears as an associated graded complex of the oriented graph complex $(\mO \GC_{n+1},d)$, subject to the filtration on the number of targets, or equivalently sources, called the fixed source graph complex. The fixed source graph complex $(\mO \GC_1,d_0)$ maps into the ribbon graph complex $\RGC$, which models the moduli space of Riemann surfaces with marked points. The full differential $d$ on the oriented graph complex $\mO \GC_{n+1}$ corresponds to the deformed differential $d+h$ on the hairy graph complex $\HGC_{n,n}$, where $h$ adds a hair. This deformed complex $(\HGC_{n,n},d+h)$ is already known to be quasi-isomorphic to standard Kontsevich's graph complex $\GC^2_n$. This gives a new connection between the standard and the oriented version of Kontsevich's graph complex.
\end{abstract}

\maketitle

\section{Introduction}

The main motivation for the present work is the explicit morphism of complexes
$$F:(\mO  \GC_1, \delta) \to (\RGC[1], \delta+\Delta_1)$$
constructed by S.\ Merkulov and T.\ Willwacher in \cite{MW}. 
Here $(\mO  \GC_1, \delta)$ is the oriented version of Kontsevich's graph complex, and $ (\RGC, \delta+\Delta_1)$ is a complex of ribbon graphs.
Ribbon graphs (sometimes called fat graphs) models Riemann surfaces with marked points \cite{Penner}. For our ribbon graph complex $\RGC$, we get 
\begin{equation}
H^k(\RGC,\delta) \cong \prod_{g,n} \left(H_c^{k-n}(\mathcal{M}_{g,n},\mathbb{Q}) \otimes\sgn_n\right)^{\sym_n} \oplus \begin{cases} \mathbb{Q} & \text{for } k=1,5,9,\ldots\\
0 &\text{otherwise,} \end{cases}
\end{equation}
where $H_c(\mathcal{M}_{g,n},\mathbb{Q})$ is the compact support cohomology of the moduli space of Riemann surfaces of genus $g$ with $n$ marked points, see \cite{Kont3} for more details. 
In this context, the differential $\delta+\Delta_1$ constructed in \cite{MW} is a deformation of the classical differential $\delta$ on $\RGC$. A simple observation gives that the same explicit formula $F$ is also a map of complexes
\begin{equation}\label{eq:Fintro}
F:(\mO  \GC_1, \delta_0) \to (\RGC[1], \delta),
\end{equation}
where it is instead the differential on $\mO\GC_1$ that is not standard. The differential $\delta_0$ splits vertices of graphs in $\mO \GC$ in a way that preserves the number of target vertices.  To the authors best knowledge, the \emph{(oriented) fixed target graph complex} $(\mO  \GC_n, \delta_0)$ has not been studied earlier. In this paper, we show that there is a quasi-isomorphism from the oriented graph complex with this new differential to the better known hairy graph complex $\HGC_{n,n}$, studied in e.g.\ \cite{AT}, \cite{DGC2}. 
\begin{thm}\label{thm:main}
There is a map of graded vector spaces
$$G:\mO  \GC_{n+1} \to \HGC_{n,n}$$
such that the associated morphisms of complexes
$$G:(\mO  \GC_{n+1}, \delta_0)  \to (\HGC_{n,n},\delta).$$
and
$$G:(\mO \GC_{n+1},\delta) \to (\HGC_{n,n},\delta+\chi)$$
are quasi-isomorphisms. Here $\chi$ is the extra differential on $\HGC_{n,n}$ that adds a hair, considered in \cite{DGC2}. 
\end{thm}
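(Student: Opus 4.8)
The plan is to construct $G$ from the combinatorial correspondence between the univalent target (sink) vertices of an oriented graph and the hairs of a hairy graph. Given an acyclic oriented graph representing a class in $\mO\GC_{n+1}$, I would reinterpret each univalent target as a hair attached to the adjacent internal vertex, and send to zero any graph whose extremal (target) structure cannot be matched with that of a hairy graph. First I would fix the degree and edge-orientation conventions so that this assignment is a well-defined map of graded vector spaces $G:\mO\GC_{n+1}\to\HGC_{n,n}$, and record the decomposition $\delta=\delta_0+\delta_+$, where $\delta_0$ is the part of vertex splitting that preserves the number of targets and $\delta_+$ is the part that creates exactly one new target.

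The second step is to check the two chain-map identities for this single $G$. Writing the hairy differential as $\delta$ (splitting of internal vertices) and the extra piece as $\chi$ (adding a hair), I would verify $G\delta_0=\delta G$ and $G\delta_+=\chi G$ termwise: a target-preserving split of an internal vertex matches a hairy vertex split, while the creation of a new univalent target matches the attachment of a new hair, exactly as the introduction indicates ($\delta\leftrightarrow\delta+\chi$). The subtlety is that $\delta_+$ may also create \emph{multivalent} targets and that $\delta_0$ may split an existing target while only changing its valence; one must confirm that all such terms either lie in $\ker G$ or cancel after symmetrization, so that the two composites agree on the nose.

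The heart of the argument is the first quasi-isomorphism $G:(\mO\GC_{n+1},\delta_0)\to(\HGC_{n,n},\delta)$. I would prove it by an auxiliary filtration/spectral-sequence argument in the spirit of the known comparison $\mO\GC_{n+1}\simeq\GC_n$: filter by the number of non-hair internal vertices, or by loop order, so that $G$ becomes filtered, and analyze the associated graded. On the associated graded the surviving differential should act only on the \emph{purely directed} part of the graph, namely the subconfiguration of directed edges not already recorded by the hairs, and the main task is to exhibit an explicit contracting homotopy — locally modifying the directed structure at a canonically chosen target or edge — showing that this auxiliary complex is acyclic away from the configurations that are precisely the hairy graphs. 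I expect this acyclicity, together with the attendant sign and automorphism bookkeeping for odd versus even edge orientations, to be the principal obstacle.

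Finally I would upgrade the first quasi-isomorphism to the second. Equip $(\mO\GC_{n+1},\delta)$ with the target-number filtration and $(\HGC_{n,n},\delta+\chi)$ with the hair-number filtration; by the second step $G$ is filtered and its associated graded is exactly the first quasi-isomorphism, since $\operatorname{gr}\delta=\delta_0$ and $\operatorname{gr}(\delta+\chi)=\delta$. Because both complexes decompose as products over genus and hair number with finite-dimensional pieces in each fixed loop order and degree, the filtrations are bounded in each summand and the associated spectral sequences converge, so the comparison theorem promotes the associated-graded quasi-isomorphism to a quasi-isomorphism for the full differentials. The remaining care here is to confirm completeness and exhaustiveness of the filtration on the infinite product, so that this convergence is genuine rather than formal.
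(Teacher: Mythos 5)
Your proposal founders at the very first step: the map $G$ you describe is not the right one, and in fact cannot be made to work. In $\mO\GC_{n+1}$ as defined in the paper (and as used in the theorem), every vertex is at least $2$-valent, so there are no univalent targets at all; your $G$, which reads hairs off univalent sinks, is therefore identically zero and certainly not a quasi-isomorphism. Even if one enlarged the complex to admit univalent sinks, the correspondence ``univalent sink $\leftrightarrow$ hair'' is not degree-homogeneous: comparing the degree formulas $d=(n+1)-v(n+1)+ne$ on $\mO\GC_{n+1}$ and $d=n-vn-(1-n)e-s$ on $\HGC_{n,n}$, the shift it produces equals the loop order $e-v+1$ rather than a constant, so no sign or orientation convention can repair it. The correct combinatorial dictionary, which is the crux of the paper's construction, is different: hairs correspond to \emph{source} vertices (of arbitrary valence), and each edge of the hairy graph \emph{not} belonging to a chosen spanning forest corresponds to a length-two path through a bivalent target $\EdE$; it is exactly this subdivision that makes the degree shift constant. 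Concretely, $G(\Gamma)$ puts a hair on each source, contracts every $\EdE$ into a single undirected edge, and kills $\Gamma$ unless its number of bivalent targets equals the loop order plus the number of sources --- equivalently, unless the directed edges left after removing the $\EdE$'s form a spanning forest rooted at the sources and directed away from them. Dually, the predual map $\Phi$ sends a hairy graph to the \emph{sum over all spanning forests} of such oriented graphs, and it is on this predual, with edge-contraction differentials, that the paper verifies the chain-map identities (via cancellation lemmas for ``cycled'' and ``double-hair'' forests) before dualizing loop-order-wise at the end.

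The second genuine gap is the heart of the argument, which you leave as ``exhibit an explicit contracting homotopy --- locally modifying the directed structure at a canonically chosen target or edge.'' No such canonical local choice exists compatibly with the $\sym_v\times\sym_e$-symmetrization, and the paper does something quite different and more global. After a spectral sequence on the number of vertices (convergent because everything splits by loop order), only the \emph{edge} differential survives, and the complex decomposes into subcomplexes $\langle\mO\Gamma\rangle$ indexed by the underlying hairy shape $\Gamma$; the acyclicity statement is then proved by an induction along a fixed increasing chain of forests $F(a_1,\dots,a_i)$ in $\Gamma$, using auxiliary complexes $\langle\mO\Gamma^i\rangle$, $\langle\mS\Gamma^i\rangle$ built from an enlarged edge-type module $\{\Ed,\dE,\Ess,\ET\}$ and maps $f^i,g^i$ that freeze one forest edge at a time, each shown to be a quasi-isomorphism by a three-case analysis, terminating in a one-dimensional complex hit by $f\circ\Phi$. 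Your final step --- upgrading the $\delta_0$ quasi-isomorphism to the $\delta$ versus $\delta+\chi$ statement by filtering on the number of targets/hairs, with loop-order splitting ensuring convergence --- does match the paper's Corollary and is fine, but it rests entirely on the two pieces above, which are respectively incorrect and missing.
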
 
As a corollary, we get a relationship between the ribbon graph complex and the hairy graph complex.
\begin{cor} \label{cor:Mgn}
We have an explicit zig-zag of morphisms
$$(\HGC_{0}, \delta)\gets (\mO \GC_1, \delta_0)\to (\RGC[1],\delta),$$
where the left map is a quasi-isomorphism.
\end{cor}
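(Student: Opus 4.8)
The plan is to read off the two arrows of the zig-zag from results already available, exploiting that both maps share the common source $(\mO\GC_1,\delta_0)$. For the left arrow I would simply specialize Theorem~\ref{thm:main} to $n=0$. This yields a map of graded vector spaces $G\colon \mO\GC_1 \to \HGC_{0,0}$ whose induced morphism of complexes $G\colon(\mO\GC_1,\delta_0)\to(\HGC_{0,0},\delta)$ is a quasi-isomorphism, and $\HGC_{0,0}$ is precisely the complex abbreviated $\HGC_0$ in the statement. No further work is needed for the left arrow; its entire content is Theorem~\ref{thm:main} at the lowest value of the parameter.

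For the right arrow I would invoke the map $F$ recorded in \eqref{eq:Fintro}. As observed in the introduction, the explicit formula of Merkulov and Willwacher from \cite{MW} defines a morphism of complexes $F\colon(\mO\GC_1,\delta_0)\to(\RGC[1],\delta)$, where it is the source differential $\delta_0$ rather than the target differential that has been modified. This is exactly the right-hand arrow. Placing the two maps side by side, both emanating from $(\mO\GC_1,\delta_0)$, produces the claimed zig-zag, with the left map a quasi-isomorphism and the right map merely a morphism of complexes, which explains the asymmetry in the statement.

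Since both ingredients are already established, there is no genuine obstacle to overcome. The only point requiring care is to confirm that the differential $\delta_0$ serving as the source differential is literally the same in both inputs, namely the vertex-splitting differential that preserves the number of targets in Theorem~\ref{thm:main} and the non-standard differential on $\mO\GC_1$ appearing in \eqref{eq:Fintro}. This matching of conventions is what makes the two arrows share a source and assemble into a single zig-zag, and it is immediate from the definitions of $\delta_0$ used in each case.
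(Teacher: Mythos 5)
Your overall route coincides with the paper's: the left arrow is Theorem~\ref{thm:main} specialized to $n=0$, and the right arrow is the Merkulov--Willwacher map $F$; that part is fine. The problem is your treatment of the right arrow. Your entire argument for it is a citation of \eqref{eq:Fintro}, but \eqref{eq:Fintro} is not an established result that can simply be invoked: in the introduction it is only asserted (``a simple observation gives\dots''), and its actual justification is the content of Section~\ref{s:rib}. There the paper recalls that $F=F_2\circ F_1$ from \cite{MW2} is a chain map $(\mO\GC_1,\delta)\to(\RGC[1],\delta+\Delta_1)$, observes that $F$ sends a graph with $m$ target vertices to a sum of ribbon graphs with exactly $m$ boundaries, and then filters both sides by that number. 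The target-creating part of $\delta$ and the boundary-adding piece $\Delta_1$ are precisely the filtration-shifting parts, so the associated graded complexes are $(\mO\GC_1,\delta_0)$ and $(\RGC[1],\delta)$, and $\operatorname{gr}F$ --- which is the same map of vector spaces as $F$ --- is the desired morphism. This short filtration/associated-graded argument \emph{is} the mathematical content of the right arrow, and a proof of the corollary should contain it (or an equivalent verification), not a pointer back to the assertion being proved.

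A second, smaller error: you claim that the $\delta_0$ of Theorem~\ref{thm:main} and the $\delta_0$ of \eqref{eq:Fintro} are ``literally the same'' differential, both preserving the number of targets, and that this is immediate. It is not. As constructed (via $\Phi$, under which hairs correspond to sources), the differential $\delta_0$ in Theorem~\ref{thm:main} preserves the number of \emph{source} vertices, whereas the differential in \eqref{eq:Fintro} preserves the number of \emph{target} vertices. These are genuinely different differentials on $\mO\GC_1$; they are intertwined by the edge-reversing isomorphism $Inv:\mO\GC_1\to\mO\GC_1$, exactly as the remark in the introduction of the paper points out. To assemble a zig-zag with a single common source one must precompose one of the two arrows with $Inv$ (equivalently, restate Theorem~\ref{thm:main} for the target-preserving differential by transporting along $Inv$). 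This is a one-line fix, but it is a reconciliation your proof asserts away rather than performs.
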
 
A recent result by  M. Chan, S. Galatius and S. Payne \cite{CGP2} states that there exists an embedding 
$$H^k(\GC^{marked}_0, \delta)\to \prod_{g,n} H^{k-n+1}_c(\mathcal{M}_{g,n},\mathbb{Q}).$$
Here, $\GC^{marked}$ is a complex of hairy graphs where each hair is labeled by an integer. However, no explicit map is given. After symmetrizing  both sides and using Theorem \ref{thm:main}, we get that there exists an embedding 
$$H(\mO \GC_1,\delta_0) \to \prod_{g,n} \left( H_c(\mathcal{M}_{g,n},\mathbb{Q}) \otimes \sgn_n\right)^{\sym_n}\oplus \begin{cases} \mathbb{Q} & \text{for } k=1,5,9,\ldots\\
0 &\text{otherwise,}\end{cases}.$$
We conjecture that this embedding is given explicitly by the map $H(F)$. This conjecture is also given in \cite{MW}, and to support it, it is shown that $H(F)$ is nontrivial on all loop classes of $\mO \GC_1.$ 
 
Let $(\mS \G_n,d)$ be the graph complex consisting of directed graphs that contain at least one source vertex, with the edge contracting differential $d$.
 In \cite{MultiSourced}, the second author showed that the projection
$$
\left(\mS \G_n,d \right)\to \left(\mO \G_n,d\right)
$$
is a quasi-isomorphism.
\begin{rem}
The graph complex $(\mS \GC_n,\delta)$ with the vertex splitting differential $\delta$, is dual of the complex $(\mS \G_n,d)$ with the edge contraction differential $d$. The space $\mS \G_n$ is spanned by formal linear combinations of graphs, while its dual space $\mS \GC_n$ is spanned by formal series of the same graphs.

Each result regarding a graph complex $(\G,d)$ transfers to a dual result regarding $(\GC,\delta)$. e.g.\ the map $G:(\mO \GC_{n+1}, \delta_0) \to (\HGC_{n,n},\delta)$ has a dual map $\Phi:(\HG_{n,n},d) \to (\OG_{n+1},{d_0})$, which is also a quasi-isomorphism.
See Subsection \ref{ss:dual} for more details.
\end{rem}

In this paper, we show that this relation between oriented graphs and sourced graphs also holds true when we consider the \emph{fixed source graph complexes}, with differentials $d_0$ that preserve the number of source vertices. 

\begin{prop} \label{prop:sourced}
The projection
$$
(\mS \GC_n,\delta_0)\to(\mO \GC_n,\delta_0),
$$
is a quasi-isomorphism.
\end{prop}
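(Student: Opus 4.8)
The plan is to dualize and reduce to an acyclicity statement about a kernel complex. By the duality explained in the Remark, it is equivalent to prove that the projection $(\mS \G_n, d_0) \to (\mO \G_n, d_0)$ is a quasi-isomorphism, where $d_0$ is the part of the edge-contraction differential $d$ that preserves the number of source vertices. Both $\mS \G_n$ and $\mO \G_n$ split as direct sums over the number of sources, and $\pi$ (which sends a graph carrying a directed cycle to zero and fixes an acyclic graph) preserves the number of sources; hence $\pi$ is a $d_0$-chain map and fits into a short exact sequence
$$0 \to (K, d_0) \to (\mS \G_n, d_0) \xrightarrow{\pi} (\mO \G_n, d_0) \to 0,$$
where $K$ is spanned by those graphs that carry at least one source and at least one directed cycle. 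By the long exact sequence in homology, the Proposition is equivalent to the assertion that $(K, d_0)$ is acyclic.

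First I would recall that the corresponding statement for the full differential is already available: the quasi-isomorphism of \cite{MultiSourced} is, via the same long exact sequence, exactly the acyclicity of $(K, d)$. Writing $d = d_0 + d_{<}$, where $d_{<}$ collects the contractions that strictly lower the number of sources, one sees that $(K, d_0)$ is the associated graded complex of $(K, d)$ for the source filtration; note, however, that acyclicity of the total complex does not formally imply acyclicity of its associated graded, so the $d$-result cannot simply be quoted. Instead I would re-examine the contracting homotopy used in \cite{MultiSourced}. That homotopy is built from operations performed inside a canonically chosen directed cycle (equivalently, a strongly connected component) of the graph. The key observation is that any vertex lying on a directed cycle has an incoming edge and is therefore never a source of the ambient graph; contracting or expanding an edge interior to such a component merges (or separates) two non-source vertices and changes no other vertex's in-degree. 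Consequently this homotopy preserves the number of sources, commutes with the source grading, and therefore furnishes a contracting homotopy for $d_0$ on $K$ as well, proving $(K, d_0)$ acyclic.

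The main obstacle is to verify this source-preservation uniformly over all terms produced by the homotopy of \cite{MultiSourced}, including the degenerate cases --- for instance when contracting an edge collapses a $2$-cycle to a tadpole (a vanishing term), or when the canonical selection of the cycle must be made compatibly with the kernel condition of possessing a source. One must check that the homotopy never acts on an edge incident to a source vertex, which could otherwise alter the count, and that the resulting map is well-defined on all of $K$ and strictly respects the decomposition by number of sources. Once this compatibility is confirmed, the source-graded homotopy contracts $(K, d_0)$, and the Proposition follows.
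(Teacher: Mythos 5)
Your reduction is sound as far as it goes: dualizing, observing that the projection is compatible with the splitting by number of sources, and passing to the short exact sequence $0\to(K,d_0)\to(\mS\G_n,d_0)\to(\mO\G_n,d_0)\to 0$ correctly reduces the Proposition to acyclicity of the kernel $(K,d_0)$, and you are right to flag that acyclicity of $(K,d)$ does \emph{not} formally imply acyclicity of its associated graded $(K,d_0)$. The genuine gap is that the step which is supposed to do all the remaining work rests on a mischaracterization of \cite{MultiSourced}: that paper does not prove its theorem by exhibiting a contracting homotopy on the kernel ``built from operations performed inside a canonically chosen directed cycle.'' Its argument (like the one in the present paper, which imports its machinery) runs through the skeleton decomposition, a spectral sequence on the number of vertices that reduces everything to the edge differential, and an analysis of edge-type complexes attached to a fixed core graph. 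Such an argument produces no canonical homotopy on $K$ that one could inspect for source-preservation; extracting one by homological perturbation would be non-canonical, and verifying that it preserves the source count would then \emph{be} the proof, not a verification. So your proposal defers the entire mathematical content — the construction of a well-defined $H$ with $Hd+dH=\id$ on $K$, compatible with the symmetry (co)invariants, the tadpole/passing-vertex degenerations, and the source grading — to an object that has not been constructed.

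For contrast, the paper never isolates the kernel. It proves the dual statement (Theorem \ref{thm:main1}) simultaneously for the projection $p:(\mS\G_n,d_0)\to(\mO\G_n,d_0)$ and the map $\Phi:(\HG_n,d)\to(\mO\G_{n+1},d_0)$: after splitting by the number of sources and passing to skeleton complexes, a spectral sequence on the number of vertices leaves only the edge differential $d_{E0}$; the resulting complexes decompose into subcomplexes $\langle\mS\Gamma\rangle\twoheadrightarrow\langle\mO\Gamma\rangle$ indexed by a fixed shape $\Gamma$, and these are compared, one spanning-forest edge at a time, with auxiliary complexes $\langle\mS\Gamma^i\rangle$, $\langle\mO\Gamma^i\rangle$ built from the extra edge type $\ET$, via maps $g^i$, $f^i$ shown to be quasi-isomorphisms by a short case analysis; both towers collapse onto the same one-dimensional complex, and a commutative diagram finishes the proof. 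If you wish to salvage the kernel strategy, you must supply and verify the source-preserving homotopy yourself; expect that case analysis to be comparable in length to the paper's argument.
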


\begin{rem}
In \eqref{eq:Fintro}, we consider the differential $\delta_0$ that preserves the number of target vertices as opposed to source vertices. However, as there is an isomorphism $Inv:\mO\GC\to \mO\GC$ that inverts all the edges, thus mapping source vertices to target vertices and vice versa, we may shift freely between considering source vertices and target vertices. 
To keep notation consistent with what is previously established in \cite{MultiSourced}, we shall mainly consider the differential that preserves the number of source vertices, which we will also denote by $\delta_0$.
\end{rem}

Let $\GC_n^2$  $(\HGC_{n,n}^2)$ denote the version of Kontsevich's standard (hairy) graph complex that includes graphs with $2$-valent vertices.
The following result connects Kontsevich's graph complex and hairy graph complex.
\begin{thm}[\cite{TW1},\cite{TW2},\cite{Will},\cite{DGC2}]
There is a quasi-isomorphism
$$
\left(\GC_n^2,\delta\right)\rightarrow\left(\HGC^2_{n,n},\delta+\chi\right),
$$
by summing over all ways to attach a hair to each graph.
\end{thm}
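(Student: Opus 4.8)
The map in question, call it $F$, sends a graph $\Gamma\in\GC_n^2$ to $\sum_v \Gamma_v$, where $\Gamma_v$ is $\Gamma$ with a single hair attached at the vertex $v$ and the sum runs over all internal vertices. The first thing I would check is that $F$ is a chain map $(\GC_n^2,\delta)\to(\HGC_{n,n}^2,\delta+\chi)$. Splitting a vertex of $\Gamma_v$ either splits a vertex away from the hair, or splits the haired vertex $v$ itself and sends the hair to one of the two new vertices; summing over $v$ this reproduces exactly $F\delta$, so $\delta F=F\delta$ with the standard signs. It then remains to see that the two‑hair terms produced by $\chi F$ vanish: for a graph carrying hairs at two distinct vertices the two contributions obtained by adding the hairs in the two possible orders cancel by the symmetry convention for hairs in $\HGC_{n,n}$, and the same convention kills the terms with two hairs at one vertex. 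Hence $F$ is a chain map, and I expect the role of $\chi$ to lie not in the chain‑map property (it annihilates the image of $F$) but in making $F$ a quasi‑isomorphism.

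To prove that $F$ is a quasi‑isomorphism I would show that its mapping cone $C=\GC_n^2[1]\oplus\HGC_{n,n}^2$, with the cone differential assembled (up to sign) from $\left(\begin{smallmatrix}-\delta&0\\ F&\delta+\chi\end{smallmatrix}\right)$, is acyclic. The plan is to filter $C$ so that the hair‑producing operators $F$ and $\chi$ form the leading part of the differential and the vertex‑splitting $\delta$ is the higher‑order part; concretely one filters by the number of internal vertices, which $\delta$ strictly raises while both $F$ and $\chi$ preserve it. On the associated graded $E_0$ the differential is therefore $\left(\begin{smallmatrix}0&0\\ F&\chi\end{smallmatrix}\right)$, and this decomposes as a direct sum over the underlying internal graph (the ``core'') $\gamma$, since adding hairs never changes $\gamma$.

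The heart of the argument is then a per‑core computation. For a fixed core $\gamma$ with internal vertex set $V$, the hair decorations are spanned by the symmetric (or, in the odd case, the exterior) algebra on $\mathbb{Q}^V$, one generator $e_v$ per vertex, and the leading differential is multiplication (resp. wedging) by $s=\sum_{v\in V}e_v$, taken in $\mathrm{Aut}(\gamma)$‑(co)invariants and with the valence constraints built in. In either case this is a Koszul‑type complex: since $s\ne 0$, multiplication/wedging by $s$ is acyclic, and passing to (twisted) invariants over the finite group $\mathrm{Aut}(\gamma)$ preserves acyclicity in characteristic zero. For cores whose bare hairless version is itself admissible, the cone term adds back precisely the hair‑degree‑$0$ generator and turns the constrained complex into the full augmented Koszul complex, again acyclic; for cores with an internal vertex forced to carry hairs the constrained complex is (the image of) an ideal such as $(e_{v_0})$ and is acyclic by the long exact sequence comparing it with the full complex and its quotient. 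A short list of degenerate cores (e.g. a single internal vertex) has to be inspected separately. Consequently $E_1=0$.

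The main obstacle is convergence, not the per‑core acyclicity. Because $\GC_n^2$ and $\HGC_{n,n}^2$ allow bivalent vertices, the number of internal vertices is \emph{unbounded} even for fixed loop order and fixed number of hairs, since one may insert arbitrarily long chains of bivalent vertices; hence the filtration is not bounded and $E_1=0$ does not formally force $C$ to be acyclic. I would handle this exactly as in \cite{TW1,TW2,Will,DGC2}: first isolate the bivalent part (the chains of bivalent vertices are responsible for the extra summand appearing in the cohomology) and then invoke a completeness/exhaustiveness argument guaranteeing convergence of the spectral sequence. Once convergence is secured, $E_1=0$ yields that $C$ is acyclic, i.e. that $F$ is a quasi‑isomorphism.
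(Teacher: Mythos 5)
You should first be aware that the paper contains no proof of this statement at all: it is imported wholesale from \cite{TW1}, \cite{TW2}, \cite{Will}, \cite{DGC2}, so the only comparison available is with those proofs and with the structurally identical arguments the paper runs for its own results (the proof of Theorem \ref{thm:main1} in Section \ref{s:qi} uses exactly the cone--filtration--per-core scheme you propose, with convergence justified by splitting over the loop order). Within that frame, your outline is the standard one and much of it is sound: the chain-map verification is correct, since under the conventions used here hairs carry $\sgn_s$ for every $n$, so the two-hair terms of $\chi F$ cancel in pairs and two hairs on one vertex vanish, giving $\chi F=0$; and the reduction to a per-core computation plus exactness of taking $\mathrm{Aut}(\gamma)$-invariants in characteristic zero is how the cited proofs go.

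There are, however, two genuine defects. First, the assertion that ``in either case this is a Koszul-type complex: since $s\ne 0$, multiplication/wedging by $s$ is acyclic'' is false for the symmetric algebra: multiplication by $s=\sum_{v}e_v$ on $S(\mathbb{Q}^V)$ is injective with cohomology $S(\mathbb{Q}^V)/(s)\cong S(\mathbb{Q}^V/\mathbb{Q}s)\ne 0$, which is large. In fact the symmetric alternative cannot occur: if hairs commuted, $\chi$ would not even square to zero and $\delta+\chi$ would not be a differential; this is precisely why $\HGC_{n,n}$ is defined with a $\sgn_s$-twist for \emph{both} parities of $n$, so the decoration space per core is always the exterior algebra $\Lambda(\mathbb{Q}^V)$. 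As written, your heart-of-the-argument step would fail in whichever parity you assigned the symmetric algebra to, so this hedge must be removed, not kept. Second, your convergence discussion is misdirected: there is no ``extra summand'' in this statement (both sides are the $2$-valent versions and the quasi-isomorphism is clean), and no isolation of bivalent chains is needed. The standard remedy, used verbatim in the paper's own proofs, is to split both complexes as products over the loop order $b=e-v$, which $\delta$, $\chi$ and the one-hair map all preserve, and then to note that in fixed loop order the degree formula $d=n-vn-(1-n)e-s$ determines $v$ on the source side and $v+s$ on the target side as functions of the cohomological degree (inserting a chain of bivalent vertices lowers the degree by its length), so that $v$ is bounded in each degree. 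Hence the vertex filtration is finite degreewise, the spectral sequence converges, and your $E_1=0$ computation --- corrected to the exterior case, with the degenerate cores (single vertex, single edge with a hair at each end) checked to vanish by the hair antisymmetry --- completes the proof.
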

Together with Theorem \ref{thm:main}, we arrive to the following corollary.
\begin{cor} \label{cor:Mgn}
There is a zig-zag of explicit quasi-isomorphisms
$$
\left(\GC_n^2,\delta\right)
\rightarrow\left(\HGC^2_{n,n},\delta+\chi\right)
\hookleftarrow\left(\HGC_{n,n},\delta+\chi\right)
\leftarrow\left(\OGC_{n+1},\delta\right)
\hookrightarrow(\SGC_{n+1},\delta).
$$
\end{cor}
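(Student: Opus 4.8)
The plan is to verify that each of the four arrows in the displayed zig-zag is a quasi-isomorphism and then to read off the statement by composition. Three of the four arrows are already in hand. The first arrow $(\GC_n^2,\delta)\to(\HGC^2_{n,n},\delta+\chi)$ is precisely the theorem of \cite{TW1}, \cite{TW2}, \cite{Will}, \cite{DGC2} quoted just above. The third arrow $(\OGC_{n+1},\delta)\to(\HGC_{n,n},\delta+\chi)$ is the second quasi-isomorphism supplied by Theorem \ref{thm:main}. The last arrow $(\OGC_{n+1},\delta)\hookrightarrow(\SGC_{n+1},\delta)$ is obtained by dualizing the projection $(\mS\G_{n+1},d)\to(\mO\G_{n+1},d)$ of \cite{MultiSourced}: in each fixed loop order the complexes split as finite-dimensional pieces in each cohomological degree (fixing the loop order and degree bounds the number of vertices), so $\mathbb{Q}$-linear dualization turns this surjective quasi-isomorphism into the injective quasi-isomorphism we want, carrying the edge-contracting differential $d$ to the standard vertex-splitting differential $\delta$ on both sides.

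The only arrow requiring genuine work is the inclusion $i:(\HGC_{n,n},\delta+\chi)\hookrightarrow(\HGC^2_{n,n},\delta+\chi)$ of the subcomplex of graphs all of whose internal vertices are at least trivalent. To treat it I would first strip off the hair-creating part of the differential. Since $\chi$ raises the number of hairs by one while $\delta$ preserves it, filtering by the number of hairs makes $i$ a filtered map whose associated graded is the inclusion $(\HGC_{n,n},\delta)\hookrightarrow(\HGC^2_{n,n},\delta)$ with the pure vertex-splitting differential. In a fixed loop order and fixed cohomological degree only finitely many hair numbers occur, so the hair filtration is finite in every homogeneous component and the two associated spectral sequences converge; it therefore suffices to prove that the reduced inclusion is a quasi-isomorphism and then invoke the comparison theorem.

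For the reduced statement I would run the standard analysis of bivalent vertices. A maximal chain of bivalent internal vertices is a subdivided internal edge or a subdivided hair, and the vertex-splitting differential identifies all subdivisions of a given edge or leg; the usual contraction argument shows that the subcomplex generated by such bivalent strings is acyclic, so bivalent vertices contribute nothing to cohomology, with the single exception of connected graphs consisting of one bivalent polygon and no higher-valent vertex. In the non-hairy setting these hairless loops produce exactly the loop classes distinguishing $H(\GC^{\geq 2})$ from $H(\GC^{\geq 3})$; but they cannot occur in $\HGC_{n,n}$ since every hairy graph carries at least one hair. Hence no extra classes survive and $(\HGC_{n,n},\delta)\hookrightarrow(\HGC^2_{n,n},\delta)$ is a quasi-isomorphism.

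I would finally assemble the four quasi-isomorphisms into the displayed zig-zag. The main obstacle is the second arrow: making the hair-filtration spectral sequences converge and carrying out the bivalent-vertex contraction cleanly in the hairy context—in particular, correctly accounting for subdivided hairs as well as subdivided internal edges, and confirming that the excluded hairless polygons are the only obstruction to acyclicity. The other three arrows are immediate, so once $i$ is shown to be a quasi-isomorphism the corollary follows at once.
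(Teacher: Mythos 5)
Your assembly of the four arrows is exactly how the paper obtains this corollary: the paper gives no separate argument, it simply concatenates the quoted theorem of \cite{TW1}, \cite{TW2}, \cite{Will}, \cite{DGC2} (first arrow), Theorem \ref{thm:main} (third arrow), and the dual of the projection $(\mS\G_{n+1},d)\twoheadrightarrow(\mO\G_{n+1},d)$ from \cite{MultiSourced} (fourth arrow), treating the middle inclusion as part of the cited literature on hairy graph complexes. Your justifications of the first, third and fourth arrows agree with the paper's, including the observation that fixing the loop order makes each complex finite dimensional in every degree, so that dualization turns the surjective quasi-isomorphism of \cite{MultiSourced} into the injective one $(\OGC_{n+1},\delta)\hookrightarrow(\SGC_{n+1},\delta)$.

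The one place you go beyond the paper is the second arrow, where you prove rather than cite that $(\HGC_{n,n},\delta+\chi)\hookrightarrow(\HGC^2_{n,n},\delta+\chi)$ is a quasi-isomorphism. Your filtration-by-hairs reduction is sound: since $\delta$ preserves and $\chi$ raises the hair number, and since for fixed loop order $b$ and fixed degree $d$ one has $e+s=n+nb-d$, the filtration is finite in each homogeneous piece and the comparison of spectral sequences applies. However, your bivalent-vertex analysis has an incomplete case list. In the hairy $\geq 2$-valent complex, the connected graphs possessing no $\geq 3$-valent core vertex are not polygons (polygons are hairless, hence absent) but \emph{lines}: strings of bivalent vertices carrying one hair at each end. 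These are neither subdivided internal edges nor subdivided hairs of any core graph, so the contraction argument does not apply to them; they are the hairy analogue of the exceptional polygons and must be checked by hand. The check does succeed: writing $L_k$ for the line with $k$ vertices, the flip symmetry (which transposes the two hairs) kills $L_k$ for $k\equiv 1,2 \pmod 4$, and a short sign computation shows the edge-contraction differential (dual to $\delta$) maps $L_{4m}$ onto $L_{4m-1}$ nontrivially, so the line subquotient is acyclic --- in contrast with the non-hairy polygon complex, where the surviving polygons sit four apart and the differential between them vanishes, producing the loop classes. With this supplement your argument for the second arrow is complete, and the corollary follows as you state.
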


The previous corollary gives a fourth proof that the cohomology of Kontsevich's graph complex is equal to the cohomology of the oriented graph complex, together with  \cite{MW}, \cite{oriented}, and \cite{Multi}. The proof from \cite{Multi} gave an explicit map
$$
\left(\G_n,d \right)
\rightarrow\left(\OG_{n+1}^\varnothing, d\right),
$$
where the superscript $\varnothing$, means that the loop graphs are omitted. This proof gives explicit maps too, and this time they are also naturally defined for loops.

\subsection*{Structure of the paper}

In Section \ref{s:def} we recall the required definitions and some results. Section \ref{s:map} introduces the map between the oriented and the hairy graph complexes, showing that it is indeed a map of complexes. Section \ref{s:qi} contains our main results about quasi-isomorphisms. In Section \ref{s:rib} we recall the definition of Ribbon graphs and the map $F$ of Merkulov and Willwacher from \cite{MW}, giving sense to our motivation.

\subsection*{Acknowledgements}
We are grateful to Sergei Merkulov and Alexey Kalugin for many useful discussions and comments.
\section{Required definitions and results}
\label{s:def}

In this section we recall basic notation and several results shown in the literature that will be used in the paper.

\subsection{General notation}

We work over a field $\K$ of characteristic zero. All vector spaces and differential graded vector spaces are assumed to be $\K$-vector spaces. 

Graph complexes as vector spaces are generally defined by the graphs that span them. When we say \emph{a single term graph} in a graph complex, we mean the base graph, while any linear combination (or a series) of graphs will be called \emph{an element} of the graph complex.


\subsection{Directed, oriented and sourced graph complexes}
The directed, oriented and sourced graph complexes $\mD\GC_{n}$, $\mO\GC_{n}$ and $\mS\GC_n$ are defined in \cite{MultiSourced}. In this paper, we will only consider single directional complexes. Accordingly, let us recall a simplified definition.

Consider the set of directed connected graphs $\bar\mV_v\bar\mE_e\grac^{2}$ with $v>0$ distinguishable vertices and $e\geq 0$ distinguishable directed edges, all vertices being at least $2$-valent, and without tadpoles (edges that start and end at the same vertex). We also ban passing vertices, i.e.\ 2-valent vertices with one incoming and one outgoing edge. This condition will not change the homology, as shown in \cite[Subsection 3.2]{MultiSourced}.

Let $\bar\mV_v\bar\mE_e\mO\grac^{2}\subset\bar\mV_v\bar\mE_e\grac^{2}$ be the subset of all graphs without closed paths along the directed edges. For $s\geq 0$, let $\bar\mV_v\bar\mE_e\mS_s\grac^{2}\subset\bar\mV_v\bar\mE_e\grac^{2}$ be the subset of graphs that have exactly $s$ sources, i.e.\ vertices without incoming edges.

For $n\in\Z$, let the degree of an element of $\bar\mV_v\bar\mE_e\grac^{\geq 2}$ be $d=n-vn-(1-n)e$. Let
\begin{equation}
\bar\mV_v\bar\mE_e\mD\G_n:=\langle\bar\mV_v\bar\mE_e\grac^{2}\rangle[n-vn-(1-n)e],
\end{equation}
\begin{equation}
\bar\mV_v\bar\mE_e\mO\G_n:=\langle\bar\mV_v\bar\mE_e\mO\grac^{2}\rangle[n-vn-(1-n)e],
\end{equation}
\begin{equation}
\bar\mV_v\bar\mE_e\mS_s\G_n:=\langle\bar\mV_v\bar\mE_e\mS_s\grac^{2}\rangle[n-vn-(1-n)e],
\end{equation}
be the vector spaces of formal linear combinations of some elements of $\bar\mV_v\bar\mE_e\grac^{2}$ with coefficients in $\K$. They are graded vector spaces with non-zero terms only in degree $d=n-vn-(1-n)e$.

There are natural right actions of the group $\sym_v\times\sym_e$ on $\bar\mV_v\bar\mE_e\grac^{2}$, $\bar\mV_v\bar\mE_e\mO\grac^{2}$ and $\bar\mV_v\bar\mE_e\mS_s\grac^{2}$, where $S_v$ permutes vertices and $S_e$ permutes edges.
Let $\sgn_v$ and $\sgn_e$ be one-dimensional representations of $S_v$, respectively $S_e$, where the odd permutation reverses the sign. They can be considered as representations of the product $\sym_v\times\sym_e$. Let us consider the spaces of invariants:

\begin{equation}
\mV_v\mE_e\mD\G_{n}:=\left\{
\begin{array}{ll}
\left(\bar\mV_v\bar\mE_e\mD\G_n\otimes\sgn_e\right)^{\sym_v\times\sym_e}
\qquad&\text{for $n$ even,}\\
\left(\bar\mV_v\bar\mE_e\mD\G_n\otimes\sgn_v\right)^{\sym_v\times\sym_e}
\qquad&\text{for $n$ odd,}
\end{array}
\right.
\end{equation}
\begin{equation}
\mV_v\mE_e\mO\G_{n}:=\left\{
\begin{array}{ll}
\left(\bar\mV_v\bar\mE_e\mO\G_n\otimes\sgn_e\right)^{\sym_v\times\sym_e}
\qquad&\text{for $n$ even,}\\
\left(\bar\mV_v\bar\mE_e\mO\G_n\otimes\sgn_v\right)^{\sym_v\times\sym_e}
\qquad&\text{for $n$ odd,}
\end{array}
\right.
\end{equation}
\begin{equation}
\mV_v\mE_e\mS_s\G_{n}:=\left\{
\begin{array}{ll}
\left(\bar\mV_v\bar\mE_e\mS_s\G_n\otimes\sgn_e\right)^{\sym_v\times\sym_e}
\qquad&\text{for $n$ even,}\\
\left(\bar\mV_v\bar\mE_e\mS_s\G_n\otimes\sgn_v\right)^{\sym_v\times\sym_e}
\qquad&\text{for $n$ odd.}
\end{array}
\right.
\end{equation}

As the group is finite, the space of invariants may be replaced by the space of coinvariants. The underlying vector space of 
the \emph{directed graph complex} is given by
\begin{equation}
\mD\G_{n}:=\bigoplus_{v\geq 1,e\geq 0}\mV_v\mE_e\mD\G_{n}.
\end{equation}
The underlying vector space of 
the \emph{oriented graph complex} is given by
\begin{equation}
\mO\G_{n}:=\bigoplus_{v\geq 1,e\geq 0}\mV_v\mE_e\mO\G_{n}.
\end{equation}
The underlying vector space of 
the \emph{sourced graph complex} with $s$ sources is given by
\begin{equation}
\mS_s\G_{n}:=\bigoplus_{v\geq 1,e\geq 0}\mV_v\mE_e\mS_s\G_{n},
\end{equation}
and the full sourced graph complex is given by
\begin{equation}
\mS\G_n:=\bigoplus_{s\geq 1}\mS_s\G_{n}.
\end{equation}

Dual spaces of those defined above are spanned by the same graphs, but infinite sums are allowed. Since $\mV_v\mE_e\mD\G_{n}$ are finitely dimensional, this does not make difference. But duals of total complexes are defined as:
\begin{equation}
\mD\GC_{n}:=\prod_{v\geq 1,e\geq 0}\mV_v\mE_e\mD\G_{n},
\end{equation}
\begin{equation}
\mO\GC_{n}:=\prod_{v\geq 1,e\geq 0}\mV_v\mE_e\mO\G_{n},
\end{equation}
\begin{equation}
\mS_s\GC_{n}:=\prod_{v\geq 1,e\geq 0}\mV_v\mE_e\mS_s\G_{n},
\end{equation}
\begin{equation}
\mS\GC_n:=\prod_{s\geq 1}\mS_s\G_{n}.
\end{equation}

Note that every oriented graph is sourced and both of them are directed, so there are inclusions and a projections
\begin{equation}
\mO\GC_{n}\hookrightarrow\mS\GC_{n}\hookrightarrow\mD\GC_{n}, \quad
\mD\G_{n}\twoheadrightarrow\mS\G_{n}\twoheadrightarrow\mO\G_{n}.
\end{equation}

\subsection{Hairy graph complex}
The hairy graph complexe $\HGC_{m,n}$ is in general defined e.g.\ in \cite{DGC2}.
In this paper we are interested in the case when $m=n$, that is $\HGC_{m,n}$. For simplicity, we will use the shorter notation $\HGC_n:=\HGC_{n,n}$.

Let us quickly recall the definition, similar to the one of oriented and sourced graph complexes.
Consider the set of directed connected graphs $\bar\mV_v\bar\mE_e\bar\mH_s\grac^{3}$ with $v>0$ distinguishable vertices, $e\geq 0$ distinguishable directed edges and $s\geq 0$ distinguishable hairs attached to some vertices, all vertices being at least $3$-valent, and without tadpoles (edges that start and end at the same vertex). In hairy graphs, the valence also includes the attached hairs, i.e.\ the valence of a vertex is the number of edges and hairs attached to it.

For $n\in\Z$, let the degree of an element of $\bar\mV_v\bar\mE_e\bar\mH_s\grac^{3}$ be $d=n-vn-(1-n)e-s$. Let
\begin{equation}
\bar\mV_v\bar\mE_e\bar\mH_s\G_n:=\langle\bar\mV_v\bar\mE_e\bar\mH_s\grac^{3}\rangle[n-vn-(1-n)e-s]
\end{equation}
be the vector space of formal series of $\bar\mV_v\bar\mE_e\bar\mH_s\grac^{3}$ with coefficients in $\K$. It is a graded vector space with a non-zero term only in degree $d=n-vn-(1-n)e-s$.

There is a natural right action of the group $\sym_v\times \sym_s\times \left(\sym_e\ltimes \sym_2^{\times e}\right)$ on $\bar\mV_v\bar\mE_e\bar\mH_s\grac^{3}$, where $S_v$ permutes vertices, $S_s$ permutes hairs, $S_e$ permutes edges and $S_2^{\times e}$ changes the direction of edges.
Let $\sgn_v$, $\sgn_s$, $\sgn_e$ and $\sgn_2$ be one-dimensional representations of $S_v$, respectively $S_s$, respectively $S_e$, respectively $S_2$, where the odd permutation reverses the sign. They can be considered as representations of the whole product $\sym_v\times \sym_s\times \left(\sym_e\ltimes \sym_2^{\times e}\right)$. Let us consider the space of invariants:
\begin{equation}
\mV_v\mE_e\mH_s\G_{n}:=\left\{
\begin{array}{ll}
\left(\bar\mV_v\bar\mE_e\bar\mH_s\G_n\otimes\sgn_e\otimes\sgn_s\right)^{\sym_v\times \sym_s\times \left(\sym_e\ltimes \sym_2^{\times e}\right)}
\qquad&\text{for $n$ even,}\\
\left(\bar\mV_v\bar\mE_e\bar\mH_s\G_n\otimes\sgn_v\otimes\sgn_s\otimes\sgn_2^{\otimes e}\right)^{\sym_v\times \sym_s\times \left(\sym_e\ltimes \sym_2^{\times e}\right)}
\qquad&\text{for $n$ odd.}
\end{array}
\right.
\end{equation}

Again, because the group is finite, the space of invariants may be replaced by the space of coinvariants.
The \emph{hairy graph complex} with $s$ hairs is
\begin{equation}
\mH_s\G_{n}:=\bigoplus_{v\geq 1,e\geq 0}\mV_v\mE_e\mH_s\G_{n},
\end{equation}
and the general one is
\begin{equation}
\mH\G_n:=\bigoplus_{s\geq 1}\mH_s\G_{n}.
\end{equation}
Duals are:
\begin{equation}
\mH_s\GC_{n}:=\prod_{v\geq 1,e\geq 0}\mV_v\mE_e\mH_s\G_{n},
\end{equation}
\begin{equation}
\mH\GC_n:=\prod_{s\geq 1}\mH_s\G_{n}.
\end{equation}

\subsection{The differential}

The standard differential acts by edge contraction:
\begin{equation}
d(\Gamma)=\sum_{a\in E(\Gamma)}\Gamma/a
\end{equation}
where $\Gamma$ is a graph from $\bar\mV_v\bar\mE_e\grac^{2}$ or $\bar\mV_v\bar\mE_e\bar\mH_s\grac^{3}$, $E(\Gamma)$ is its set of edges and $\Gamma/a$ is the graph from $\bar\mV_{v-1}\bar\mE_{e-1}\grac^{2}$ or $\bar\mV_{v-1}\bar\mE_{e-1}\bar\mH_s\grac^{3}$ respectively, produced from $\Gamma$ by contracting edge $a$ and merging its end vertices. If a tadpole or a passing vertex is produced, we consider the result to be zero. Also, for oriented and sourced versions, if a closed path is formed or the last source is removed respectively, we consider the result to be zero. The precise signs and verification that the map can be extended to space of invariants and graph complexes $\mO\G_{n}$ and $\mS\G_{n}$ can be found in \cite[Subsection 2.9]{MultiSourced}. In \cite[Subsection 2.10]{MultiSourced} it is shown that $\delta$ is a differential. Exactly the same arguments hold for hairy graph complex $\mH\G_{n}$.

The differential can not produce closed path, so the projection
\begin{equation}
p:\left(\mS\G_n,d\right)\twoheadrightarrow\left(\mO\G_n,d\right)
\end{equation}
is well defined.

On $\mH\G_{n}$ we define another differential $h$, that deletes a hair, summed over all hairs:
\begin{equation}
h(\Gamma)=\sum_{i\in H(\Gamma)}\Gamma/i
\end{equation}
where $\Gamma$ is a graph from $\bar\mV_v\bar\mE_e\bar\mH_s\grac^{3}$, $H(\Gamma)$ is its set of hairs and $\Gamma/i$ is the graph from $\bar\mV_{v}\bar\mE_{e}\bar\mH_{s-1}\grac^{3}$, produced from $\Gamma$ by deleting hair $i$. If it was the last hair or a 2-valent vertex is formed, we consider the result to be zero. We can easily extend it to $\mH\G_{n}$ and see that $h^2=0$, so it is a differential. Also, it easily follows that $hd+hd=0$, so $d+h$ is also a differential.

\subsection{Loop order and the dual differentials}
\label{ss:dual}
So far, we have defined complexes $\left(\mO\G_{n},d\right)$, $\left(\mS\G_{n},d\right)$, $\left(\mH\G_{n},d\right)$, $\left(\mH\G_{n},h\right)$ and $\left(\mH\G_{n},d+h\right)$. 
For a (hairy) graph $\Gamma$, with $e$ edges and $v$ vertices, let the \emph{loop order} of $\Gamma$ be given by $e-v$. The differentials  $d$ and $h$ preserve the loop order, hence all complexes above admit sub-complexes
$$\mB_b\mO \G_n:= \bigoplus_{e-v= b} \mV_v\mE_e \mO \G_n,$$
$$\mB_b\mS \G_n:= \bigoplus_{e-v= b} \mV_v\mE_e \mS \G_n,$$
$$\mB_b\HG_n:= \bigoplus_{e-v= b} \bigoplus_{s\ge 1} \mV_v\mE_e \mH_s G_n,$$
for each $b\in \mathbb{Z}.$ It is clear that
$$
\mO \G_{n} = \bigoplus_{b\ge 0} \mB_b\mO \G_{n},
\quad \mS \G_{n} = \bigoplus_{b\ge 0} \mB_b\mS \G_{n},
\quad \mH \G_n = \bigoplus_{b\ge 0} \mB_b\mH \G_n.
$$
Furthermore, complexes $\mB_b\mO \G_n, \mB_b\mS \G_n$, $ \mB_b\HG_n$ are finite dimensional in each homological degree $k$. Hence there are canonical isomorphisms of vector spaces to their duals
$$
\mB_b\mO \G_n \to \hom(\mB_b\mO \G_n,\Bbbk)=:\mB_b\mO \GC_n,
$$
$$
\mB_b \mS \G_n \to \hom(\mB_b\mS \G_n,\Bbbk)=:\mB_b \mS \GC_n,
$$
$$
\mB_b\HG_n \to \hom(\mB_b\HG_n,\Bbbk)=:\mB_b\HGC_n,
$$
identifying a single term graph $\Gamma$ to the linear map that maps $\Gamma$ to $1$ and all other graphs to $0$.

Any differential $\Delta$ on $\G= \mO \G_n, \mS\G_n$, or $\HG$, that preserves the loop order, is paired with a dual differential $\nabla\leftrightarrow \Delta$ on the dual space $\GC= \mO \GC_n, \mS\GC_n$, or $\HGC$ such that
$$(\GC,\nabla) \cong \prod_{b} \left( \hom(\mB_b G,\Bbbk),\Delta^* \right).$$
We denote the dual differentials of $d$ and $h$ by
\begin{equation}
\delta \leftrightarrow d, \quad
\chi \leftrightarrow h.
\end{equation}

The differential $\delta$ splits a vertex in all possible ways, while $\chi$ adds a hair in all possible ways. The vertex splitting differential (and adding a hair differential) are often defined as the standard differential(s). For hairy graph complexes both $\delta$ and $\chi$ are defined in \cite{DGC2}.

The dual of the projection $p:\left(\mS\G_n,d\right)\twoheadrightarrow\left(\mO\G_n,d\right)$ is the inclusion
\begin{equation}
\iota:\left(\mO\GC_n,\delta\right)\hookrightarrow\left(\mS\GC_n,\delta\right).
\end{equation}

It is a well known result from homological algebra that the cohomology of a dual complex is dual to the homology of the complex. For this reason, we are free to study any of them, in order to obtain the results regarding (co)homology.

\subsection{Skeleton version of directed graph complex}
Instead of directed, oriented and sourced graph complexes $\left(\mD\G_{n},d\right)$, $\left(\mO\G_{n},d\right)$ and $\left(\mS\G_{n},d\right)$, it may sometimes be useful to consider their isomorphic skeleton versions: $\left(\mD^{sk}\G_{n},d\right)$, $\left(\mO^{sk}\G_{n},d\right)$ and $\left(\mS^{sk}\G_{n},d\right)$.

They are defined using theory from \cite[Section 2]{MultiSourced} as follows:
Consider the set of directed connected graphs $\bar\mV_v\bar\mE_e\grac^{3}$ with $v>0$ distinguishable vertices and $e\geq 0$ distinguishable directed edges, all vertices being at least $3$-valent, without tadpoles. In this context,
those graphs are called \emph{core graphs} because we are going to attach edge types to them.
Let $\bar\mV_v\bar\mE_e\Grac^{3}:=\langle\bar\mV_v\bar\mE_e\grac^{3}\rangle$ be the generated vector space.
To each edge of a core graph $\Gamma\in\bar\mV_v\bar\mE_e\grac^{3}$, we attach an element of graded $\langle \sym_2\rangle$ module $\Sigma$ spanned by $\{\Ed[n-1],\dE[n-1],\Ess[n-2]\}$ with $\sym_2$ action
\begin{equation}
\Ed \leftrightarrow\dE,\quad\Ess\mapsto -(-1)^{n}\Ess
\end{equation}
to get $\bar\mV_v\bar\mE_e\Grac^{3}\otimes\Sigma^{\otimes e}$. We say that the type of an edge is its attached element of $\Sigma$.
Let
\begin{equation}
\bar\mV_v\bar\mE_e^{\Sigma}\Grac^{3}:=
\bar\mV_v\bar\mE_e\Grac^{3}\otimes_{S_2^{\times e}}\Sigma^{\otimes e}=
\left(\bar\mV_v\bar\mE_e\Grac^{3}\otimes\Sigma^{\otimes e}\right)_{S_2^{\times e}}
\end{equation}
where $S_2^{\otimes e}$ acts by reversing edges in $\bar\mV_v\bar\mE_e\grac^{3}$, as well as on the element from $\Sigma$ attached to the corresponding edge.

Similarly as before, there are natural right actions of the groups $S_v$ and $S_e$ on $\bar\mV_v\bar\mE_e^{\sym}\Grac^{3}$, where $S_v$ permutes vertices and $S_e$ permutes edges. Let $\sgn_v$ be one-dimensional representations of $S_v$, where the odd permutation reverses the sign. The sign of the action of $S_e$ depends on the types of the permuted edges, such that switching edges with odd degree types change the sign, c.f.\ \cite[Subsection 2.7]{MultiSourced}. Then let
\begin{equation}
\mV_v\mE_e\mD^{sk}\G_{n}:=\left\{
\begin{array}{ll}
\left(\bar\mV_v\bar\mE_e^{\sym}\Grac^{3}\right)^{\sym_v\times\sym_e}[n-vn]
\qquad&\text{for $n$ even,}\\
\left(\bar\mV_v\bar\mE_e^{\sym}\Grac^{3}\otimes\sgn_v\right)^{\sym_v\times\sym_e}[n-vn]
\qquad&\text{for $n$ odd.}
\end{array}
\right.
\end{equation}
This means that for even $n$ switching edges of type $\Ed$ or $\dE$ change the sign, while for odd $n$ switching vertices and edges of type $\Ess$ change the sign. Note the degree shift $[n-vn]$ that comes from the degrees of vertices, while the degree of edges of particular type is already included in $\Sigma$.

The \emph{skeleton version of directed graph complex} is
\begin{equation}
\mD^{sk}\G_{n}:=\bigoplus_{v\geq 1,e\geq 0}\mV_v\mE_e\mD^{sk}\G_{n}.
\end{equation}

On graded $\langle\sym_2\rangle$ module $\Sigma$ there is also a differential defined with
\begin{equation}
\Ess\mapsto\Ed-(-1)^n\dE
\end{equation}
that induces the \emph{edge differential} $d_E$ on $\mD^{sk}\G_{n}$, c.f.\ \cite[Subsection 2.6]{MultiSourced}. The core differential $d_C$ comes from contracting edges of type $\Ed$ and $\dE$, c.f.\ \cite[Subsection 2.10]{MultiSourced}. This enables us to define the \emph{combined differential}
\begin{equation}
d:=d_C+(-1)^{n\deg}d_E
\end{equation}
on $\mD^{sk}\G_{n}$ as in \cite[Subsection 2.11]{MultiSourced}.

This skeleton version $\mD^{sk}\G_{n}$ is defined to be isomorphic to the original version $\mD\G_{n}$. In short, 3-valent vertices and 2-valent sources in a single term graph in $\mD\G_{n}$ are called \emph{skeleton vertices}. Strings of edges and vertices between two skeleton vertices that have to be in the set $\{\Ed,\dE,\EdE\}$, are called \emph{skeleton edges}. A corresponding graph in $\mD^{sk}\G_{n}$ is the one with skeleton vertices as vertices and skeleton edges as edges, where $\EdE$ is mapped to $\Ess$. One can check that the degrees and parities are correctly defined, and obtain the following result. C.f.\ \cite[Subsection 3.4]{MultiSourced}. But note that we have more skeleton vertices here because 2-valent sources are also considered skeleton vertices, and therefore there are less skeleton edges.

\begin{prop}
There is an isomorphism of complexes
\begin{equation}
\kappa:\left(\mD\G_n,d\right)\rightarrow\left(\mD^{sk}\G_n,d\right).
\end{equation}
\end{prop}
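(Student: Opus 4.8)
The plan is to define $\kappa$ explicitly on single term graphs, verify it is a degree- and sign-preserving bijection, and then check that it intertwines the two differentials; throughout, the argument parallels \cite[Subsection 3.4]{MultiSourced}, the only change being that $2$-valent sources are now promoted to skeleton vertices.

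First I would analyse the local structure of a single term graph $\Gamma\in\bar\mV_v\bar\mE_e\grac^{2}$. Since tadpoles and passing vertices are excluded, every $2$-valent vertex is either a source or a sink. Declaring the skeleton vertices to be those of valence $\ge 3$ together with the $2$-valent sources, the non-skeleton vertices are precisely the $2$-valent sinks. Two sinks can never be joined by an edge, since that edge would have to be incoming at both endpoints, so each sink lies on a string of the form $\EdE$ whose two outer vertices are skeleton vertices. Consequently every maximal string between skeleton vertices is one of $\Ed$, $\dE$ or $\EdE$, and I define $\kappa(\Gamma)$ to be the core graph on the skeleton vertices with each such string recorded as the corresponding element of $\Sigma$, sending $\EdE$ to $\Ess$. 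The inverse simply re-expands each $\Ess$ into a $2$-valent sink with two incoming edges, so $\kappa$ is a bijection on isomorphism classes. (Note that a $2$-valent source becomes a genuine $2$-valent core vertex here, which is the announced difference from the strictly $\ge 3$-valent core graphs of \cite{MultiSourced}.)

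Next I would check that $\kappa$ respects the grading and the signs. If $\Gamma$ has $V$ vertices and $E$ edges, of which $s$ are $2$-valent sinks, then the core graph has $V-s$ vertices and $E-s$ edges, exactly $s$ of which carry the type $\Ess$. Adding the vertex shift $[n-(V-s)n]$ to the edge-type contributions ($n-1$ for each $\Ed,\dE$ and $n-2$ for each $\Ess$) returns $n-Vn-(1-n)E$, the degree of $\Gamma$, so $\kappa$ preserves degree. For the orientation data one matches the edge-reversal action on $\Gamma$ with the $\sym_2$-action on $\Sigma$, namely $\Ed\leftrightarrow\dE$ and $\Ess\mapsto-(-1)^n\Ess$, and checks that the $\sgn_v$ (odd $n$) or $\sgn_e$ (even $n$) data of $\mD\G_n$ is transported to the edge-type-dependent signs of $\mD^{sk}\G_n$; this is the parity bookkeeping of \cite[Subsections 2.6--2.7]{MultiSourced}, applied verbatim, and it guarantees that $\kappa$ descends to the spaces of (co)invariants.

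The substantive step is to verify $\kappa\,d=d\,\kappa$, where on the right $d=d_C+(-1)^{n\deg}d_E$. I would split the edge-contraction differential on $\mD\G_n$ by the type of the contracted edge. Contracting a direct edge between two skeleton vertices either merges them into a legitimate skeleton vertex, reproducing the core contraction $d_C$, or creates a tadpole or passing vertex and is therefore zero, matching the conventions on $d_C$. Contracting one of the two edges of an $\EdE$-string fuses the sink with a neighbouring skeleton vertex and leaves a single direct edge, yielding $\Ed$ from one choice and $\dE$ from the other; reconciling the signs shows that this is exactly the edge differential $\Ess\mapsto\Ed-(-1)^n\dE$, i.e.\ $d_E$. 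I expect the genuine obstacle to be this last sign reconciliation: one must track the global factor $(-1)^{n\deg}$, the differential on $\Sigma$, and the $\sgn$-twists simultaneously so that the two differentials agree on the nose and not merely up to sign. Once the identity holds on single term graphs, linear extension and passage to (co)invariants give the isomorphism of complexes, as in \cite[Subsection 3.4]{MultiSourced}.
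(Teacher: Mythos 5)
Your proposal follows exactly the route the paper intends: the paper itself offers no written-out proof of this proposition (it says only that ``one can check that the degrees and parities are correctly defined'' and refers to \cite[Subsection 3.4]{MultiSourced}), and your three steps --- the local analysis showing every non-skeleton vertex is a $2$-valent sink lying on an $\EdE$ string, the degree count, and the identification of the two contractions of an $\EdE$ string with the edge differential $\Ess\mapsto\Ed-(-1)^n\dE$ --- are precisely the checks being deferred there. The degree bookkeeping you give is correct, as is the matching of $d_C$ with contractions of direct skeleton edges.

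However, one step fails as written for $n$ odd, and it is the same point the paper glosses over. You assert that each $2$-valent sink lies on a string $\EdE$ ``whose two outer vertices are skeleton vertices'', tacitly assuming these two outer vertices are distinct. They need not be: both incoming edges of a sink may come from the same vertex $x$ (a directed double edge). For $n$ even such graphs vanish, since swapping the two parallel edges is an automorphism acting by $\sgn_e$; but for $n$ odd they survive (the swap is trivial on vertices), e.g.\ the two-vertex graph $x\rightrightarrows z$ is a nonzero element of $\mD\G_n$ of degree $n-2$, and more generally any graph with a ``balloon'' (two parallel edges into a sink) is nonzero. For such a graph your $\kappa$ produces a tadpole of type $\Ess$ at $x$, which is excluded from the core graphs $\bar\mV_v\bar\mE_e\grac^{3}$, so $\kappa$ is not well defined (equivalently, not injective) on these classes. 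The intertwining of differentials also breaks there: if two skeleton vertices are joined both by a direct edge and by an $\EdE$ string through a sink, contracting the direct edge is nonzero in $\mD\G_n$ for $n$ odd (it creates a legal double edge into the sink), whereas in the skeleton complex the corresponding $d_C$-term creates a forbidden $\Ess$-tadpole and is set to zero. A complete proof must either enlarge the core graphs to admit tadpoles of type $\Ess$ (harmless for $n$ even, where they vanish by symmetry, and exactly what is needed for $n$ odd) or dispose of these graphs separately. This defect is inherited from the paper's own formulation rather than introduced by you, but your write-up does not detect it, and the sign reconciliation you flag as ``the genuine obstacle'' is not actually where the argument can go wrong.
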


Since oriented and sourced graph complexes $\left(\mO\G_n,d\right)$ and $\left(\mS\G_n,d\right)$ are both quotient of directed graph complex $\left(\mD\G_n,d\right)$, we can induce skeleton versions of oriented and sourced graph complexes as follows.

\begin{defprop}
\label{defprop:sk}
Skeleton versions of oriented and sourced graph complexes are
\begin{equation}
\mO^{sk}\G_n:=\kappa\left(\mO\G_n\right),
\end{equation}
\begin{equation}
\mS^{sk}\G_n:=\kappa\left(\mS\G_n\right),
\end{equation}
with the differential $d$ as on $\mD^{sk}\G_n$, where forbidden graphs are considered zero.

The quotient maps, by abuse of notation again denoted by $\kappa$, are isomorphisms of complexes.
\begin{equation}
\kappa:\left(\mO\G_n,d\right)\rightarrow\left(\mO^{sk}\G_n,d\right),
\end{equation}
\begin{equation}
\kappa:\left(\mS\G_n,d\right)\rightarrow\left(\mS^{sk}\G_n,d\right).
\end{equation}
\end{defprop}

\section{The map $\Phi$}
\label{s:map}

In this section we are going to define the main map $\Phi:\left(\HG_{n},d+h\right)\rightarrow\left(\mO\GC_{n+1},\delta\right)$. Thanks to Definition/Proposition \ref{defprop:sk}, it suffices to give
\begin{equation}
\Phi:\left(\HG_{n},d+h\right)\rightarrow\left(\mO^{sk}\G_{n+1},d\right).
\end{equation}
The map 
\begin{equation*}
G: \mO \GC_{n+1}  \to \HGC_{n},
\end{equation*}
from Theorem \ref{thm:main} is the map dual of $\Phi$.


\subsection{Forests}
Let us pick the number of vertices $v$, the number of edges $e$ and the number of hairs $s$.
Let $\Gamma\in\bar\mV_v\bar\mE_e\bar\mH_s\G_{n}$ be a single term graph.

A \emph{forest} is any sub-graph of $\Gamma$ that contains all its hairs (and thus all vertices with hairs), that does not contain cycles (of any orientation), and whose every connected component has exactly one hair. Let a \emph{spanning forest} be a forest that contains all vertices.
Let $F(\Gamma)$ be the set of all spanning forests of $\Gamma$.
An example of a spanning forest is given in Figure \ref{fig:span}.

\begin{figure}[H]
$$
\begin{tikzpicture}
 \node[int] (a) at (0,0) {};
 \node[int] (b) at (1,0) {};
 \node[int] (c) at (1,1) {};
 \node[int] (d) at (0,1) {};
 \node[int] (a1) at (-.5,-.5) {};
 \node[int] (b1) at (1.5,-.5) {};
 \node[int] (c1) at (1.5,1.5) {};
 \node[int] (d1) at (-.5,1.5) {};
 \node[int] (x) at (-1,.5) {};
 \node[int] (y) at (1.5,.5) {};
 \node[int] (z) at (2.2,.5) {};
 \draw (a) edge[red] (b);
 \draw (b) edge[dotted] (c);
 \draw (c) edge[dotted] (d);
 \draw (d) edge[dotted] (a);
 \draw (a1) edge[dotted] (b1);
 \draw (b1) edge[red] (y);
 \draw (y) edge[red] (c1);
 \draw (b1) edge[dotted] (z);
 \draw (z) edge[dotted] (c1);
 \draw (y) edge[red] (z);
 \draw (c1) edge[dotted] (d1);
 \draw (d1) edge[dotted] (a1);
 \draw (a) edge[red] (a1);
 \draw (b) edge[dotted] (b1);
 \draw (c) edge[red] (c1);
 \draw (d) edge[red] (d1);
 \draw (a1) edge[dotted] (x);
 \draw (x) edge[red] (d1);
 \draw (x) edge (-1.3,.5);
 \draw (a1) edge (-.7,-.7);
 \draw (b1) edge (1.7,-.7);
\end{tikzpicture}
$$
\caption{\label{fig:span}
An example of a hairy graph and a spanning forest. Edges of the forest are red, while other edges are dotted.}
\end{figure}

\subsection{Model pairs}

For a chosen spanning forest $\tau\in F(\Gamma)$, our first goal is to define $\Phi_{\tau}(\Gamma)\in\bar\mV_v\bar\mE_e\mO^{sk}\G_{n+1}$. As our final goal is to define a map
$$
\Phi: \mV_v\mE_e\mH_s\G_{n} \to \mV_v \mE_e\mO^{sk}\G_{n+1},
$$
the definition of $\Phi_{\tau}(\Gamma)$ must be invariant of the action of $\sym_v\times \sym_s\times \left(\sym_e\ltimes \sym_2^{\times e}\right)$. We will only define $\Phi_{\tau}(\Gamma)$ on some pairs $(\Gamma,\tau)$ that are called \emph{models}, and we extend the definition to all pairs $(\Gamma,\tau)$ by the requirement that the map is invariant under the symmetry action. In order to do that correctly, we need to check two conditions:
\begin{enumerate}
\item[(i)] if there is an element of $\sym_v\times \sym_s\times \left(\sym_e\ltimes \sym_2^{\times e}\right)$ that sends one model to another model, the definition is invariant under its action, and
\item[(ii)] for every pair $(\Gamma,\tau)$, there is an element of $\sym_v\times \sym_s\times \left(\sym_e\ltimes \sym_2^{\times e}\right)$ that sends it to a model.
\end{enumerate}

Note that the spanning forest $\tau$ has $v-s$ edges.
For a pair $(\Gamma,\tau)$ to be a model we require that:
\begin{itemize}
\item edges of a connected component in $\tau$ are directed away from the hair of that connected component;
\item an edge in $\tau$ has the same label as the vertex it is heading to, labels being in the set $\{1,\dots,v-s\}$;
\item a hair labelled by $x$ stands on the hairy vertex labelled by $x+v-s$.
\end{itemize}
An example of a model is given in Figure \ref{fig:model}.

\begin{figure}[H]
$$
\begin{tikzpicture}[scale=1.6]
 \node[int] (a) at (0,0) {};
 \node[below right] at (a) {\bf 1};
 \node[int] (b) at (1,0) {};
 \node[below left] at (b) {\bf 2};
 \node[int] (c) at (1,1) {};
 \node[above left] at (c) {\bf 3};
 \node[int] (d) at (0,1) {};
 \node[above right] at (d) {\bf 4};
 \node[int] (a1) at (-.5,-.5) {};
 \node[below right] at (a1) {\bf 10};
 \node[int] (b1) at (1.5,-.5) {};
 \node[below left] at (b1) {\bf 11};
 \node[int] (c1) at (1.5,1.5) {};
 \node[above left] at (c1) {\bf 5};
 \node[int] (d1) at (-.5,1.5) {};
 \node[above right] at (d1) {\bf 6};
 \node[int] (x) at (-1,.5) {};
 \node[above left] at (x) {\bf 9};
 \node[int] (y) at (1.5,.5) {};
 \node[left] at (y) {\bf 7};
 \node[int] (z) at (2.2,.5) {};
 \node[right] at (z) {\bf 8};
 \draw (a) edge[red,->] node[above] {\color{black} 2} (b);
 \draw (b) edge[dotted,->] node[left] {9} (c);
 \draw (c) edge[dotted,->] node[below] {10} (d);
 \draw (d) edge[dotted,->] node[right] {11} (a);
 \draw (a1) edge[dotted,->] node[below] {14} (b1);
 \draw (b1) edge[red,->] node[right] {\color{black} 7} (y);
 \draw (y) edge[red,->] node[right] {\color{black} 5} (c1);
 \draw (b1) edge[dotted,->] node[right] {16} (z);
 \draw (z) edge[dotted,->] node[right] {17} (c1);
 \draw (y) edge[red,->] node[above] {\color{black} 8} (z);
 \draw (c1) edge[dotted,->] node[above] {12} (d1);
 \draw (d1) edge[dotted,->] node[right] {13} (a1);
 \draw (a) edge[red,<-] node[above] {\color{black} 1} (a1);
 \draw (b) edge[dotted,->] node[above] {15} (b1);
 \draw (c) edge[red,<-] node[below] {\color{black} 3} (c1);
 \draw (d) edge[red,<-] node[below] {\color{black} 4} (d1);
 \draw (a1) edge (-.7,-.7);
 \node[left] at (-.7,-.7) {\bf 2};
 \draw (a1) edge[dotted,->] node[left] {18} (x);
 \draw (x) edge[red,->] node[left] {\color{black} 6} (d1);
 \draw (x) edge (-1.3,.5);
 \node[left] at (-1.3,.5) {\bf 1};
 \draw (b1) edge (1.7,-.7);
 \node[right] at (1.7,-.7) {\bf 3};
\end{tikzpicture}
$$
\caption{\label{fig:model}
An example of model with the spanning forest from Figure \ref{fig:span}. Edges of the forest are red, while other edges are dotted. Labels of vertices and hairs are thick.}
\end{figure}

It is clear that every pair $(\Gamma,\tau)$ can be using an element of $\sym_v\times \sym_s\times \left(\sym_e\ltimes \sym_2^{\times e}\right)$ mapped to a model, so the condition (ii) is fulfilled.

\subsection{Defining the map for a model}

Let us now pick up a model $(\Gamma,\tau)$, consisting of a single term graph $\Gamma\in\bar\mV_v\bar\mE_e\bar\mH_s\G_{n}$ and $\tau\in F(\Gamma)$. A graph $\phi(\Gamma)$ obtained from $\Gamma$ by deleting all hairs and ignoring the degree is a core graph in $\bar\mV_v\bar\mE_e\Grac^3$. To its edges that belong to $E(\tau)$ we attach an edge type $\Ed$, and to those that do not belong to $E(\tau)$ we attach edge type $\Ess$ to get an element of $\bar\mV_v\bar\mE_e^{\sym}\Grac^{3}$, and then after taking coinvariants and adding the degrees an element
\begin{equation}
\Phi_{\tau}(\Gamma)\in\mV_v\mE_e\mD^{sk}\G_{n+1}.
\end{equation}

It is straightforward to check the following:
\begin{itemize}
\item the map is well defined in a sense that if there is an element of $\sym_v\times \sym_s\times \left(\sym_e\ltimes \sym_2^{\times e}\right)$ that sends one model to another model, the same result in $\mV_v\mE_e\mD^{sk}\GC_{n+1}$ is obtained, i.e.\ condition (i) from above is fulfilled;
\item The degree of $\Phi_{\tau}(\Gamma)$ is by 1 greater that the degree of $\Gamma$;
\item The result $\Phi_{\tau}(\Gamma)$ is oriented, i.e.\ it is an element of $\mV_v\mE_e\mO^{sk}\GC_{n+1}$.
\end{itemize}

An example of $\Phi_{\tau}(\Gamma)$ is given in Figure \ref{fig:Phi}.

\begin{figure}[H]
$$
\begin{tikzpicture}
 \node[int] (a) at (0,0) {};
 \node[int] (b) at (1,0) {};
 \node[int] (c) at (1,1) {};
 \node[int] (d) at (0,1) {};
 \node[int] (a1) at (-.5,-.5) {};
 \node[int] (b1) at (1.5,-.5) {};
 \node[int] (c1) at (1.5,1.5) {};
 \node[int] (d1) at (-.5,1.5) {};
 \node[int] (x) at (-1,.5) {};
 \node[int] (y) at (1.5,.5) {};
 \node[int] (z) at (2.2,.5) {};
 \draw (a) edge[-latex] (b);
 \draw (b) edge[crossed,->] (c);
 \draw (c) edge[crossed,->] (d);
 \draw (d) edge[crossed,->] (a);
 \draw (a1) edge[crossed,->] (b1);
 \draw (b1) edge[-latex] (y);
 \draw (y) edge[-latex] (c1);
 \draw (b1) edge[crossed,->] (z);
 \draw (z) edge[crossed,->] (c1);
 \draw (y) edge[-latex] (z);
 \draw (c1) edge[crossed,->] (d1);
 \draw (d1) edge[crossed,->] (a1);
 \draw (a) edge[latex-] (a1);
 \draw (b) edge[crossed,->] (b1);
 \draw (c) edge[latex-] (c1);
 \draw (d) edge[latex-] (d1);
 \draw (a1) edge[crossed,->] (x);
 \draw (x) edge[-latex] (d1);
\end{tikzpicture}
$$
\caption{\label{fig:Phi}
Oriented graph $\Phi_\tau(\Gamma)$ for the graph $\Gamma$ and spanning forest $\tau$ from  Figure \ref{fig:span}.}
\end{figure}

\subsection{The final map}

The map is now extended to all pairs $(\Gamma,\tau)$ by invariance under the action of $\sym_v\times \sym_s\times \left(\sym_e\ltimes \sym_2^{\times e}\right)$. Then let us define
\begin{equation}
\Phi:\bar\mV_v\bar\mE_e\bar\mH_s\G_{n}\rightarrow\mV_v\mE_e\mO^{sk}\G_{n+1}, \quad
\Gamma\mapsto\sum_{\tau\in F(\Gamma)}\Phi_{\tau}(\Gamma).
\end{equation}
The invariance under all actions implies that the induced map $\Phi:\mV_v\mE_e\mH_s\G_{n}\rightarrow\mV_v\mE_e\mO^{sk}\G_{n+1}$ is well defined. It is then extended to the whole
\begin{equation}
\Phi:\mH\G_{n}\rightarrow\mO^{sk}\G_{n+1}.
\end{equation}

\begin{prop}
\label{prop:map}
The map $\Phi:\left(\HG_{n},d+h\right)\rightarrow\left(\mO^{sk}\GC_{n+1},d\right)$ is a map of complexes of degree 1, i.e.\
$$
\Phi((d+h)\Gamma)=d \Phi(\Gamma)
$$
for every $\Gamma\in\HG_{n}$.
\end{prop}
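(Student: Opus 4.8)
The plan is to work with single term graphs carrying a model and to verify the two pieces of the identity separately, namely $\Phi(d\Gamma)=d_C\Phi(\Gamma)$ and $\Phi(h\Gamma)=(-1)^{n\deg}d_E\Phi(\Gamma)$, after which $\Phi((d+h)\Gamma)=d\Phi(\Gamma)$ follows from $d=d_C+(-1)^{n\deg}d_E$. Recall that $\Phi(\Gamma)=\sum_{\tau\in F(\Gamma)}\Phi_\tau(\Gamma)$, where $\Phi_\tau(\Gamma)$ carries the type $\Ed$ on the forest edges, oriented away from the hairs, and $\Ess$ elsewhere. I would organize both computations by reading off, for a fixed skeleton graph $S$, the coefficient with which it occurs on each side; since a term in the image of $\Phi$ is determined by its subgraph $D=D(S)$ of $\Ed$-edges together with its orientation, it suffices to match these coefficients.

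For the contraction identity the key input is the bijection
\[
F(\Gamma/a)\xrightarrow{\ \sim\ }\{\tau\in F(\Gamma):a\in E(\tau)\},\qquad \sigma\mapsto \sigma\cup\{a\},
\]
valid because reinserting the contracted edge $a$ only splits the merged vertex, neither creating a cycle nor putting two hairs in one component. As $d_C$ contracts exactly the $\Ed$-edges, contracting the forest edge $a$ in $\Phi_\tau(\Gamma)$ yields $\Phi_{\tau/a}(\Gamma/a)$; summing over $\tau$ and over $a\in E(\tau)$ and reindexing by $a$ and $\sigma=\tau/a$ through the bijection gives $d_C\Phi(\Gamma)=\sum_a\Phi(\Gamma/a)=\Phi(d\Gamma)$. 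The only work here is the comparison of the contraction signs in $\HG_n$ and in $\mO^{sk}\G_{n+1}$, which is routine.

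The hair-deletion identity is the heart of the matter. Here $d_E$ turns one $\Ess$-edge $b$ into $\Ed-(-1)^n\dE$, that is, it promotes $b$ to an $\Ed$-edge with one of its two orientations, so the $\Ed$-subgraph passes from a forest $\tau$ to $\tau\cup\{b\}$. I would fix the target $D$ and analyse both sides. On the $\Phi h$ side, $D$ occurs with coefficient one precisely when $D$ is a disjoint union of trees, each oriented away from a root that is a hair, exactly one of which also contains a second hair-vertex $x_i$; the deleted hair $i$ is then forced, so this side is a $0/1$ count. On the $d_E\Phi$ side, $D=\tau\cup\{b\}$ forces $\tau=D\setminus\{b\}$ to be a rooted spanning forest of $\Gamma$ whose roots are hairs. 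The decisive point is that the requirement that every component be rooted at a hair singles out a unique admissible edge $b$, namely the edge of $D$ entering the extra hair-vertex $x_i$, because deleting any other edge of the relevant path would root a component at a non-hair. Thus, when $D$ is a genuine rooted forest, both sides contribute a single term and matching them is a sign check.

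The main obstacle is to show that all remaining terms of $d_E\Phi(\Gamma)$ cancel. These arise when the promoted edge $b$ joins two vertices of the same tree component, so that $D=\tau\cup\{b\}$ contains a cycle and hence cannot be a rooted forest of any $\Gamma/i$; such $D$ is absent from $\Phi(h\Gamma)$. For these I expect a twofold mechanism: the orientation of $b$ that closes a directed cycle is zero in $\mO^{sk}\G_{n+1}$, while the surviving orientations, summed over the choices of which edge of the unique cycle of $D$ is demoted back to $\Ess$, form an alternating sum that telescopes to zero. Establishing that the reorientation signs together with the factor $-(-1)^n$ in $\Ed-(-1)^n\dE$ are exactly the alternating signs required for this cancellation, and simultaneously that they combine with $(-1)^{n\deg}$ to give coefficient $+1$ on the genuine hair-deletion terms, is the delicate computation on which the proposition rests; once it is carried out, the two identities assemble into $\Phi((d+h)\Gamma)=d\Phi(\Gamma)$.
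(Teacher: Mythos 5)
Your strategy is essentially the paper's, reformulated: the paper proves the $d_C$-identity with the very same bijection $F(\Gamma/a)\leftrightarrow\{\tau\in F(\Gamma):a\in E(\tau)\}$, and then splits $d_E=d_{ED}+d_{EC}$ according to whether the promoted $\Ess$-edge joins two components of $\tau$ or closes a cycle inside one component, proving the cancellations by reindexing over double-hair forests (Lemma \ref{lem:map4}) and cycled forests (Lemma \ref{lem:map3}) and telescoping. Your coefficient-matching on a fixed oriented $\Ed$-subgraph $D$ is a legitimate repackaging of this, and your uniqueness argument in the rooted-forest case (the only admissible promoted edge is the one entering the second hair-vertex $x_i$) correctly identifies the surviving end terms of the paper's telescoping sum.

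However, there is a genuine gap in your bookkeeping of the unmatched terms. You assert that the remaining terms of $d_E\Phi(\Gamma)$ ``arise when the promoted edge $b$ joins two vertices of the same tree component,'' i.e.\ only from cycle-creating edges. This is false: there is a third family, namely the terms where $b$ joins two \emph{different} components of $\tau$ but its head, in the chosen orientation, is not the hair-vertex of the other component. The resulting oriented $\Ed$-subgraph is acyclic, yet it is not a rooted forest in your sense: the merged component has two source vertices (the two hair-vertices) and one ``collision'' vertex with two incoming $\Ed$-edges. Such terms occur with nonzero coefficient in $d_{E}\Phi(\Gamma)$, are absent from $\Phi(h\Gamma)$, and contain no cycle, so neither your matched count nor your cycle cancellation says anything about them. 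They must cancel among themselves: for a fixed collision graph there are exactly two contributing pairs $(\tau,b)$, where $b$ is one of the two edges of the hair-to-hair path pointing into the collision vertex, and these carry opposite signs. This is precisely the cancellation of the interior terms in the telescoping sum of Lemma \ref{lem:map4}; in the paper, producing the two hair-deletion terms and killing the collision terms are one and the same computation, which is why separating them as you do makes this family easy to overlook. Your proof of the hair-deletion identity is incomplete until this pairwise cancellation is added; with it, and with the sign checks you defer (as the paper also does), the argument goes through.
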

\begin{proof}

We have already checked that the degree of $\Phi$ is 1.

For the other claim let us pick a single term graph $\Gamma\in\bar\mV_v\bar\mE_e\bar\mH_s\G_{n}$.
It holds that
$$
\Phi(d\Gamma)=
\Phi\left(\sum_{a\in E(\Gamma)}\Gamma/a\right)=
\sum_{a\in E(\Gamma)}\Phi\left(\Gamma/a\right)=
\sum_{a\in E(\Gamma)}\sum_{\tau\in F(\Gamma/a)}\Phi_{\tau}(\Gamma/a)
$$
where $\Gamma/a$ is contracting an edge $a$ in $\Gamma$.
Spanning forests of $\Gamma/a$ are in natural bijection with spanning forests of $\Gamma$ that contain $a$, $\tau/a\leftrightarrow\tau$, so we can write
$$
\Phi(d\Gamma)=
\sum_{a\in E(\Gamma)}
\sum_{\substack{\tau\in F(\Gamma)\\ a\in E(\tau)}}\Phi_{\tau/a}(\Gamma/a)=
\sum_{\tau\in F(\Gamma)}\sum_{a\in E(\tau)}\Phi_{\tau/a}(\Gamma/a).
$$

\begin{lemma}
Let $\Gamma\in\bar\mV_v\bar\mE_e\bar\mH_s\G_{n}$, $\tau\in F(\Gamma)$ and $a\in E(\tau)$. Then
\begin{equation}
\Phi_{\tau/a}(\Gamma/a)\sim \Phi_{\tau}(\Gamma)/a,
\end{equation}
where $\sim$ means that they are in the same class of coinvariants under the action of $\sym_v\times\sym_e$.
\end{lemma}
\begin{proof}
It is clear that one side is $\pm$ the other side. Careful calculation of the sign is left to the reader.
\end{proof}



The lemma implies that
\begin{equation}
\Phi(d\Gamma)\sim\sum_{\tau\in F(\Gamma)}\sum_{a\in E(\tau)}\Phi_{\tau}(\Gamma)/a.
\end{equation}

Still on the left-hand-side of the claimed relation we have
$$
\Phi(h(\Gamma))=\Phi\left(\sum_{i\in H(\Gamma)}\Gamma/i\right)
=\sum_{i\in H(\Gamma)}\Phi(\Gamma/i)
=\sum_{i\in H(\Gamma)}\sum_{\tau\in F(\Gamma/i)}\Phi_{\tau}(\Gamma/i),
$$
where $\Gamma/i$ deletes hair $i$.

Spanning forests in $F(\Gamma/i)$ correspond to a kind of forests of $\Gamma$ where one connected component has two hairs. Therefore, let $FD(\Gamma)$ (\emph{double-hair forests}) be the set of all sub-graphs $\lambda$ of $\Gamma$ that contain all vertices and hairs, have no cycles, whose one connected component has exactly two hairs and whose other connected components have exactly one hair. Let those two hairs be $j(\lambda)$ and $k(\lambda)$.
Sets $\{(i,\tau)|i\in H(\Gamma),\tau\in F(\Gamma/i)\}$ and $\{(\lambda,i)|\lambda\in FD(\Gamma),i\in\{j(\lambda),k(\lambda)\}\}$ are clearly bijective.
So we have
\begin{equation}
\label{eq:map2}
\Phi(\Gamma/i)=
\sum_{\lambda\in FD(\Gamma)}\left(\Phi_{\lambda}(\Gamma/j(\lambda))+\Phi_{\lambda}(\Gamma/k(\lambda))\right).
\end{equation}

On the other side the differential is $d=d_C\pm d_E$. It acts on edges of $\Phi(\Gamma)$. They come from edges in $\Gamma$ and can be split into three sets:
\begin{itemize}
\item edges that are in $\tau$ form the set $E(\tau)$;
\item edges that connect two connected components of $\tau$ form the set $ED(\tau)$;
\item edges that make a cycle in a connected component of $\tau$ form the set $EC(\tau)$.
\end{itemize}

Only edges of type $\Ed$ and $\dE$ can be contracted by $d_C$. They come from the edges in $E(\tau)$ so
\begin{equation}
\label{eq:map1}
d_C \Phi(\Gamma)=
d_C\left(\sum_{\tau\in F(\Gamma)}\Phi_{\tau}(\Gamma)\right)=
\sum_{\tau\in F(\Gamma)}d_C\left(\Phi_{\tau}(\Gamma)\right)=
\sum_{\tau\in F(\Gamma)}\sum_{a\in E(\tau)}\Phi_{\tau}(\Gamma)/a\sim
\Phi(d\Gamma).
\end{equation}

The edge differential $d_E$ acts on edges of type $\Ess$, which are those in the sets $ED(\tau)$ and $EC(\tau)$. We then split
\begin{equation}
d_E(\Phi_\tau(\Gamma))=d_{ED}(\Phi_\tau(\Gamma))+d_{EC}(\Phi_\tau(\Gamma)),
\end{equation}
where
\begin{equation}
d_{ED}(\Phi_\tau(\Gamma))=\sum_{a\in ED(\tau)}d_E^{(a)}(\Phi_\tau(\Gamma)),\quad
d_{EC}(\Phi_\tau(\Gamma))=\sum_{a\in EC(\tau)}d_E^{(a)}(\Phi_\tau(\Gamma)),
\end{equation}

where $d_E^{(a)}$ maps edge $a$ as $\Ess\mapsto=\Ed-(-1)^{n+1}\dE$.


\begin{lemma}
\label{lem:map3}
Let $\Gamma\in\bar\mV_v\bar\mE_e\bar\mH_s\G_{n}$ be a single term graph. Then
$$
\sum_{\tau\in F(\Gamma)}d_{EC}\left(\Phi_{\tau}(\Gamma)\right)\sim 0.
$$
\end{lemma}
\begin{proof}
Let
$$
N(\Gamma):=
\sum_{\tau\in F(\Gamma)}d_{EC}\left(\Phi_{\tau}(\Gamma)\right)=
\sum_{\tau\in F(\Gamma)}\sum_{a\in EC(\tau)}d_E^{(a)}\left(\Phi_{\tau}(\Gamma)\right).
$$

Terms in the above relation can be summed in another order. Let $FC(\Gamma)$ (\emph{cycled forests}) be the set of all sub-graphs $\rho$ of $\Gamma$ that contain all vertices and hairs, have $v-s+1$ edges, and whose every connected component has exactly one hair. Those graphs are similar to spanning forests, but have one cycle.
Let $C(\rho)$ be the set of edges in the cycle of $\rho$. Clearly, $\rho\setminus \{a\}$ for $a\in C(\rho)$ is a spanning forest of $\Gamma$ and sets $\{(\tau,a)|\tau\in F(\Gamma),a\in EC(\tau)\}$ and $\{(\rho,a)|\rho\in FC(\Gamma),a\in C(\rho)\}$ are bijective, so
$$
N(\Gamma)=\sum_{\rho\in FC(\Gamma)}\sum_{a\in C(\rho)}d_E^{(a)}\left(\Phi_{\rho\setminus \{a\}}(\Gamma)\right).
$$

It is now enough to show that
$$
\sum_{a\in C(\rho)}d_E^{(a)}\left(\Phi_{\rho\setminus \{a\}}(\Gamma)\right)\sim 0
$$
for every $\rho\in FC(\Gamma)$.
Let $y\in V(\Gamma)$ be the vertex in the cycle of $\rho$ closest to the hair of its connected component (along $\rho$). After choosing $a\in C(\rho)$, the cycle in $\Phi_{\rho\setminus \{a\}}(\Gamma)$ has the edge $a$ of type $\Ess$, and other edges of type $\Ed$ or $\dE$ with direction from $y$ to the edge $a$, such as in the following diagram.
$$
\begin{tikzpicture}[baseline=0ex,scale=.7]
 \node[int] (a) at (0,1.1) {};
 \node[int] (b) at (1,.5) {};
 \node[int] (c) at (1,-.5) {};
 \node[int] (d) at (0,-1.1) {};
 \node[int] (e) at (-1,-.5) {};
 \node[int] (f) at (-1,.5) {};
 \node[above] at (a) {$\scriptstyle y$};
 \draw (a) edge[-latex] (b);
 \draw (b) edge[-latex] (c);
 \draw (a) edge[-latex] (f);
 \draw (f) edge[-latex] (e);
 \draw (e) edge[-latex] (d);
 \draw (c) edge[->,crossed] (d);
\end{tikzpicture}
$$
After acting by $d_E^{(a)}$ this $\Ess$ is replaced by $\Ed+(-1)^n\dE$, such as in the following diagram.
$$
\begin{tikzpicture}[baseline=-.6ex,scale=.7]
 \node[int] (a) at (0,1.1) {};
 \node[int] (b) at (1,.5) {};
 \node[int] (c) at (1,-.5) {};
 \node[int] (d) at (0,-1.1) {};
 \node[int] (e) at (-1,-.5) {};
 \node[int] (f) at (-1,.5) {};
 \node[above] at (a) {$\scriptstyle y$};
 \draw (a) edge[-latex] (b);
 \draw (b) edge[-latex] (c);
 \draw (a) edge[-latex] (f);
 \draw (f) edge[-latex] (e);
 \draw (e) edge[-latex] (d);
 \draw (c) edge[-latex] (d);
\end{tikzpicture}
\quad+(-1)^n\quad
\begin{tikzpicture}[baseline=-.6ex,scale=.7]
 \node[int] (a) at (0,1.1) {};
 \node[int] (b) at (1,.5) {};
 \node[int] (c) at (1,-.5) {};
 \node[int] (d) at (0,-1.1) {};
 \node[int] (e) at (-1,-.5) {};
 \node[int] (f) at (-1,.5) {};
 \node[above] at (a) {$\scriptstyle y$};
 \draw (a) edge[-latex] (b);
 \draw (b) edge[-latex] (c);
 \draw (a) edge[-latex] (f);
 \draw (f) edge[-latex] (e);
 \draw (e) edge[-latex] (d);
 \draw (c) edge[latex-] (d);
\end{tikzpicture}
$$
Careful calculation of the sign shows that those two terms are cancelled with terms given from choosing neighbouring edges in $C(\rho)$, and two last terms which do not have a corresponding neighbour are indeed $0$ as they have a cycle along arrows.
This concludes the proof that $N(\Gamma)=0$.
\end{proof}

The similar study of the action on edges from $ED(\tau)$ leads to the following lemma.

\begin{lemma}
\label{lem:map4}
Let $\Gamma\in\bar\mV_v\bar\mE_e\bar\mH_s\G_{n}$ be a single term graph. Then
$$
\sum_{\tau\in F(\Gamma)}d_{ED}\left(\Phi_{\tau}(\Gamma)\right)\sim
\sum_{\lambda\in FD(\Gamma)}\left(\Phi_{\lambda}\left(\Gamma/j(\lambda)\right)+\Phi_{\lambda}\left(\Gamma/k(\lambda)\right)\right).
$$
\end{lemma}
\begin{proof}
It holds that
$$
\sum_{\tau\in F(\Gamma)}d_{ED}\left(\Phi_{\tau}(\Gamma)\right)=
\sum_{\tau\in F(\Gamma)}\sum_{a\in ED(\tau)}d_E^{(a)}(\Phi_\tau(\Gamma)).
$$

For $\lambda\in FD(\Gamma)$ let $P(\lambda)$ be the set of edges in the path from $j(\lambda)$ to $k(\lambda)$. Clearly, $\lambda\setminus \{a\}$ for $a\in P(\lambda)$ is a spanning forest of $\Gamma$ and $a$ is in $ED(\Gamma)$ for that spanning forest. One can easily see that sets $\{(\tau,a)|\tau\in F(\Gamma),a\in ED(\tau)\}$ and $\{(\lambda,a)|\lambda\in FD(\Gamma),a\in P(\lambda)\}$ are bijective, so
$$
\sum_{\tau\in F(\Gamma)}d_{ED}\left(\Phi_{\tau}(\Gamma)\right)=
\sum_{\lambda\in FD(\Gamma)}\sum_{a\in P(\lambda)}d_E^{(a)}\left(\Phi_{\lambda\setminus\{a\}}(\Gamma)\right).
$$

To finish the proof it is enough to show that
$$
\sum_{a\in P(\lambda)}d_E^{(a)}\left(\Phi_{\lambda\setminus\{a\}}(\Gamma)\right)\sim
\Phi_{\lambda}\left(\Gamma/j(\lambda)\right)+\Phi_{\lambda}\left(\Gamma/k(\lambda)\right)
$$
for every $\lambda\in FD(\Gamma)$.
After choosing $a\in P(\lambda)$ the path from $j(\lambda)$ to $k(\lambda)$ along $\lambda$ in $\Phi_{\lambda\setminus\{a\}}(\Gamma)$ has the edge $a$ of type $\Ess$, and the other edges of type $\Ed$ or $\dE$ with direction from $j(\lambda)$ or $k(\lambda)$ to the edge $a$, such as in the following diagram.
$$
\begin{tikzpicture}[baseline=0ex,scale=.7]
 \node[int] (b) at (1,.5) {};
 \node[int] (c) at (1,-.5) {};
 \node[int] (d) at (0,-1.1) {};
 \node[int] (e) at (-1,-.5) {};
 \node[int] (f) at (-1,.5) {};
 \node[above] at (b) {$\scriptstyle k(\lambda)$};
 \node[above] at (f) {$\scriptstyle j(\lambda)$};
 \draw (b) edge[-latex] (c);
 \draw (f) edge[-latex] (e);
 \draw (e) edge[-latex] (d);
 \draw (c) edge[->,crossed] (d);
\end{tikzpicture}
$$
After acting by $d_E^{(a)}$ this $\Ess$ is replaced by $\Ed+(-1)^n\dE$, such as in the following diagram.
$$
\begin{tikzpicture}[baseline=-.6ex,scale=.7]
 \node[int] (b) at (1,.5) {};
 \node[int] (c) at (1,-.5) {};
 \node[int] (d) at (0,-1.1) {};
 \node[int] (e) at (-1,-.5) {};
 \node[int] (f) at (-1,.5) {};
 \node[above] at (b) {$\scriptstyle k(\lambda)$};
 \node[above] at (f) {$\scriptstyle j(\lambda)$};
 \draw (b) edge[-latex] (c);
 \draw (f) edge[-latex] (e);
 \draw (e) edge[-latex] (d);
 \draw (c) edge[-latex] (d);
\end{tikzpicture}
\quad+(-1)^n\quad
\begin{tikzpicture}[baseline=-.6ex,scale=.7]
 \node[int] (b) at (1,.5) {};
 \node[int] (c) at (1,-.5) {};
 \node[int] (d) at (0,-1.1) {};
 \node[int] (e) at (-1,-.5) {};
 \node[int] (f) at (-1,.5) {};
 \node[above] at (b) {$\scriptstyle k(\lambda)$};
 \node[above] at (f) {$\scriptstyle j(\lambda)$};
 \draw (b) edge[-latex] (c);
 \draw (f) edge[-latex] (e);
 \draw (e) edge[-latex] (d);
 \draw (c) edge[latex-] (d);
\end{tikzpicture}
$$
Careful calculation of the sign shows that those two terms are cancelled with terms given from choosing neighbouring edges in $P(\lambda)$. The
two last terms which does not have corresponding neighbour are exactly $\Phi_{\lambda}\left(\Gamma/j(\lambda)\right)$ and $\Phi_{\lambda}\left(\Gamma/k(\lambda)\right)$, which was to be demonstrated.
\end{proof}

Equations \eqref{eq:map1} and \eqref{eq:map2}, and Lemmas \ref{lem:map3} and \ref{lem:map4} imply that
\begin{multline*}
\Phi(d(\Gamma))+\Phi(h(\Gamma))\sim
d_C(\Phi(\Gamma))+\sum_{\lambda\in FD(\Gamma)}\left(\Phi_{\lambda}(h_{j(\lambda)}(\Gamma))+\Phi_{\lambda}(h_{k(\lambda)}(\Gamma))\right)\sim\\
\sim d_C(\Phi(\Gamma))+
\sum_{\tau\in F(\Gamma)}d_{EC}\left(\Phi_{\tau}(\Gamma)\right)+
\sum_{\tau\in F(\Gamma)}d_{ED}\left(\Phi_{\tau}(\Gamma)\right)=
d_C(\Phi(\Gamma))+d_E(\Phi(\Gamma))=d(\Phi(\Gamma)).
\end{multline*}

After taking coinvariants this implies that
$$
\Phi(d(\Gamma))+\Phi(h(\Gamma))=d(\Phi(\Gamma))
$$
for each $\Gamma\in\HG_{n}$. Hence, $\Phi:(\HG_{n},d+h)\to \mO (\G_{n+1},d)$ is a map of complexes.
\end{proof}

\subsection{Differential grading on the number of hairs/sources}

Note that the part of $\HG_{n}$ with $s$ hairs is mapped to the part of $\mO\G_{n+1}$ with $s$ sources, so the map $\Phi:\mH\G_{n}\rightarrow\mO\G_{n+1}$ can be split as
\begin{equation}
\Phi:\mH_s\G_{n}\rightarrow\mS_s\mO\G_{n+1},
\end{equation}
where $\mS_s\mO\G_{n+1}$ is the part of $\mO\G_{n+1}$ spanned by oriented graphs with exactly $s$ sources.

There are filtrations on $\left(\mH\G_{n},d+h\right)$ and $\left(\mO\G_{n+1},d\right)$ on the number of hairs and sources respectively. Their differential gradings are $\left(\mH\G_{n},d\right)$ and $\left(\mO\G_{n+1},d_0\right)$, where $d_0$ is the part of $d$ that does not destroy a source. We will refer to graph complexes with such a differential $d_0$ as \emph{fixed source graph complexes}.

As the map $\Phi$ maps a graph with $s$ hairs to a sum of graphs all containing $s$ sources, the same map of vector spaces, $\Phi$, is also a map of complexes
\begin{equation}
\Phi:\left(\mH\G_{n},d\right)\rightarrow\left(\mO\G_{n+1},d_0\right).
\end{equation}

\subsection{The dual map}
As the map $\Phi$ preserves the loop order, we obtain a dual map $\Phi\leftrightarrow G$
\begin{equation}
G:\left(\mO\GC_{n+1},\delta\right)\rightarrow\left(\mH\GC_{n},\delta+\chi\right).
\end{equation}
Similarly, the same map is a map of complexes
\begin{equation}
G:\left(\mO\GC_{n+1},\delta_0\right)\rightarrow\left(\mH\GC_{n},\delta\right),
\end{equation}
where $\delta_0\leftrightarrow d_0$ is the part of $\delta$ that does not produce a source by splitting non-source.

Let $\Gamma\in  \mO \GC_{n+1}$ be a single term graph. We define the \emph{hairy skeleton} $hs(\Gamma)\in \mH\GC_{n}$ to be the hairy graph obtained from $\Gamma$ by:
\begin{itemize}
\item putting a hair on each source vertex;
\item contracting each occurrence of bi-valent target vertices $\EdE$ into a single non-oriented edge;
\end{itemize}
It is evident that
 \begin{equation}\label{eq:G}
G(\Gamma) = \begin{cases} hs(\Gamma) & \text{if $\Gamma=\Phi_\tau(hs(\Gamma))$ for some spanning forest $\tau$} \\
0& \text{otherwise.}
\end{cases}
\end{equation}
\begin{prop}
Let $\Gamma\in  \mO \GC_{n+1}$ be a single term graph whose hairy skeleton $hs(\Gamma)$ contains $v$ vertices, $e$ edges, and $s$ sources . We have that $\Gamma=\Phi_\tau(hs(\Gamma))$ for some spanning forest $\tau$ if and only if $\Gamma$ contains $e-v+s$  bi-valent target vertices $\EdE$.
\end{prop}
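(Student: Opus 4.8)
The plan is to reduce the whole statement to one in-degree count on $\Gamma$. Writing $t$ for the number of bi-valent target vertices $\EdE$ of $\Gamma$, I first record that forming $hs(\Gamma)$ contracts each such $\EdE$ (deleting its central vertex and merging its two incoming edges) and only decorates the $s$ sources with hairs; hence $\Gamma$ has $v+t$ vertices and $e+t$ edges, with $s$ sources, and $(e+t)-(v+t)+s=e-v+s$. This lets me pass freely between the invariants of $\Gamma$ and those of $hs(\Gamma)$.

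For the forward implication I would argue directly from the construction of $\Phi_\tau$. If $\Gamma=\Phi_\tau(hs(\Gamma))$ then, since $hs(\Gamma)$ is connected with $v$ vertices and $s$ hairs, the spanning forest $\tau$ has $v-s$ edges, leaving $e-(v-s)=e-v+s$ non-forest edges; every forest edge becomes a directed $\Ed$ and every non-forest edge becomes an $\Ess$, i.e.\ an $\EdE$ contributing exactly one bi-valent target vertex, and no bi-valent target vertices arise otherwise. Thus $t=e-v+s$.

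For the converse I would first isolate the elementary remark that any edge pointing into a vertex which is not itself a bi-valent target vertex is \emph{plain}, i.e.\ incident to no $\EdE$, because its tail has an outgoing edge and so cannot be an out-degree-zero target. Then I would sum in-degrees over the vertices of $\Gamma$: the total is $e+t$, the $s$ sources contribute $0$, the $t$ bi-valent targets contribute $2t$, and each of the remaining $v-s$ vertices contributes at least $1$. This gives $t\le e-v+s$ with equality exactly when each of those $v-s$ vertices has a single incoming edge; so the hypothesis $t=e-v+s$ forces every non-source, non-target vertex to have a unique, necessarily plain, incoming edge.

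To finish, I would take $\tau$ to be the set of plain edges of $\Gamma$, viewed inside $hs(\Gamma)$ with their $\Gamma$-orientations; there are $e-t=v-s$ of them. The in-degree pattern makes each hairy vertex a root of in-degree $0$ and each remaining vertex have in-degree $1$ in $\tau$; an undirected cycle in $\tau$ would force in-degree $1$ at every one of its vertices and hence be a directed cycle, impossible in an oriented graph, so $\tau$ is acyclic, has $s$ components, and carries exactly one hair per component with its edges pointing away from that hair --- that is, $\tau$ is a model. Reapplying $\Phi_\tau$ then turns the forest edges back into the plain edges and the non-forest edges back into the $\EdE$'s, returning $\Gamma$. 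I expect the main obstacle to be this last structural step: converting the numerical equality into the spanning-forest property (acyclicity and the one-hair-per-component condition) and checking that the orientations and signs $\Phi_\tau$ produces agree with those already present in $\Gamma$, where the model conventions must be tracked carefully.
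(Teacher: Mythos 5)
Your proof is correct, and it follows the same overall skeleton as the paper's (forward direction by construction; converse by analyzing the subgraph of edges not incident to any $\EdE$), but the key counting argument in the converse is genuinely different. The paper deletes the bi-valent targets to obtain a subgraph $\gamma$ with $v$ vertices and $v-s$ edges --- exactly your set of plain edges --- and then argues via \emph{component counting}: such a graph has at least $s$ connected components, with equality if and only if it is a forest; since $\gamma$ is oriented, every component contains a source, and $\gamma$ has only $s$ sources, forcing equality and one source per component. You instead run a \emph{handshake (in-degree) count} on all of $\Gamma$, deducing from $t=e-v+s$ that every non-source, non-$\EdE$ vertex has in-degree exactly one, and then rule out cycles by observing that a cycle in which every vertex has in-degree one within the cycle must be a directed cycle, which orientedness forbids. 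Your route is slightly longer but buys extra structure for free: the in-degree-one property immediately shows each tree component is directed away from its unique in-degree-zero root (necessarily a source/hair), which is precisely the model condition for $\Phi_\tau$; in the paper this final identification $\Gamma=\Phi_{hs(\gamma)}(hs(\Gamma))$ is simply asserted. Your closing worry about orientations and signs is not a real gap relative to the paper's standard: the paper's own proof also works at the level of underlying single-term graphs and does not track signs at this point.
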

\begin{proof}
If $\Gamma=\Phi_\tau(hs(\Gamma))$, it is clear that $\Gamma$ must contain $e-v+s$ of  bi-valent target vertices $\EdE$. Now, let $\Gamma$ be an oriented graph that contains $e-v+s$ bi-valent target vertices $\EdE$. Let $\gamma$ be the subgraph of $\Gamma$ obtained by removing all bi-valent targets $\EdE$ from $\Gamma$. Note that $\gamma$ is a graph with $v$ vertices and $v-s$ edges. Such a graph must contain at least $s$ different connected components, with equality if and only if $\gamma$ is a forest. Next, note that $\gamma$ is oriented, hence each component must contain a source vertex, and $\gamma$ contains precisely $s$ source vertices.

We conclude that $\gamma$ must be a forest with $s$ components, where each component contains precisely one source vertex. Hence, we get that
$$\Gamma= \Phi_{hs(\gamma)}(hs(\Gamma)).$$
\end{proof}
This proposition gives us an alternative description of the map $G$.

 \begin{equation}\label{eq:Ge}
G(\Gamma) = \begin{cases} hs(\Gamma) & \text{if $\Gamma$ contains $v-e+s$ bi-valent target vertices $\EdE$} \\
0& \text{otherwise.}
\end{cases}
\end{equation}

\section{The map is a quasi-isomorphism}
\label{s:qi}

\begin{thm}
\label{thm:main1}
The map 
$$
\Phi:  \left(\HG_n,d\right)\to\left( \mO \G_{n+1},d_0\right)
$$
and the projection
$$
p:\left(\mS\G_n,d_0\right)\twoheadrightarrow\left(\mO\G_n,d_0\right)
$$
are quasi-isomorphism.
\end{thm}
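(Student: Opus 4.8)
The plan is to prove the two assertions separately and, in both cases, to reduce to finite-dimensional complexes. The differentials $d$ and $d_0$ preserve the loop order $b=e-v$ together with the number of hairs, respectively sources; $\Phi$ carries the $s$-hair, loop-order-$b$ part of $\HG_n$ into the $s$-source, loop-order-$b$ part of $\mO\G_{n+1}$, and $p$ preserves both gradings. Since a graph all of whose vertices are at least $3$-valent, with fixed $b$ and fixed $s$, has at most $2b+s$ vertices, each graded piece $(\mH_s\G_n)_b$, $(\mS_s\mO\G_{n+1})_b$ is finite-dimensional, and it suffices to treat one such piece at a time. For the projection $p$ I would note that the quasi-isomorphism $(\mS\G_n,d)\to(\mO\G_n,d)$ of \cite{MultiSourced} is proved by a spectral sequence that annihilates the graphs carrying a directed cycle, an argument compatible with the number of sources and hence descending to the source-preserving differential $d_0$; equivalently, this statement is the homological dual, in the sense of Subsection \ref{ss:dual}, of Proposition \ref{prop:sourced}. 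This settles the second claim, so the remaining work concerns $\Phi$.

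For $\Phi$ I would work in the skeleton model $\mO^{sk}\G_{n+1}$, where $\Phi_\tau(\Gamma)$ has the underlying graph of $\Gamma$ as its core, the forest edges of $\tau$ carrying the directed type $\Ed$ and every remaining edge the crossed type $\Ess$; here $d=d_C+(-1)^{n\deg}d_E$ and $d_0=d_{C,0}+d_{E,0}$ is its source-preserving part, where $d_C$ contracts a directed edge (lowering the number $v$ of core vertices by one) and $d_E$ resolves a crossed edge (preserving $v$ and the underlying core graph). I would filter both complexes by $v$, via $F_p=\{\,v\le p\,\}$. On the associated graded the oriented differential becomes the $v$-preserving piece $d_{E,0}$, whereas on the hairy side contraction strictly lowers $v$, so the hairy $E_0$-differential vanishes and $E_1=\mH_s\G_n$ with $d$ as its $E_1$-differential. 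The map $\Phi$ is filtration-preserving, and the fixed-source identity $d_0\Phi(\Gamma)=\Phi(d\Gamma)$ (the source-graded part of Proposition \ref{prop:map}) together with the fact that $\Phi(d\Gamma)$ lies in strictly lower filtration forces $d_{E,0}\Phi(\Gamma)=0$; thus each $\Phi(\Gamma)$ defines an $E_1$-class.

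The heart of the argument, and the step I expect to be the main obstacle, is the identification of the oriented $E_1$-page. After fixing $v$, the differential $d_{E,0}$ preserves both the underlying core graph $G_0$ and the source set $S$, so $E_0$ splits as a sum over pairs $(G_0,S)$ of the complex $\mathcal{C}_{G_0,S}$ of acyclic crossed/directed configurations on $G_0$ with source set exactly $S$. I would then prove the following \emph{Key Lemma}: the homology $H(\mathcal{C}_{G_0,S},d_{E,0})$ is one-dimensional, concentrated in the minimal degree (where the directed edges form a spanning forest rooted at $S$) and represented by $\sum_\tau\pm\Phi_\tau(\Gamma)$. The triangle case, with its three forest-type configurations and two double-incoming configurations and $d_{E,0}$ of rank two, already exhibits the one-dimensional homology spanned by the signed sum of the three spanning trees. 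In general I would establish the Key Lemma by an acyclic matching (discrete Morse) on $\mathcal{C}_{G_0,S}$: configurations in which some non-source vertex carries more than one incoming edge are paired off by inserting or deleting a distinguished extra incoming edge, chosen with respect to a fixed ordering, leaving only the forest-type configurations as critical cells together with a single surviving relation. The delicate part is the sign bookkeeping, entirely analogous to Lemmas \ref{lem:map3} and \ref{lem:map4}, which forces the surviving class to be exactly $\Phi(\Gamma)$ and not some other combination.

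Granting the Key Lemma, $\Phi$ induces on $E_1$ the map $\Gamma\mapsto[\Phi(\Gamma)]$ from $\mH_s\G_n$ onto $\bigoplus_{(G_0,S)}H(\mathcal{C}_{G_0,S},d_{E,0})$, which is an isomorphism of graded vector spaces, and by the $v$-lowering part of $d_0\Phi=\Phi d$ (equation \eqref{eq:map1}) it intertwines the two induced $E_1$-differentials. Hence $\Phi$ is an isomorphism of $E_1$-complexes, so an isomorphism on $E_\infty$; since both filtrations are bounded on each finite-dimensional piece, this upgrades to a quasi-isomorphism there and therefore to a quasi-isomorphism $\Phi:(\HG_n,d)\to(\mO\G_{n+1},d_0)$, completing the proof.
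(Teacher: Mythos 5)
Your strategy for the map $\Phi$ is essentially the paper's: filter by the number of vertices, observe that on the associated graded the oriented side carries only the source-preserving edge differential while the hairy side carries the zero differential, and reduce to a ``Key Lemma'' saying that, for a fixed core graph and fixed source set, the complex of admissible edge-type configurations has one-dimensional homology concentrated in the forest degree and generated by $\sum_\tau\Phi_\tau(\Gamma)$ --- this is exactly the content of the paper's chain of lemmas culminating in Lemma \ref{lem:QIfh}. However, there are two genuine gaps. The first concerns the projection: your treatment of $p:\left(\mS\G_n,d_0\right)\twoheadrightarrow\left(\mO\G_n,d_0\right)$ is circular, because Proposition \ref{prop:sourced} is precisely the dual statement of this second claim of Theorem \ref{thm:main1}; it is announced in the introduction as a result of the paper and its only proof \emph{is} this theorem, so it cannot be invoked here. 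Your alternative justification --- that the quasi-isomorphism $\left(\mS\G_n,d\right)\to\left(\mO\G_n,d\right)$ of \cite{MultiSourced} ``descends'' to $d_0$ --- is also not a proof: $d_0$ is the associated graded differential of $d$ with respect to the source filtration, and a quasi-isomorphism of filtered complexes does not in general induce one on the associated graded (the implication goes the other way). One would have to reopen the proof in \cite{MultiSourced} and verify that every step respects the number of sources; the paper avoids this by proving both claims simultaneously, running the identical collapsing argument on the sourced complexes $\langle\mS\Gamma^i\rangle$ in parallel with the oriented ones and comparing them through the commuting squares with $\pi_i$.

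The second gap is in the proof of your Key Lemma, which you yourself flag as the main obstacle. The acyclic matching you describe cannot work as stated: if the critical cells were ``the forest-type configurations,'' the Morse complex would be concentrated in a single degree with vanishing Morse differential, so the homology would have dimension equal to the number of spanning forests rooted at $S$, not one. Your own triangle example shows what a correct matching must do: with source $x$ and configurations $A=(x{\to}y,\,x{\to}z,\,y{\to}z)$, $B=(x{\to}y,\,x{\to}z,\,z{\to}y)$, a valid perfect matching pairs $A$ and $B$ with \emph{two of the three} spanning trees (e.g.\ $A$ with the tree $x{\to}y{\to}z$ and $B$ with $x{\to}z{\to}y$), leaving a single critical cell; and the rule ``delete a distinguished extra incoming edge'' is delicate, since choosing the extra edge of $A$ to be $y{\to}z$ and of $B$ to be $z{\to}y$ sends both to the same tree, which is not a matching at all. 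So the combinatorial heart of the argument is still missing. The paper's proof can be read as the correct, structured version of this collapse: fix a nested sequence of edges $a_1,\dots,a_{v-s}$ whose initial segments are forests, successively force each $a_i$ to be a tree edge (type $\ET$) via the maps $f^i$ and $g^i$, each shown to be a quasi-isomorphism by a three-case analysis, and end at a one-dimensional complex, where non-vanishing of $f\circ\Phi$ (Lemma \ref{lem:QIfh}) identifies the generator with the class of $\Phi(\Gamma)$. Finally, note that before splitting over core graphs you must reconcile the symmetry groups --- the hairy complex is built from invariants under $\sym_v\times\sym_s\times\left(\sym_e\ltimes\sym_2^{\times e}\right)$, the skeleton complexes under $\sym_v\times\sym_e$ --- which the paper does by rewriting $\mV_v\mE_e\mH_s\G_n$ in two steps and using that taking homology commutes with coinvariants; this is routine in characteristic zero, but your proposal skips it.
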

\begin{proof}
We prove both claims simultaneously. After splitting by the number of hairs/sources, we need to prove that
$$
\Phi:  \left(\mH_s\G_n,d\right)\to\left( \mS_s\mO \G_{n+1},d_0\right)
\twoheadleftarrow\left( \mS_s\G_{n+1},d_0\right)
$$
are quasi-isomorphisms. Using Definition/Proposition \ref{defprop:sk} it is enough to prove the claim for
$$
\Phi:  \left(\mH_s\G_n,d\right)\to\left( \mS_s\mO^{sk} \G_{n+1},d_0\right)
\twoheadleftarrow\left( \mS_s^{sk}\G_{n+1},d_0\right)
$$
where
\begin{equation}
\mS_s\mO^{sk}\G_n:=\kappa\left(\mS_s\mO\G_n\right),
\end{equation}
\begin{equation}
\mS_s^{sk}\G_n:=\kappa\left(\mS_s\G_n\right).
\end{equation}
On the mapping cone of them we set up the spectral sequence on the number of vertices. Standard splitting of complexes as the product of complexes with fixed loop number implies the correct convergence. It is therefore enough to prove the claim for the first differential of the spectral sequence (differential grading).

The edge differential does not change the number of vertices, while the core differential does. Therefore, on the first page of the spectral sequences there are mapping cones of the maps
$$
\Phi:  \left(\mH_s\G_n,0\right)\to\left( \mS_s\mO^{sk} \G_{n+1},d_{E0}\right)
\twoheadleftarrow\left( \mS_s^{sk}\G_{n+1},d_{E0}\right),
$$
where $d_{E0}$ is the part of the edge differential $d_E$ that does not destroy a source.

Complexes are direct sums of those with fixed number of vertices and edges, so it is enough to show the claim for
$$
\Phi:  \left(\mV_v\mE_e\mH_s\G_n,0\right)\to
\left(\mV_v\mE_e\mS_s\mO^{sk} \G_{n+1},d_{E0}\right)
\twoheadleftarrow\left( \mV_v\mE_e\mS_s^{sk}\G_{n+1},d_{E0}\right),
$$
where $\mV_v\mE_e\mS_s\mO^{sk} \G_{n+1}$ is the part of $\mV_v\mE_e\mO^{sk} \G_{n+1}$ with $s$ sources.

Recall that the skeleton versions of oriented and sourced graph complexes $\mV_v\mE_e\mS_s\mO^{sk} \G_{n+1}$ and $\mV_v\mE_e\mS_s^{sk} \G_{n+1}$ are spaces of invariants of the action of $\sym_v\times\sym_e$, while the hairy graph complex $\mV_v\mE_e\mH_s\G_n$ is the space of invariants of the action of $\sym_v\times \sym_s\times \left(\sym_e\ltimes \sym_2^{\times e}\right)$. In order to have the same group acting in a latter case, let us redefine the hairy graph complex in two steps: Let
\begin{equation}
\bar\mV_v\dot\mE_e\mH_s\G_{n}:=\left\{
\begin{array}{ll}
\left(\bar\mV_v\bar\mE_e\bar\mH_s\G_n\otimes\sgn_s\right)^{\sym_s\times\sym_2^{\times e}}
\qquad&\text{for $n$ even,}\\
\left(\bar\mV_v\bar\mE_e\bar\mH_s\G_n\otimes\sgn_s\otimes\sgn_2^{\otimes e}\right)^{\sym_s\times\sym_2^{\times e}}
\qquad&\text{for $n$ odd,}
\end{array}
\right.
\end{equation}
and then
\begin{equation}
\mV_v\mE_e\mH_s\G_{n}=\left\{
\begin{array}{ll}
\left(\bar\mV_v\dot\mE_e\mH_s\G_{n}\otimes\sgn_e\right)^{\sym_v\times\sym_e}
\qquad&\text{for $n$ even,}\\
\left(\bar\mV_v\dot\mE_e\mH_s\G_{n}\otimes\sgn_v\right)^{\sym_v\times\sym_e}
\qquad&\text{for $n$ odd.}
\end{array}
\right.
\end{equation}

The action of $\sym_v\times\sym_e$ clearly commutes with the map $\Phi$. Since the edge differential does not change the number of vertices and edges, taking homology commutes with taking coinvariants of that action. Therefore, it is now enough to show the claim for
$$
\Phi:  \left(\bar\mV_v\dot\mE_e\mH_s\G_{n},0\right)\to
\left(\bar\mV_v\bar\mE_e\mS_s\mO^{sk} \G_{n+1},d_{E0}\right)
\twoheadleftarrow\left(\bar\mV_v\bar\mE_e\mS_s^{sk}\G_{n+1},d_{E0}\right).
$$

Let us pick up a particular single term graph $\Gamma\in\bar\mV_v\dot\mE_e\mH_s\G_{n}$. Note that in $\Gamma$ edges are (up to the sign) undirected. Let $\langle\mO\Gamma\rangle$ be the subspace of $\bar\mV_v\bar\mE_e\mS_s\mO^{sk} \G_{n+1}$ spanned by oriented graphs of the shape $\Gamma$ without hairs, but with source exactly where the hairs were. Similarly, let $\langle\mS\Gamma\rangle$ be the subspace of $\bar\mV_v\bar\mE_e\mS_s{sk} \G_{n+1}$ spanned by sourced graphs of the shape $\Gamma$ without hairs, but with source exactly where the hairs were.

The map $\Phi$ is defined such that $\Phi(\Gamma)\in\langle\mO\Gamma\rangle$. Also, differential $d_{E0}$ acts within particular subspaces $\langle\mO\Gamma\rangle$ and $\langle\mS\Gamma\rangle$. Therefore, we can split the map as a direct sum and it is enough to prove the clam for
\begin{equation}
\label{eq:hreduced}
\Phi:  \left(\langle\Gamma\rangle,0\right)\to
\left(\langle\mO\Gamma\rangle,d_{E0}\right)
\twoheadleftarrow\left(\langle\mS\Gamma\rangle,d_{E0}\right),
\end{equation}
for every $\Gamma\in\bar\mV_v\dot\mE_e\mH_s\G_{n}$.

%

In order to prove that, let us choose $v-s$ edges in $\Gamma$, say $a_1,\dots,a_{v-s}$. Let $F(a_1,\dots,a_i)$ be the sub-graph of $\Gamma$ that includes those edges, all hairs and all necessary vertices. We require that for every $i=1,\dots,v-s$ the sub-graph $F(a_1,\dots,a_i)$ is a forest. Recall that in a forest, every connected component has exactly one hair. Clearly, $F(a_1,\dots, a_{v-s})$ is a spanning forest.

For every $i=0,\dots,v-s$, we form two graph complexes $\langle\mO\Gamma^i\rangle$ and $\langle\mS\Gamma^i\rangle$ as follows: They are all spanned by graphs with a core graph being un-haired $\Gamma$ with attached edge types from dg $\langle \sym_2\rangle$ module $\bar\Sigma$ spanned by $\{\Ed[n],\dE[n],\Ess[n-1],\ET[n]\}$ with $\sym_2$ action
\begin{equation}
\Ed \leftrightarrow\dE,\quad
\Ess\mapsto (-1)^{n}\Ess,\quad
\ET\mapsto (-1)^{n+1}\ET,
\end{equation}
and the differential
\begin{equation}
\Ess\mapsto\Ed-(-1)^n\dE.
\end{equation}

The complex $\langle\mS\Gamma^i\rangle$ is spanned by graphs whose attached edge types fulfill the following conditions:
\begin{enumerate}
\item Edges $a_1,\dots, a_i$ have type $\ET$, and other edges have other types.
\item No vertex in the forest $F(a_1,\dots,a_i)$ has a neighbouring edge of type $\Ed$ or $\dE$ heading towards it.
\item Every vertex outside the forest $F(a_1,\dots,a_i)$ has a neighbouring edge of type $\Ed$ or $\dE$ heading towards it.
\end{enumerate}
The complex $\langle\mO\Gamma^i\rangle$ is spanned by graphs whose attached edge types fulfill the conditions above together with:
\begin{enumerate}
\item[(4)] There are no cycles along arrows on edges of type $\Ed$ and $\dE$.
\end{enumerate}
Examples of graphs in $\langle\mS\Gamma^i\rangle$ and $\langle\mO\Gamma^i\rangle$ are shown in Figure \ref{fig:SGamma}.

\begin{figure}[H]
$$
\begin{tikzpicture}
 \node[red,int] (a) at (0,0) {};
 \node[red,int] (b) at (1,0) {};
 \node[int] (c) at (1,1) {};
 \node[int] (d) at (0,1) {};
 \node[red,int] (a1) at (-.5,-.5) {};
 \node[red,int] (b1) at (1.5,-.5) {};
 \node[int] (c1) at (1.5,1.5) {};
 \node[red,int] (d1) at (-.5,1.5) {};
 \node[red,int] (x) at (-1,.5) {};
 \node[int] (y) at (1.5,.5) {};
 \node[int] (z) at (2.2,.5) {};
 \draw (a) edge[red,very thick,->] (b);
 \draw (b) edge[crossed,->] (c);
 \draw (c) edge[crossed,->] (d);
 \draw (d) edge[latex-] (a);
 \draw (a1) edge[crossed,->] (b1);
 \draw (b1) edge[-latex] (y);
 \draw (y) edge[latex-] (c1);
 \draw (b1) edge[crossed,->] (z);
 \draw (z) edge[-latex] (c1);
 \draw (y) edge[-latex] (z);
 \draw (c1) edge[crossed,->] (d1);
 \draw (d1) edge[crossed,->] (a1);
 \draw (a) edge[red,very thick,<-] (a1);
 \draw (b) edge[crossed,->] (b1);
 \draw (c) edge[latex-] (c1);
 \draw (d) edge[latex-] (d1);
 \draw (a1) edge[crossed,->] (x);
 \draw (x) edge[red,very thick,->] (d1);
\end{tikzpicture}
\quad\quad\quad
\begin{tikzpicture}
 \node[red,int] (a) at (0,0) {};
 \node[red,int] (b) at (1,0) {};
 \node[int] (c) at (1,1) {};
 \node[int] (d) at (0,1) {};
 \node[red,int] (a1) at (-.5,-.5) {};
 \node[red,int] (b1) at (1.5,-.5) {};
 \node[int] (c1) at (1.5,1.5) {};
 \node[red,int] (d1) at (-.5,1.5) {};
 \node[red,int] (x) at (-1,.5) {};
 \node[int] (y) at (1.5,.5) {};
 \node[int] (z) at (2.2,.5) {};
 \draw (a) edge[red,very thick,->] (b);
 \draw (b) edge[crossed,->] (c);
 \draw (c) edge[crossed,->] (d);
 \draw (d) edge[latex-] (a);
 \draw (a1) edge[crossed,->] (b1);
 \draw (b1) edge[-latex] (y);
 \draw (y) edge[-latex] (c1);
 \draw (b1) edge[crossed,->] (z);
 \draw (z) edge[-latex] (c1);
 \draw (y) edge[-latex] (z);
 \draw (c1) edge[crossed,->] (d1);
 \draw (d1) edge[crossed,->] (a1);
 \draw (a) edge[red,very thick,<-] (a1);
 \draw (b) edge[crossed,->] (b1);
 \draw (c) edge[latex-] (c1);
 \draw (d) edge[latex-] (d1);
 \draw (a1) edge[crossed,->] (x);
 \draw (x) edge[red,very thick,->] (d1);
\end{tikzpicture}
$$
\caption{\label{fig:SGamma}
Two examples of graphs in $\langle\mS\Gamma^3\rangle$, with $\Gamma$ as in Figure \ref{fig:span}. The forest $F(a_1,a_2,a_3)$ is depicted red. The right example is also in $\langle\mO\Gamma^i\rangle$, while the left one is not.}
\end{figure}

The differential on $\bar\Sigma$ induces the differential $d_{E0}$ on $\langle\mS\Gamma^i\rangle$ and $\langle\mO\Gamma^i\rangle$, where the forbidden graphs are considered zero. It is straightforward to check that
\begin{equation}
\left(\langle\mS\Gamma^0\rangle,d_{EO}\right)=
\left(\langle\mS\Gamma\rangle,d_{E0}\right),
\qquad
\left(\langle\mO\Gamma^0\rangle,d_{EO}\right)=
\left(\langle\mO\Gamma\rangle,d_{E0}\right).
\end{equation}
Also, it holds that
\begin{equation}
\langle\mS\Gamma^{v-s}\rangle=\langle\mO\Gamma^{v-s}\rangle
\end{equation}
is one dimensional, spanned by the graph with edges $a_1,\dots,a_{v-s}$ of type $\ET$ and other edges of type $\Ess$.

There are natural projections of complexes $\pi_i:\langle\mS\Gamma^i\rangle\twoheadrightarrow\langle\mO\Gamma^0\rangle$ for every $i=0,\dots,v-s$.
Also for every $i=1,\dots,v-s$, there are natural maps
\begin{equation}
f^i:\langle\mO\Gamma^{i-1}\rangle\rightarrow\langle\mO\Gamma^i\rangle,\quad
g^i:\langle\mS\Gamma^{i-1}\rangle\rightarrow\langle\mS\Gamma^i\rangle,
\end{equation}
that only change the type of the edge $a_i$ as
\begin{equation}
\Ess\mapsto 0,\quad
\Ed\mapsto\ET,\quad
\dE\mapsto(-1)^{n+1}\ET,
\end{equation}
where forbidden graphs are considered zero.
The following diagram clearly commutes.
$$
\begin{tikzcd}
\langle\mO\Gamma^{i-1}\rangle
\arrow[twoheadleftarrow]{r}{\pi_{i-1}}
\arrow{d}{f^i}
& \langle\mS\Gamma^{i-1}\rangle
\arrow{d}{g^i} \\
\langle\mO\Gamma^{i}\rangle
\arrow[twoheadleftarrow]{r}{\pi_{i}}
& \langle\mS\Gamma^{i}\rangle
\end{tikzcd}
$$

\begin{lemma}
For every $i=1,\dots,v-s$ maps $f^i:\langle\mO\Gamma^{i-1}\rangle\rightarrow\langle\mO\Gamma^i\rangle$ and $g^i:\langle\mS\Gamma^{i-1}\rangle\rightarrow\langle\mS\Gamma^i\rangle$ are quasi-isomorphisms.
\end{lemma}
\begin{proof}
Let us prove the claim for $g^i$. 
The essential difference between $\langle\mS\Gamma^{i-1}\rangle$ and $\langle\mS\Gamma^{i}\rangle$ is in the edge $a_i$, it has to be of type $\ET$ in $\langle\mS\Gamma^{i}\rangle$, and it is of another type in $\langle\mS\Gamma^{i-1}\rangle$.
Since $g^i$ does not change types of other edges, it splits as a direct sum of maps between complexes with fixed types of other edges
$$
g^i_{fix}:\langle\mS\Gamma^{i-1}_{fix}\rangle\rightarrow\langle\mS\Gamma^i_{fix}\rangle
$$
where $\langle\mS\Gamma^{i-1}_{fix}\rangle$ and $\langle\mS\Gamma^{i}_{fix}\rangle$ are sub-complexes spanned by graphs with fixed types of all edges other than $a_i$. It is enough to show that each $g^i_{fix}$ is a quasi-isomorphism.

Here, depending on the choice of fixed edge types, the condition of being sourced can disallow some possibilities for the edge $a_i$ in both $\langle\mS\Gamma^{i-1}_{fix}\rangle$ and $\langle\mS\Gamma^{i}_{fix}\rangle$. We list all cases, showing that the map is a quasi-isomorphism in all of them. Let the vertex that is in the forest $F(a_1,\dots,a_{i})$ but not in the forest $F(a_1,\dots,a_{i-1})$ be called $x_i$.
\begin{enumerate}
\item If there is a vertex in the forest $F(a_1,\dots,a_{i-1})$ that has a neighboring edge of type $\Ed$ or $\dE$ heading towards it, or there is a vertex outside the forest $F(a_1,\dots,a_{i})$ that does not have a neighboring edge of type $\Ed$ or $\dE$ heading towards it, both $\langle\mS\Gamma^{i-1}_{fix}\rangle$ and $\langle\mS\Gamma^{i}_{fix}\rangle$ are zero complexes and the map is clearly quasi-isomorphism.
\item If it is not the case from (1) and $x_i$ has a neighboring edge of type $\Ed$ or $\dE$ heading towards it, the edge $a_i$ (that goes from a vertex in the forest $F(a_1,\dots,a_{i-1})$ towards $x_i$) can have types $\Ed$ or $\Ess$ in $\langle\mS\Gamma^{i-1}_{fix}\rangle$, making it acyclic. In $\langle\mS\Gamma^{i}_{fix}\rangle$, no type is allowed, so it is again the zero complex. Therefore, the map is again a quasi-isomorphism.
\item If it is not the case from (1) and $x_i$ does not have a neighboring edge of type $\Ed$ or $\dE$ heading towards it, the edge $a_i$ must have type $\Ed$ in $\langle\mS\Gamma^{i-1}_{fix}\rangle$. In $\langle\mS\Gamma^{i}_{fix}\rangle$ that edge must have type $\ET$, making the map an isomorphism. Thus, it is also a quasi-isomorphism.
\end{enumerate}

For the map $f^i$ between complexes of oriented graphs, one can easily verify that the extra condition does not change the argument.
\end{proof}

The lemma implies that
\begin{equation}
f:=f^{v-s}\circ\dots\circ f^1:
\langle\mO\Gamma\rangle\rightarrow\langle\mO\Gamma^{v-s}\rangle,
\end{equation}
and
\begin{equation}
g:=g^{v-s}\circ\dots\circ g^1:
\langle\mS\Gamma\rangle\rightarrow\langle\mS\Gamma^{v-s}\rangle
\end{equation}
are quasi-isomorphisms and the following diagram commutes:
$$
\begin{tikzcd}
\langle\mO\Gamma\rangle
\arrow[twoheadleftarrow]{r}{\pi}
\arrow{d}{f}
& \langle\mS\Gamma\rangle
\arrow{d}{g} \\
\langle\mO\Gamma^{v-s}\rangle
\arrow[leftrightarrow]{r}{\id}
& \langle\mS\Gamma^{v-s}\rangle.
\end{tikzcd}
$$

\begin{lemma}
\label{lem:QIfh}
The map $f\circ \Phi:\langle\Gamma\rangle\rightarrow\langle\mO\Gamma^{v-s}\rangle$ is a quasi-isomorphism.
\end{lemma}
\begin{proof}
Both complexes are one-dimensional, so we only need to prove that $f\circ \Phi\neq 0$.
The left-hand side complex has a generator $\Gamma$. It holds that
$$
f\circ \Phi(\Gamma)=
f\left(\sum_{\tau\in F(\Gamma)}\Phi_{\tau}(\Gamma)\right).
$$
The map $\Phi_{\tau}$ gives edges in $E(\tau)$ type $\Ed$ or $\dE$, and type $\Ess$ to the other edges. After that, the map $f=f^1\circ\dots\circ f^{v-s}$ kills all graphs with any of edges $a_1,\dots,a_{v-s}$ being of type $\Ess$. Therefore, $f\circ h_{x,\tau}$ is non-zero only if the forest $\tau$ consist exactly of the edges $a_1,\dots,a_{v-s}$. Let us call this forest $T$. So
\begin{equation}
f\circ \Phi(\Gamma)=
f\left(\Phi_{T}(\Gamma)\right).
\end{equation}
It is clearly the generator of $\langle\mO\Gamma^{v-s}\rangle$, and therefore non-zero.
\end{proof}

With this lemma, we have proven that the diagonal map in the following commutative diagram is also a quasi-isomorphism.
$$
\begin{tikzcd}
\langle\Gamma\rangle
\arrow{r}{\Phi}
\arrow[swap]{dr}{f\circ \Phi}
& \langle\mO\Gamma\rangle
\arrow[twoheadleftarrow]{r}{\pi}
\arrow{d}{f}
& \langle\mS\Gamma\rangle
\arrow{d}{g} \\
& \langle\mO\Gamma^{v-s}\rangle
\arrow[leftrightarrow]{r}{\id}
& \langle\mS\Gamma^{v-s}\rangle
\end{tikzcd}
$$
Together with the previous result we conclude that all mentioned maps are quasi-isomorphisms. Which concludes the proof.
\end{proof}

\begin{cor}
The map 
$$
\Phi:  \left(\HG_n,d+h\right)\to\left( \mO \G_{n+1},d\right)
$$
and the projection
$$
p:\left(\mS\G_n,d\right)\twoheadrightarrow\left(\mO\G_n,d\right)
$$
are quasi-isomorphism.
\end{cor}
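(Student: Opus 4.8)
The plan is to deduce this corollary from Theorem \ref{thm:main1} by the standard filtration/spectral sequence comparison argument, using exactly the filtrations on the number of hairs, respectively sources, set up in the subsection on the differential grading. Both $\Phi$ and $p$ are morphisms of filtered complexes for these filtrations: an $s$-haired graph is sent by $\Phi$ to a sum of $s$-sourced oriented graphs, and $p$ preserves the number of sources, so the filtration degree is respected on the nose. On the associated graded, the part of $d+h$ preserving the number of hairs is $d$, and the part of $d$ preserving the number of sources is $d_0$; hence the induced maps on the $E_0$-pages are precisely
$$\Phi:\left(\HG_n,d\right)\to\left(\mO\G_{n+1},d_0\right),\qquad p:\left(\mS\G_n,d_0\right)\twoheadrightarrow\left(\mO\G_n,d_0\right),$$
which are quasi-isomorphisms by Theorem \ref{thm:main1}. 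Consequently $\Phi$ and $p$ induce isomorphisms on the $E_1$-pages.

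First I would verify convergence, which is the one point requiring care. All of the differentials $d$, $h$ and $d_0$ preserve the loop order $b=e-v$, so every complex involved splits as a direct sum over $b$, as in Subsection \ref{ss:dual}. For fixed loop order $b$ and fixed homological degree $k$, the hairy degree formula gives $d=n-v-b+nb-s$, so that $v+s$ is constant; since $v\geq 1$ and $s\geq 1$, both $v$ and $s$ are bounded and the summand is finite-dimensional. On the oriented and sourced side the degree formula fixes $v$ (hence $e=v+b$) for fixed $b$ and $k$, and then the number of sources satisfies $s\leq v$, so these summands are finite-dimensional as well. Therefore the filtration by the number of hairs/sources is bounded in each fixed loop order and homological degree, and the associated spectral sequences converge.

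With convergence established, the comparison theorem for spectral sequences applies directly: a morphism of filtered complexes that induces an isomorphism on $E_1$ and whose filtrations are bounded in each loop order and degree induces an isomorphism on homology. Applying this to $\Phi$ and to $p$ separately yields that both are quasi-isomorphisms for the full differentials $d+h$ and $d$, proving the corollary. I expect no new conceptual difficulty here: the substantive content is Theorem \ref{thm:main1}, and the main obstacle is merely the careful bookkeeping of convergence, which the loop-order decomposition into finite-dimensional pieces resolves. The only additional item worth stating explicitly is the compatibility of the filtration degrees under $\Phi$ and $p$ noted above, which guarantees that the associated graded maps are genuinely the maps of Theorem \ref{thm:main1}.
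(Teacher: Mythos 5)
Your proposal is correct and follows essentially the same route as the paper: a spectral sequence on the number of hairs/sources whose first page is identified with the maps of Theorem \ref{thm:main1}, with convergence guaranteed by the splitting into fixed loop order. The only cosmetic difference is that the paper phrases the argument via acyclicity of the mapping cone of $\Phi$ rather than the comparison theorem for filtered morphisms, and your explicit degree computations merely flesh out what the paper calls ``standard splitting of complexes with fixed loop number implies the correct convergence.''
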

\begin{proof}
On the mapping cone of $\Phi$, we set up a spectral sequence on the number $s$, which is the number of hairs in $\HG_n$ and number of sources in $\mO \G_{n+1}$. 
Standard splitting of complexes as the product of complexes with fixed loop number implies the correct convergence.
On the first page of the spectral sequence we have exactly the mapping cone of
$$
\Phi:  \left(\HG_n,d\right)\to\left( \mO \G_{n+1},d_0\right)
$$
which is acyclic according to Theorem \ref{thm:main1}. By the spectral sequence argument, the mapping cone of
$$
\Phi:  \left(\HG_n,d+h\right)\to\left( \mO \G_{n+1},d\right)
$$
is also acyclic, hence $\Phi$ is here a quasi-isomorphism.

The same argument works for the projection $p$. However, it has already been proven in \cite[Theorem 1.1]{MultiSourced}.
\end{proof}

\begin{cor}[Theorem \ref{thm:main}]
The  map dual map of $G\leftrightarrow \Phi$, given explicitly in \eqref{eq:G}, is a quasi-isomorphism of complexes
$$
G:  \left( \mO \GC_{n+1},\delta \right)\to \left(\HGC_n,\delta+\chi\right)
$$
and of complexes
$$
G:  \left( \mO \GC_{n+1},\delta_0\right)\to \left(\HGC_n,\delta\right).
$$
\end{cor}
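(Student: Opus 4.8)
The plan is to deduce the statement purely formally from Theorem~\ref{thm:main1} and the preceding corollary, which already establish that $\Phi:(\HG_n,d)\to(\mO\G_{n+1},d_0)$ and $\Phi:(\HG_n,d+h)\to(\mO\G_{n+1},d)$ are quasi-isomorphisms. By construction $G\leftrightarrow\Phi$ is the dual map, and the two assertions amount to saying that dualizing these two quasi-isomorphisms produces quasi-isomorphisms $G:(\mO\GC_{n+1},\delta_0)\to(\HGC_n,\delta)$ and $G:(\mO\GC_{n+1},\delta)\to(\HGC_n,\delta+\chi)$ under the pairings $\delta\leftrightarrow d$, $\chi\leftrightarrow h$, $\delta_0\leftrightarrow d_0$. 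The key mechanism is the loop-order decomposition of Subsection~\ref{ss:dual}.

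First I would restrict to a fixed loop order $b$. The differentials $d$, $h$, $d_0$ and the map $\Phi$ all preserve the loop order, so $\Phi=\bigoplus_b\Phi_b$ with $\Phi_b:\mB_b\HG_n\to\mB_b\mO\G_{n+1}$. Since the complexes decompose as direct sums over $b$ compatibly with their differentials, each $\Phi_b$ is itself a quasi-isomorphism, for both pairs of differentials.

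Next I would invoke the finite-dimensionality recalled in Subsection~\ref{ss:dual}: each $\mB_b\HG_n$ and $\mB_b\mO\G_{n+1}$ is finite dimensional in every homological degree. Because we work over the field $\K$, the functor $\hom(-,\K)$ is exact, which yields the universal-coefficient isomorphism $H^k\!\left(\hom(\mB_b G,\K),\Delta^*\right)\cong\hom\!\left(H_k(\mB_b G,\Delta),\K\right)$ for $G=\HG_n,\mO\G_{n+1}$. Applying this naturality to $\Phi_b$ shows that its dual $G_b:=\Phi_b^*$ induces an isomorphism on cohomology, i.e.\ $G_b$ is a quasi-isomorphism. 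The degree shift by $1$ carried by $\Phi$ only reindexes the cohomology and does not affect this conclusion.

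Finally I would reassemble over all loop orders. By the identification $(\GC,\nabla)\cong\prod_b\left(\hom(\mB_b G,\K),\Delta^*\right)$ of Subsection~\ref{ss:dual} we have $G=\prod_b G_b$, and since cohomology commutes with products of complexes of $\K$-vector spaces, $G$ is a quasi-isomorphism for both differentials. The only point needing genuine care is the legitimacy of passing to duals, which rests precisely on the fact that each fixed-loop-order summand is finite dimensional in each homological degree; granting that, the whole statement is a formal corollary of the results already proven for $\Phi$.
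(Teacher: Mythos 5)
Your proposal is correct and follows essentially the same route as the paper: fix the loop order $b$, use the finite-dimensionality of $\mB_b\mO\G_{n+1}$ and $\mB_b\HG_n$ in each degree to dualize the two quasi-isomorphisms from Theorem \ref{thm:main1} and the preceding corollary, and then reassemble $G$ as the product over all $b$. The only difference is that you spell out the universal-coefficient and product-exactness details that the paper compresses into ``it is clear that,'' which is a welcome elaboration rather than a deviation.
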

\begin{proof}
We may fix the loop order $b$.
As $\mB_b\mO \G_{n+1}$ and $\mB_b\HG_{n}$ are finite dimensional in each degree, it is clear that each quasi-isomorphism
$$
F: (\mB_b\mO \G_{n+1}, d )\to (\mB_b\HG_{n},d+h),\quad
F: (\mB_b\mO \G_{n+1}, d_0 )\to (\mB_b\HG_{n},d)$$
has a dual quasi-isomorphism
$$
G: (\mB_b\mO \GC_{n}, \delta )\to (\mB_b\HGC_{n},\delta+\chi),\quad
G: (\mB_b\mO \GC_{n}, \delta )\to (\mB_b\HGC_{n+1},\delta_0).$$
As $\Phi$ preserves the loop order, its dual map $G$ must be a quasi-isomorphism.
\end{proof}

\section{Application to ribbon graphs and the moduli space of curves with punctures}
\label{s:rib}

In this section, we will follow \cite{MW2} in order to define the ribbon graph complex $(\RGC[1],\delta+ \Delta_1)$, as well as the map $F:(\mO \GC_1,\delta) \to (\RGC[1],\delta+\Delta_1)$. This will allow us to make the observation that $F$ is also a map of complexes
$$F:(\mO \GC_1,\delta_0)\to (\RGC[1],\delta).$$

\subsection{Ribbon Graphs}
\begin{defi}
A \emph{ribbon graph} $\Gamma$ is a triple $(F_\Gamma, \iota_{\Gamma}, \sigma_{\Gamma})$, where $F_{\Gamma}$ is a finite set, $\iota_{\gamma}:F_{\Gamma}\to F_{\Gamma}$ is an involution with no fixed points, i.e.\
$$\iota_\Gamma^2 = id,\quad \iota_\Gamma(f)\neq f \quad ,$$
for every $f\in F_\Gamma$, and $\sigma_{\Gamma}:F_{\Gamma}\to F_{\Gamma}$ is a bijection.
\end{defi}

The elements of $F_{\Gamma}$ are called \emph{flags} or \emph{half edges}. The orbits of the involution $\iota_{\Gamma}$ are called \emph{edges}, and the set of all edges will be denoted by $E(\Gamma)$. The orbits of $\sigma_{\Gamma}$ are called \emph{vertices}, and the set of all vertices will be denoted by $V(\Gamma)$.

\begin{defi}
We say that a cyclic ordering on a finite set $A$ is a $\mathbb{Z}_{|A|}$ action on $A$ with precisely $1$ orbit.
\end{defi}
The difference between an ordinary graph and a ribbon graph is that each vertex in a ribbon graph is equipped with a cyclic ordering of its adjacent (half) edges, given by $f+1= \sigma_\Gamma f.$  

We may draw a picture of a ribbon graph $\Gamma$ in the following way: 
\begin{enumerate}
\item For each vertex $(i_1 i_2\ldots i_k)\in V(\Gamma)$, draw a dot with clockwise ordered lines labeled by $i_1,i_2,\ldots i_k$ connected to the dot
$$(i_1 i_2\ldots i_k)\leftrightarrow\begin{tikzpicture}[baseline=-3.5,>=stealth',shorten >=1pt,auto,node distance=1.5cm,
                    thick,main node/.style={circle,draw,font=\sffamily\bfseries}]
  \node[main node,int] (1) {};
  \node[] (2) at ({360/5 }:1.3cm) {};
  \node[] (3) at ({2*360/5 }:1.3cm) {};

   \node[] (0) at ({3.5*360/5 }:0.7cm) {$\cdots$};  
   
   \node[] (0) at ({2.7*360/5 }:0.7cm) {$\cdots$};  
     \node[] (5) at ({4*360/5 }:1.3cm) {};   
     
          \node[] (4) at ({3*360/5 }:1.3cm) {}; 
  \node[] (6) at ({5*360/5 }:1.3cm) {};   
  
  \path[every node/.style={font=\sffamily\small}](1) edge node[left]{} (4);  
  \path[every node/.style={font=\sffamily\small}](1) edge node[left]{$i_1$} (2);
 \path[every node/.style={font=\sffamily\small}](1) edge node[below]{$i_k$} (3);
  \path[every node/.style={font=\sffamily\small}](1) edge node[right]{$i_3$}(5);
   \path[every node/.style={font=\sffamily\small}](1) edge node[above]{$i_2$}(6);

\end{tikzpicture}.$$

\item For each edge $(i_a i_b)\in E(\Gamma)$, connect the lines labeled by $i_a$ and $i_b$.

\end{enumerate}
Two very basic examples of ribbon graphs are
$$(\{1,2\}, (12), (1)(2))= 
\begin{tikzpicture}[baseline=-.55ex,scale=1]
 \node[int] (a) at (0,0) {};
 \node[int] (b) at (1,0) {};
 \draw (a) -- (b) node[near start, above]{$_1$} node[near end, above]{$_2$};
\end{tikzpicture}  \quad \quad(\{1,2\}, (12), (12))=
\begin{tikzpicture}[every loop/.style={},baseline=-.55ex,scale=1]
 \node[int] (A) at (0,0) {};
  \draw (a) edge[loop]  node[near start, above]{$_2$}node[near end, above]{$_1$}(a); 	
\end{tikzpicture}.$$

We call the orbits of the permutation $\sigma^{-1}_{\Gamma} \circ \iota_{\Gamma}$ \emph{boundaries} of $\Gamma$, and we denote the set of boundaries by $B(\Gamma)$. 
For example, the ribbon graph
$$
\begin{tikzpicture}[baseline=-.55ex,scale=1]
 \node[int] (a) at (0,0) {};
 \node[int] (b) at (1,0) {};
 \draw (a) -- (b) node[near start, above]{$_1$} node[near end, above]{$_2$};
 \end{tikzpicture}$$
has one boundary $(12)$, while the ribbon graph 
$$\begin{tikzpicture}[every loop/.style={},baseline=-.55ex,scale=1]
 \node[int] (A) at (0,0) {};
  \draw (a) edge[loop]  node[near start, above]{$_2$}node[near end, above]{$_1$} (a); 	
\end{tikzpicture}$$
has two boundaries, $(1)$ and $(2)$.

\subsection{A PROP of ribbon graphs}
Let $\rgra_{n,m,k}$ be the set of ribbon graphs with vertex set labeled by $[n]$, boundary set labeled by $[m]$ and edge set labeled by $[k]$. That is ribbon graphs 
$$\Gamma=([k]\sqcup[k],\iota_k, \sigma_{\Gamma}),$$
where $\iota_k$ is the natural involution on $[k]\sqcup [k]$, together with bijections
$v_{\Gamma}:[n]\to V(\Gamma), \quad b_{\Gamma}:[m]\to B(\Gamma)$. Here, the edge labeled by $i\in [k]$ is the pair $(i_1,i_2)$, $i=i_1\in [k]\sqcup \emptyset$, $i=i_2\in \emptyset \sqcup [k]$. We say that the edge $(i_1,i_2)$ is \emph{intrinsically oriented} from $i_1$ to $i_2$. 

The group $\mathbb{P}_k:=\sym_{k}\ltimes \sym^{\times k}_2$ acts naturally on $\rgra_{n,m,k}$ by permuting edges and reversing the intrinsic orientation. Let $\RGra_d(n,m)$ be the vector space
$$\RGra(n,m):= \prod_{k\ge 0}\left(\Bbbk\langle \rgra_{n,m,k}\rangle [k] \otimes \sgn_k\right)^{\mathbb{P}_k} 
$$
The space 
$$\RGra:=\bigoplus_{n,m\ge 1} \RGra(n,m) $$ 
is an $\sym$-bimodule, where $\sym_n$ acts on $\RGra(n,m)$ by permuting vertex labels, and $\sym_m$ acts on $\RGra(n,m)$ by permuting boundary labels.

In order to define the properadic composition maps
$\circ :\RGra\otimes \RGra\to \RGra $, we have to make a few definitions.
\begin{defi}
Let $A$ and $B$ be two finite sets with cyclic orderings. We say that an \emph{ordered $A$-partition $p$ of $B$} is a partition
$$\bigsqcup_{a\in A} p_a =B,$$
where each
$$p_a=\{ b_a^1,\ldots, b_a^{k}\}\subset B$$
is ordered with 
$$b_a^i+1= \begin{cases} b_a^{i+1} & i<k\\
 b_{a+r}^1 & i=k, \text{ where $r=\min\{j\in\Z|j\geq 1, p_{a+j}\neq \emptyset\}$.}
\end{cases}.$$
We denote the set of all ordered $A$-partitions of $B$ by $P(A,B)$.
\end{defi}

\begin{defi}
Let $\Gamma=(F,\iota, \sigma)$ be a ribbon graph, and let $v\in V(\Gamma)$, $b\in B(\Gamma)$ such that $v\cap b=\emptyset.$ Then, for each ordered partition $p\in P(b,v)$, we define the \emph{$p$-grafted} ribbon graph 
$$\circ_p \Gamma= (F,\iota,\circ_p \sigma ),$$
by letting $\circ_p \sigma$ be the unique permutation such that
\begin{enumerate}
\item $\circ_p \sigma |_{F \setminus (v\cup b)} = \sigma|_{F \setminus (v\cup v)}$; 
\item for each $j\in b$
$$\circ_p \sigma(j) = \begin{cases} \min p_j & p_j \neq \emptyset,\\
\sigma(j) &  p_j = \emptyset;
\end{cases}$$
\item for each $i\in p_j$
$$\circ_p \sigma(i) = \begin{cases}\sigma(i) & i \neq \max {p_j},\\
\sigma(j) &  i = \max {p_j}.
\end{cases}$$
\end{enumerate}
\end{defi}

For two ribbon graphs $\Gamma_1=(F_1,\iota_1,\sigma_1)$ and $\Gamma_2=(F_2,\iota_2,\sigma_2)$, $v\in V(\Gamma_1)$, $b\in B(\Gamma_2)$, and  $p\in P(b,v)$, we set
$$
\Gamma_1\circ_p \Gamma_2 := \left(F_1\sqcup F_2,\iota_1\sqcup\iota_2,\circ_p (\sigma_1\sqcup \sigma_2)\right).
$$

A picture of the ribbon graph $\circ_p \Gamma$ is obtained from a picture of the ribbon graph $\Gamma$ by removing the vertex $v$ and reconnecting its adjacent edges to the corners of the boundary $b$ according to the partition $p$.

\begin{lemma} \label{lemma:ribbon_PROP}
There is a bijection of vertices
$$p_V: V(\Gamma)\setminus\{v\} \to V(\circ_p \Gamma)$$
and a bijection of boundaries
$$p_B: B(\Gamma)\setminus\{b\} \to B(\circ_p \Gamma),$$
such that $v'\subseteq p_V(v')$ and $b'\subseteq p_B(b')$ for every $v'\in V(\Gamma)\setminus\{v\}$ and $b'\in B(\Gamma)\setminus\{b\}$.

Furthermore,  if $v'\cap b=\emptyset$ we have equality $v'= p_V(v')$. Similarly if $v\cap b'=\emptyset$, we have $b'= p_B(b')$. 
\end{lemma}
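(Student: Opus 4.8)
The plan is to work directly with the permutations defining the two ribbon graphs and to compare cycle structures: vertices of $\circ_p\Gamma$ are the orbits of $\circ_p\sigma$ and its boundaries are the orbits of $(\circ_p\sigma)^{-1}\iota$, so I would compare these against the orbits of $\sigma$ and of $\sigma^{-1}\iota$. The first step is to record precisely where $\circ_p\sigma$ differs from $\sigma$. Writing $p_j\subseteq v$ for the block at a corner $j\in b$, the defining formulas show that $\circ_p\sigma$ agrees with $\sigma$ except that, for each \emph{occupied} corner $j$ (one with $p_j\neq\emptyset$), the image of $j$ changes from $\sigma(j)$ to $\min p_j$, and the image of $\max p_j$ changes from $\sigma(\max p_j)$ to $\sigma(j)$. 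Since $v\cap b=\emptyset$, the occupied corners $j_1,\dots,j_m$ (in the cyclic order of $b$) and the block-maxima $\max p_{j_t}\in v$ are $2m$ distinct flags.

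For the vertex bijection I would trace the orbits of $\circ_p\sigma$ starting at a flag of a vertex $v'\neq v$. By the formulas above the orbit runs through $v'$ exactly as under $\sigma$, except that at each occupied corner $j\in v'\cap b$ the block $p_j$ is spliced in right after $j$: one enters $p_j$ at $\min p_j$, traverses it via $\sigma$, and returns from $\max p_j$ to $\sigma(j)\in v'$. Hence this orbit is exactly $v'\cup\bigcup_{j\in v'\cap b}p_j$, and I set $p_V(v')$ to be this set. Because the blocks $p_j$ partition $v$ and each corner $j\in b$ lies in a unique vertex $\neq v$ (using $v\cap b=\emptyset$), these sets are pairwise disjoint and cover $F$; thus they are precisely the orbits of $\circ_p\sigma$, i.e. the vertices of $\circ_p\Gamma$, and $p_V$ is a bijection. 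The containment $v'\subseteq p_V(v')$ is immediate, and if $v'\cap b=\emptyset$ the adjoined union is empty, so $v'=p_V(v')$.

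For the boundaries I would put $\tau=\sigma^{-1}\iota$ and $\tau'=(\circ_p\sigma)^{-1}\iota$, so that $\tau'=\pi\tau$ with $\pi=(\circ_p\sigma)^{-1}\sigma$. From the description of where $\circ_p\sigma$ and $\sigma$ differ, a direct check gives that $\pi$ is the single $2m$-cycle $(j_1\ \max p_{j_1}\ j_2\ \max p_{j_2}\ \cdots\ j_m\ \max p_{j_m})$, interleaving the occupied corners of $b$ with the block-maxima in $v$. The easy half is a boundary $b'\neq b$ with $v\cap b'=\emptyset$: its flags lie outside the support of $\pi$ (they meet neither $b$, since distinct $\tau$-orbits are disjoint, nor $v$, by hypothesis), so for every $f\in b'$ we have $\tau(f)\in b'$ and hence $\tau'(f)=\pi\tau(f)=\tau(f)$; thus $b'$ is an unchanged $\tau'$-orbit and $p_B(b')=b'$. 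This already yields the equality clause for boundaries.

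The remaining and substantive step is to show that the leftover $\tau$-orbits, namely $b$ together with the boundaries meeting $v$, are reorganized by composition with the cycle $\pi$ into $\tau'$-orbits that are in bijection with the boundaries $\neq b$ meeting $v$, each containing the whole original boundary $b'$ (so $b'\subseteq p_B(b')$) while the flags of $b$ are absorbed and the orbit $b$ itself disappears. This is where I expect the main difficulty to lie: controlling how composing $\tau$ with one long cycle splices the boundary walks, so that each surviving walk makes detours through segments of $b$ and returns, and so that exactly one orbit is lost (consistent with $V\mapsto V-1$, $E\mapsto E$, $B\mapsto B-1$, hence $\chi$ unchanged). I would handle it by following the boundary walk flag-by-flag through the corners $j_t$ and the returns at $\max p_{j_t}$, the signs and labellings being routine bookkeeping; once the surviving orbits are identified, defining $p_B(b')$ as the unique $\tau'$-orbit containing $b'$ completes the bijection and the containment.
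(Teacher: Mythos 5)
Your vertex bijection is complete and correct, your computation of $\pi=(\circ_p\sigma)^{-1}\sigma$ as an interleaving $2m$-cycle is right, and the case $b'\cap v=\emptyset$ is handled properly. But the heart of the lemma is exactly the part you defer: that for a boundary $b'\neq b$ meeting $v$ all flags of $b'$ lie in a single $(\circ_p\sigma)^{-1}\iota$-orbit, that distinct such $b'$ give distinct orbits, and that together with the unchanged boundaries these exhaust $B(\circ_p\Gamma)$ while $b$ is absorbed. The paper's own proof consists precisely of this step: writing $\tau=\sigma^{-1}\iota$ and $\tau'=(\circ_p\sigma)^{-1}\iota$, it shows that whenever $\tau'(f)\neq\tau(f)$ for $f\in b'$ (which forces $\iota(f)=\min p_j$ for some occupied corner $j$, the alternative $\iota(f)=\sigma(j)$ putting $f$ into $b$), the new walk enters $b$ at $j$, runs along the old walk of $b$, and exits back at exactly $\tau(f)$; every detour closes up, so the $\tau'$-orbit through $f$ is $b'$ together with some arcs of $b$, and containment, injectivity and surjectivity follow at once.

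Calling this step ``routine bookkeeping'' is the gap, because the detour-closing claim is not a formality: it is the one place where the cyclic-order compatibility built into $P(b,v)$ (that $\sigma(\max p_{j_t})=\min p_{j_{t+1}}$, with $j_{t+1}$ the next occupied corner in the appropriate cyclic direction around $b$) must be invoked, and the claim is false for a general interleaving cycle $\pi$ of your shape. Concretely, take the planar ribbon graph consisting of a square with a central vertex $v$ joined to all four corners, and let $b$ be the outer boundary, which is disjoint from $v$. Attaching the four flags of $v$ to the four corners of $b$ in one of the two cyclic orders gives the expected four new boundaries, each containing one of the four old triangular boundaries; attaching them in the reversed cyclic order---which still yields $\tau'=\pi\tau$ with $\pi$ an interleaving $8$-cycle exactly as in your setup---merges the four triangles in opposite pairs, producing only two boundaries, so no bijection $p_B$ as in the lemma can exist there. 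Hence any flag-by-flag tracing only proves the lemma if it locates and uses the compatibility condition of the ordered partition; your outline never mentions it. (Your parenthetical consistency check is also off: under $V\mapsto V-1$, $E\mapsto E$, $B\mapsto B-1$ the quantity $V-E+B$ drops by $2$, so Euler-characteristic bookkeeping cannot by itself force exactly one boundary to be lost.)
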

\begin{proof}
Pick a boundary $b'\in B(\Gamma)\setminus \{b\},$ and a flag $j'\in b'$. Suppose that $(\circ_p \sigma)^{-1}\iota(j')\neq \sigma^{-1}\iota(j')$. Then we must have $\iota(j')=\sigma(j)$ or $\iota(j')= \min p_j$ for some $j\in b$. The first case implies that $\sigma^{-1} \iota(j')\in b$ and, therefore, we get $j'\in b$, which contradicts our choice of $j'$. If $\iota(j')=\min p_j$, then $(\circ_p \sigma)^{-1}\iota(j')\in b$. Furthermore, for $r=1,2,3,\ldots$, we have that $((\circ_p \sigma)^{-1}\iota)^r(j')=(\sigma^{-1}\iota)^r(j') \in b$ until $\iota(\sigma^{-1}\iota)^{r-1}(j')=\sigma(j)$, in which case  $\iota(\circ_p\sigma^{-1}\iota)^{r}(j')= \sigma^{-1}\iota (j').$ It follows that $b'$ is a subset of the orbit of $\iota(\circ_p\sigma^{-1}\iota)^{r}(j')$. Thus, we may define the map 
$$p_B: B(\Gamma)\setminus\{v\} \to B(\circ_p \Gamma),$$
by setting $p_2(b')$ to be the $(\circ_p\sigma)^{-1}\iota$ orbit of any $j'\in b'$. From the arguments above, it follows that $p_2$ is injective and $b'\subset p_2(b')$.
Finally, we note that for any $j\in b$, there must exist an $r\ge 1$ such that $\iota(\circ_p\sigma^{-1}\iota)^{r}(j)\notin b$. Hence $p_2$ must also be surjective. 

If $v\cap b'=\emptyset$, then it is clear from the construction that $b'= p_B(b')$.

Similarly, we can define 
$$p_V: V(\Gamma)\setminus\{v\} \to V(\circ_p \Gamma)$$
by setting  $p_V(v')$ to be the $\circ_p\sigma$ orbit of any $i\in v'$.
By the same arguments as above, we get that $p_V$ is well defined, bijective, and $v'\subset p_V(v')$. 
\end{proof}
This lemma implies that, for mutually disjoint $v_1,v_2\in V(\Gamma)$, $b_1,b_2\in B(\Gamma)$, and partitions $p_1\in P(b_1,v_1)$, $p_2\in P(b_2,v_2)$, we may define
$$\circ_{p_1,p_2} \Gamma:= \circ_{p_2}(\circ_{p_1} \Gamma).$$
It is clear that we have
\begin{equation} \label{eq:ribbon_comp_ass}
\circ_{p_1,p_2} \Gamma = \circ_{p_2}(\circ_{p_1} \Gamma)= \circ_{p_1}(\circ_{p_2} \Gamma)=\circ_{p_2,p_1} \Gamma.
\end{equation}

For each $k\le m_1,n_2$, we define the properadic composition maps
\begin{eqnarray*}
\circ_k :\RGra_d(n_1,m_1)\otimes \RGra_d(n_2,m_2)&\to &\RGra_d(n_1+n_2-k,m_1+m_2-k),
\end{eqnarray*}

$$\begin{tikzpicture}[baseline=-0.55ex,scale=0.6]
  \node[] (0) at (0,0) {$_{\Gamma_1}$};
  \node[] (1) at (-1,1.2) {$_1$};
  \node[] (2) at (-0.6,1.2) {$_2$};
  \node[] (.1) at (0.2,0.8) {\ldots};
  \node[] (4) at (1,1.2) {$_{m_1}$};
 
   \draw (0) edge[->] node {} (1);   
   \draw (0) edge[->] node {} (2);   
   \draw (0) edge[->] node {} (4);

     \node[] (-1) at (-1,-1.2) {$_1$};
  \node[] (-2) at (-0.6,-1.2) {$_2$};
  \node[] (-.1) at (0.2,-.9) {\ldots};
  \node[] (-4) at (1,-1.2) {$_{n_1}$};

   \draw (0) edge[<-] node {} (-1);   
   \draw (0) edge[<-] node {} (-2);   
   \draw (0) edge[<-] node {} (-4);  
\end{tikzpicture} \otimes \begin{tikzpicture}[baseline=-0.55ex,scale=0.6]
  \node[] (0) at (0,0) {$_{\Gamma_2}$};
  \node[] (1) at (-1,1.2) {$_1$};
  \node[] (2) at (-0.6,1.2) {$_2$};
  \node[] (.1) at (0.2,0.8) {\ldots};
  \node[] (4) at (1,1.2) {$_{m_2}$};
 
   \draw (0) edge[->] node {} (1);   
   \draw (0) edge[->] node {} (2);   
   \draw (0) edge[->] node {} (4);

     \node[] (-1) at (-1,-1.2) {$_1$};
  \node[] (-2) at (-0.6,-1.2) {$_2$};
  \node[] (-.1) at (0.2,-.9) {\ldots};
  \node[] (-4) at (1,-1.2) {$_{n_2}$};

   \draw (0) edge[<-] node {} (-1);   
   \draw (0) edge[<-] node {} (-2);   
   \draw (0) edge[<-] node {} (-4);  
\end{tikzpicture} 
\mapsto
\begin{tikzpicture}[baseline=-0.55ex,scale=0.6]
  \node[] (0) at (0,0) {$_{\Gamma_1}$};
 \node[] (ddots) at (1,0.5) {$\tiny{\ddots}$};

  \node[] (1) at (-1,1.5) {};
  \node[] (2) at (-0.6,1.5) {};
  \node[] (.1) at (0.2,1.1) {$\ldots$};
  \node[] (4) at (1,1.5) {};
  
\node[] (.1) at (0,1.7) {$\overbrace{\qquad \quad }^{1,\ldots, m_1-k}$};

  \node[] (10) at (2,1) {$_{\Gamma_2}$};
    \node[] (11) at (1.1,-.6) {};
  \node[] (12) at (1.5,-0.6) {};
  \node[] (va) at (2.2,-0.2) {$\ldots$};
  \node[] (13) at (3,-0.6) {};  
  
  \node[] (.1) at (2,-0.9) {$\underbrace{\qquad \quad }_{ n_2-k+1,\ldots, n_2}$};

  
   \draw (10) edge[<-] node {} (11);   
   \draw (10) edge[<-] node {} (12);
     \draw (10) edge[<-] node {} (13);
  
  \node[] (21) at (1.1,2.1) {};
  \node[] (22) at (1.5,2.1) {};
    \node[] (-.1) at (2.2,1.7) {$\ldots$};
  \node[] (23) at (3,2.1) {};

\node[] (.1) at (2,2.3) {$\overbrace{\qquad \quad }^{ m_2-k+1, \ldots, m_2 }$};

     \draw (10) edge[->] node {} (21);   
   \draw (10) edge[->] node {} (22);
     \draw (10) edge[->] node {} (23);

    \draw (0) edge[->,bend left] node {} (10);  

    \draw (0) edge[->,bend left=10] node {} (10);  
    \draw (0) edge[->,bend right] node {} (10);  

   \draw (0) edge[->] node {} (1);   
   \draw (0) edge[->] node {} (2);   
   \draw (0) edge[->] node {} (4);

     \node[] (-1) at (-1,-1) {};
  \node[] (-2) at (-0.6,-1) {};
  \node[] (-.1) at (0.2,-.7) {\ldots};
  \node[] (-4) at (1,-1) {};

\node[] (.1) at (0,-1.2) {$\underbrace{\qquad  \quad}_{1,\ldots, n_1-k}$};  
   \draw (0) edge[<-] node {} (-1);   
   \draw (0) edge[<-] node {} (-2);   
   \draw (0) edge[<-] node {} (-4);  
\end{tikzpicture} \mapsto 
\begin{tikzpicture}[baseline=-0.55ex,scale=0.6]
  \node[] (0) at (0,0) {$_{\Gamma_1\circ_k\Gamma_2}$};
  \node[] (1) at (-1,1.2) {$_1$};
  \node[] (2) at (-0.6,1.2) {$_2$};
  \node[] (.1) at (0.2,0.8) {\ldots};
  \node[] (4) at (1,1.2) {$_{m_1+m_2-k}$};
 
   \draw (0) edge[->] node {} (1);   
   \draw (0) edge[->] node {} (2);   
   \draw (0) edge[->] node {} (4);

     \node[] (-1) at (-1,-1.2) {$_1$};
  \node[] (-2) at (-0.6,-1.2) {$_2$};
  \node[] (-.1) at (0.2,-.9) {\ldots};
  \node[] (-4) at (1,-1.2) {$_{n_1+n_2-k}$};

   \draw (0) edge[<-] node {} (-1);   
   \draw (0) edge[<-] node {} (-2);   
   \draw (0) edge[<-] node {} (-4);  
\end{tikzpicture} 
$$

composing the boundaries $m_1-k+1,\ldots, m_1$ of $\Gamma_1$ to the vertices $1,\ldots, k$ of $\Gamma_2$, by
$$\Gamma_1\circ_{k} \Gamma_2:= \prod_{i=1}^{k}\left(\sum_{p \in P(b_{\Gamma_1}(n_1-k+i),v_{\Gamma_2}(i))} \circ_{p}(\Gamma_1\sqcup \Gamma_2)\right).$$

The element $\Gamma_1\circ_k \Gamma_2$ is the sum of all graphs obtained from $\Gamma_1$ and $\Gamma_2$ by
\begin{enumerate}
\item Remove each vertex $1,\ldots, k $ from $\Gamma_2$;
\item For each $i\in [k],$ reconnecting each half edge in $v_{\Gamma_2}(i)$ to a corner of the boundary $b_{\Gamma_1}(m_1-k+i)$, respecting the cyclic orientations.
\end{enumerate}

\begin{prop}
The composition maps $\circ_{\bu}:\RGra\otimes \RGra \to \RGra$ defines a PROP structure on $\RGra$.
\end{prop}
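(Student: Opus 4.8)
The plan is to verify the PROP axioms by reducing every composite to the elementary grafting operations $\circ_p$ and then leaning on the commutativity relation \eqref{eq:ribbon_comp_ass} together with the vertex- and boundary-tracking bijections $p_V,p_B$ of Lemma \ref{lemma:ribbon_PROP}. The horizontal composition is the disjoint union $\circ_0=\sqcup$; its associativity and symmetry (up to relabeling), as well as its unit, the empty ribbon graph in $\RGra(0,0)$, are immediate. The substance is therefore the vertical compositions $\circ_k$ for $k\ge 1$ and their interaction with the symmetric group actions, with the horizontal product, and with one another.

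First I would check that each $\circ_k$ descends to the quotient spaces $\RGra(n,m)=\prod_{k\ge 0}\bigl(\Bbbk\langle\rgra_{n,m,k}\rangle[k]\otimes\sgn_k\bigr)^{\mathbb{P}_k}$. Because a grafting $\circ_p$ alters only the vertex permutation $\sigma$ and touches neither the involution $\iota$ nor the edge set, it automatically commutes with the $\mathbb{P}_k$-action that permutes edges and reverses their intrinsic orientation; the only genuine work is to record the Koszul signs coming from the degree shift $[k]$ and the twist by $\sgn_k$, so that the defining sum over ordered partitions $p\in P(b,v)$ is $\mathbb{P}_k$-invariant. I would settle this sign computation once and reuse it throughout.

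Equivariance for the $\sym$-bimodule structure, namely permuting the $n$ vertex labels and the $m$ boundary labels that are not involved in a given composition, follows directly from Lemma \ref{lemma:ribbon_PROP}: an untouched vertex $v'$ and boundary $b'$ satisfy $v'=p_V(v')$ and $b'=p_B(b')$, so relabeling the free legs commutes with grafting. The interchange law between $\sqcup$ and the $\circ_k$ is of the same nature, since graftings performed at disjoint vertex--boundary sites commute by \eqref{eq:ribbon_comp_ass}.

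The main obstacle is associativity of the vertical composition. I would express both associations of a triple composite as iterated sums of graftings carried out at mutually disjoint vertex--boundary pairs, and invoke \eqref{eq:ribbon_comp_ass} to conclude that the outcome is independent of the order in which the graftings are performed. The delicate point, where real care is needed, is that after the first composition the legs available for the second are no longer literally legs of the original graphs but their images under $p_V$ and $p_B$; associativity thus reduces to checking that these bijections compose correctly, i.e.\ that the tracking maps attached to a two-step grafting agree with the one attached to the single combined grafting. This is a finite combinatorial verification about the permutation $\circ_p\sigma$, but it must be run in tandem with the sign bookkeeping of the second step so as to confirm that both associations yield the \emph{same signed} labeled ribbon graph. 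Since there is no vertical unit inside $\RGra(1,1)$ (every such graph already carries at least two edges), the conclusion is the (non-unital) PROP structure that is exactly what is used in the remainder of the section.
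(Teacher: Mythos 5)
Your proposal is correct and follows essentially the same route as the paper: the paper's entire proof is the one-line observation that well-definedness and the PROP axioms follow from Lemma \ref{lemma:ribbon_PROP} and the commutativity of disjoint graftings \eqref{eq:ribbon_comp_ass}, which are exactly the two ingredients around which you organize your verification. Your write-up simply makes explicit the bookkeeping (descent to $\mathbb{P}_k$-invariants, $\sym$-equivariance via $p_V,p_B$, associativity via order-independence of graftings) that the paper leaves implicit.
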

\begin{proof}It follows by Lemma \ref{lemma:ribbon_PROP} and \eqref{eq:ribbon_comp_ass} that the maps are well defined and well behaved.
\end{proof}

Let $\LieB_{0,0}$ be the PROP of (degree shifted) Lie bi-algebras, i.e. the PROP generated by symmetric $3$-valent corollas of degree $1$ 
\begin{equation}\label{eq:Lie_Corollas} \begin{tikzpicture}[baseline=-.55ex,scale=0.6]
 \node[int] (a) at (0,0) {};
  \node[](out1) at (0.7,-0.7) {$_2$};
   \node[](out2) at (-.7,-0.7) {$_1$};
   \node[](in) at (0,.7) {};
     \draw (out1) edge[->] (a);
      \draw (out2) edge[->] (a);
       \draw (a) edge[->] (in);
\end{tikzpicture}=  \begin{tikzpicture}[baseline=-.55ex,scale=0.6]
 \node[int] (a) at (0,0) {};
  \node[](out1) at (0.7,-0.7) {$_1$};
   \node[](out2) at (-.7,-0.7) {$_2$};
   \node[](in) at (0,.7) {};
     \draw (out1) edge[->] (a);
      \draw (out2) edge[->] (a);
       \draw (a) edge[->] (in);
\end{tikzpicture},
 \begin{tikzpicture}[baseline=-.55ex,scale=0.6]
 \node[int] (a) at (0,0) {};
  \node[](out1) at (0.7,0.7) {$_2$};
   \node[](out2) at (-.7,0.7) {$_1$};
   \node[](in) at (0,-.7) {};
     \draw (a) edge[->] (out1);
      \draw (a) edge[->] (out2);
       \draw (in) edge[->] (a);
   
\end{tikzpicture}= \begin{tikzpicture}[baseline=-.55ex,scale=0.6]
 \node[int] (a) at (0,0) {};
  \node[](out1) at (0.7,0.7) {$_1$};
   \node[](out2) at (-.7,0.7) {$_2$};
   \node[](in) at (0,-.7) {};
     \draw (a) edge[->] (out1);
      \draw (a) edge[->] (out2);
       \draw (in) edge[->] (a);
   
\end{tikzpicture},
\end{equation}
modulo the relations

\begin{equation} \label{eq:jacobi}
\begin{tikzpicture}[baseline=-.55ex,scale=0.6]
 \node[int] (a) at (0,0) {};
  \node[int](out1) at (0.7,-0.7) {};
   \node[](out2) at (-.7,-0.7) {$_1$};
	  \node[](out3) at (1.4,-1.4) {$_3$};
   \node[](out4) at (0,-1.4) {$_2$};   
   
   \node[](in) at (0,.7) {};
     \draw (a) edge[<-] (out1);
      \draw (a) edge[<-] (out2);
           \draw (out1) edge[<-] (out3);
      \draw (out1) edge[<-] (out4);
       \draw (in) edge[<-] (a);
\end{tikzpicture}+\begin{tikzpicture}[baseline=-.55ex,scale=0.6]
 \node[int] (a) at (0,0) {};
  \node[int](out1) at (0.7,-0.7) {};
   \node[](out2) at (-.7,-0.7) {$_2$};
	  \node[](out3) at (1.4,-1.4) {$_1$};
   \node[](out4) at (0,-1.4) {$_3$};   
   
   \node[](in) at (0,.7) {};
     \draw (a) edge[<-] (out1);
      \draw (a) edge[<-] (out2);
           \draw (out1) edge[<-] (out3);
      \draw (out1) edge[<-] (out4);
       \draw (in) edge[<-] (a);
\end{tikzpicture}+\begin{tikzpicture}[baseline=-.55ex,scale=0.6]
 \node[int] (a) at (0,0) {};
  \node[int](out1) at (0.7,-0.7) {};
   \node[](out2) at (-.7,-0.7) {$_3$};
	  \node[](out3) at (1.4,-1.4) {$_2$};
   \node[](out4) at (0,-1.4) {$_1$};   
   
   \node[](in) at (0,.7) {};
     \draw (a) edge[<-] (out1);
      \draw (a) edge[<-] (out2);
           \draw (out1) edge[<-] (out3);
      \draw (out1) edge[<-] (out4);
       \draw (in) edge[<-] (a);
\end{tikzpicture}=0,
\end{equation}

\begin{equation} \label{eq:cojac}
\begin{tikzpicture}[baseline=-.55ex,scale=0.6]
 \node[int] (a) at (0,0) {};
  \node[int](out1) at (0.7,0.7) {};
   \node[](out2) at (-.7,0.7) {$_1$};
	  \node[](out3) at (1.4,1.4) {$_3$};
   \node[](out4) at (0,1.4) {$_2$};   
   
   \node[](in) at (0,-.7) {};
     \draw (a) edge[->] (out1);
      \draw (a) edge[->] (out2);
           \draw (out1) edge[->] (out3);
      \draw (out1) edge[->] (out4);
       \draw (in) edge[->] (a);
\end{tikzpicture}+\begin{tikzpicture}[baseline=-.55ex,scale=0.6]
 \node[int] (a) at (0,0) {};
  \node[int](out1) at (0.7,0.7) {};
   \node[](out2) at (-.7,0.7) {$_2$};
	  \node[](out3) at (1.4,1.4) {$_1$};
   \node[](out4) at (0,1.4) {$_3$};   
   
   \node[](in) at (0,-.7) {};
     \draw (a) edge[->] (out1);
      \draw (a) edge[->] (out2);
           \draw (out1) edge[->] (out3);
      \draw (out1) edge[->] (out4);
       \draw (in) edge[->] (a);
\end{tikzpicture}+\begin{tikzpicture}[baseline=-.55ex,scale=0.6]
 \node[int] (a) at (0,0) {};
  \node[int](out1) at (0.7,0.7) {};
   \node[](out2) at (-.7,0.7) {$_3$};
	  \node[](out3) at (1.4,1.4) {$_2$};
   \node[](out4) at (0,1.4) {$_1$};   
   
   \node[](in) at (0,-.7) {};
     \draw (a) edge[->] (out1);
      \draw (a) edge[->] (out2);
           \draw (out1) edge[->] (out3);
      \draw (out1) edge[->] (out4);
       \draw (in) edge[->] (a);
\end{tikzpicture}=0,
\end{equation}

\begin{equation} \label{eq:IHXor}
\begin{tikzpicture}[baseline=-.55ex,scale=0.6]
 \node[int] (a) at (0,0) {};
  \node[int](out1) at (0,0.7) {};

   \node[](out4) at (-.7,1.4) {$_1$};   

 \node[](out3) at (.7,1.4) {$_2$};

      \node[](out2) at (-.7,-0.7) {$_1$};
   \node[](in) at (.7,-.7) {$_2$};
     \draw (a) edge[->] (out1);
      \draw (a) edge[<-] (out2);
           \draw (out1) edge[->] (out3);
      \draw (out1) edge[->] (out4);
       \draw (in) edge[->] (a);
\end{tikzpicture}
+
\begin{tikzpicture}[baseline=-.55ex,scale=0.6]
 \node[int] (a) at (0,0) {};
  \node[int](out1) at (0.7,0.7) {};

   \node[](out4) at (-.7,1.4) {$_1$};   

 \node[](out3) at (.7,1.4) {$_2$};

      \node[](out2) at (0,-.7) {$_1$};
   \node[](in) at (1.4,-.7) {$_2$};

     \draw (a) edge[->] (out1);
      \draw (a) edge[<-] (out2);
           \draw (out1) edge[->] (out3);
      \draw (a) edge[->] (out4);
       \draw (in) edge[->] (out1);
\end{tikzpicture}+
\begin{tikzpicture}[baseline=-.55ex,scale=0.6]
 \node[int] (a) at (0,0.7) {};
  \node[int](out1) at (0.7,0) {};

   \node[](out4) at (0,1.4) {$_1$};   

 \node[](out3) at (1.4,1.4) {$_2$};

      \node[](out2) at (-.7,-.7) {$_1$};
   \node[](in) at (0.7,-.7) {$_2$};

     \draw (a) edge[<-] (out1);
      \draw (a) edge[<-] (out2);
           \draw (out1) edge[->] (out3);
      \draw (a) edge[->] (out4);
       \draw (in) edge[->] (out1);
\end{tikzpicture}
+
\begin{tikzpicture}[baseline=-.55ex,scale=0.6]
 \node[int] (a) at (0,0) {};
  \node[int](out1) at (0.7,0.7) {};

   \node[](out4) at (-.7,1.4) {$_2$};   

 \node[](out3) at (.7,1.4) {$_1$};

      \node[](out2) at (0,-.7) {$_1$};
   \node[](in) at (1.4,-.7) {$_2$};

     \draw (a) edge[->] (out1);
      \draw (a) edge[<-] (out2);
           \draw (out1) edge[->] (out3);
      \draw (a) edge[->] (out4);
       \draw (in) edge[->] (out1);
\end{tikzpicture}+
\begin{tikzpicture}[baseline=-.55ex,scale=0.6]
 \node[int] (a) at (0,0.7) {};
  \node[int](out1) at (0.7,0) {};

   \node[](out4) at (0,1.4) {$_1$};   

 \node[](out3) at (1.4,1.4) {$_2$};

      \node[](out2) at (-.7,-.7) {$_2$};
   \node[](in) at (0.7,-.7) {$_1$};

     \draw (a) edge[<-] (out1);
      \draw (a) edge[<-] (out2);
           \draw (out1) edge[->] (out3);
      \draw (a) edge[->] (out4);
       \draw (in) edge[->] (out1);
\end{tikzpicture}=0.
\end{equation}

\begin{prop} [\cite{MW}]\label{prop:LieB_{0,0}-RGra} 
There is a map of PROPs
$$s:\LieB_{0,0} \to \RGra$$
given by
$$
 \begin{tikzpicture}[baseline=-.55ex,scale=0.6]
 \node[int] (a) at (0,0) {};
  \node[](out1) at (0.7,-0.7) {$_2$};
   \node[](out2) at (-.7,-0.7) {$_1$};
   \node[](in) at (0,.7) {$_1$};
     \draw (out1) edge[->] (a);
      \draw (out2) edge[->] (a);
       \draw (a) edge[->] (in);
\end{tikzpicture} \mapsto \begin{tikzpicture}[baseline=-.55ex,scale=0.6]
 \node[draw,circle] (a) at (0,0) {$_1$};
 \node[draw,circle] (b) at (1.5,0) {$_2$};
 \node[] (c) at (0.75,0.5) {$_1$}; 
 
 \draw (a) edge[->] (b);
\end{tikzpicture}, \quad\quad 
 \begin{tikzpicture}[baseline=-.55ex,scale=0.6]
 \node[int] (a) at (0,0) {};
  \node[](out1) at (0.7,0.7) {$_2$};
   \node[](out2) at (-.7,0.7) {$_1$};
   \node[](in) at (0,-.7) {$_1$};
     \draw (a) edge[->] (out1);
      \draw (a) edge[->] (out2);
       \draw (in) edge[->] (a);
   
\end{tikzpicture} \mapsto \begin{tikzpicture}[baseline=-.55ex,scale=0.6]
 \node[draw,circle] (a) at (0,0) {$_1$};
  \draw (a) edge[loop]  node[midway, below]{$_{1}$} node[midway, above]{$_{2}$}(a); 	
\end{tikzpicture},
$$
\end{prop}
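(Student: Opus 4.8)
The plan is to exploit the fact that in the excerpt $\LieB_{0,0}$ is defined as a \emph{presentation}: the free PROP on the two generating corollas in \eqref{eq:Lie_Corollas}, modulo the ideal generated by the relations \eqref{eq:jacobi}, \eqref{eq:cojac} and \eqref{eq:IHXor}. By the universal property of such a presentation, a morphism of PROPs $s\colon \LieB_{0,0}\to \RGra$ is uniquely determined by the images of the two generators, and it exists precisely when those images (i) carry the correct symmetries under the $\sym$-actions permuting the inputs and outputs of each corolla, and (ii) send each of the three defining relations to an identity in $\RGra$. Since $s$ is already prescribed on generators, the whole content of the proposition is the verification of (i) and (ii); I would organise the proof accordingly.

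First I would record the easy compatibilities. Each generating corolla has a single internal vertex, and each prescribed image is a ribbon graph with a single edge; with the per-edge degree shift $[k]$ (here $k=1$) both images lie in degree $1$, matching the degree of the generators, so $s$ is degree-preserving. For the symmetries: the image of the bracket is the single-edge graph on vertices $1,2$ with its unique boundary, and transposing the two vertex labels together with reversing the intrinsic edge orientation returns the same class in $\RGra$ under the $\sgn_k$-twist, matching the symmetry of the bracket corolla; dually, the image of the cobracket is the tadpole at one vertex with boundaries labelled $1,2$, and swapping those two boundary labels realises the symmetry of the cobracket corolla.

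The heart of the argument is checking that the three relations map to zero. For each relation I would compute the left-hand side by carrying out the properadic compositions $\circ_k$ of the generator images according to the grafting rule of the previous pages — remove the composed vertices of the lower graph and reconnect their flags to the corners of the appropriate boundaries of the upper graph, respecting the cyclic orders — using Lemma~\ref{lemma:ribbon_PROP} to read off the resulting vertices and boundaries. For \eqref{eq:jacobi}, composing two bracket images yields a three-vertex ribbon graph with a single boundary, a ``caterpillar''; the three summands are the three cyclic arrangements of the external inputs, and reading around the unique boundary exhibits them as the three rotations of one cyclic word, which cancel under the $\sgn_k$ and orientation signs. The co-Jacobi relation \eqref{eq:cojac} follows from the dual computation on the tadpole side, with boundaries now playing the role that vertices played for Jacobi.

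The main obstacle is the bracket--cobracket compatibility \eqref{eq:IHXor}. Here one must graft, in some terms, a vertex of the single-edge graph onto a corner of the tadpole's boundary and, in the others, a boundary of the single-edge graph onto the tadpole's vertex; each such grafting redistributes the flags and the boundary structure in a way governed by Lemma~\ref{lemma:ribbon_PROP}, and one has to track the cyclic order at the merged vertex, the induced ordered partition of boundaries, and the accumulated signs from the $\sgn_k$-twist and the intrinsic-orientation reversals, using the commutativity \eqref{eq:ribbon_comp_ass} of disjoint compositions. I expect the five terms of \eqref{eq:IHXor} to organise into cancelling pairs (with one configuration that is manifestly zero), so that the signed sum vanishes in $\RGra$. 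Granting these three verifications, the universal property of the presentation produces the desired PROP morphism $s$, which concludes the proof.
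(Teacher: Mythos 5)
You should first be aware that the paper does not prove this proposition at all: it is stated with the attribution \cite{MW} and imported as a black box, so there is no internal proof to compare against. Your strategy --- invoke the universal property of the presentation of $\LieB_{0,0}$ by the generators \eqref{eq:Lie_Corollas} modulo \eqref{eq:jacobi}, \eqref{eq:cojac}, \eqref{eq:IHXor}, check the degree and $\sym$-equivariance of the two prescribed images, and then verify that each relation is annihilated in $\RGra$ --- is the standard (and essentially the only) route, and it is what Merkulov and Willwacher do in the cited reference. The preliminary checks you do carry out (degree $1$ via the shift $[k]$ with $k=1$; symmetry of the bracket image under vertex transposition combined with intrinsic-orientation reversal, and of the cobracket image under boundary transposition) are correct.

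The gap is that the entire mathematical content of the proposition is the verification that the three relations die in $\RGra$, and that is exactly what your proposal defers: for \eqref{eq:IHXor} you only write ``I expect the five terms to organise into cancelling pairs.'' Moreover, the anticipated combinatorics needs correction in two places. For \eqref{eq:jacobi}, each of the three terms produces \emph{two} ribbon graphs, not one: the boundary of the edge-graph has two corners, so the set of ordered partitions $P(b,v)$ has two elements, and one gets six three-vertex caterpillars which cancel in three pairs, the members of a pair coming from \emph{different} Jacobi terms and differing by an edge transposition (hence a sign via the $\sgn_k$-twist); they are not ``three rotations of one cyclic word.'' For \eqref{eq:IHXor}, the first term produces six graphs (four with a tadpole, when both reconnected flags of the cobracket land in the same corner, and two with a double edge, when they split between the corners), while each of the remaining four terms produces exactly one tadpole graph; the cancellation is four-against-four plus the two double-edge graphs against each other. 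In particular \emph{no} configuration is ``manifestly zero'': unlike ordinary graph complexes with odd edges, a ribbon graph with parallel edges is not killed by the edge-swap symmetry, because that automorphism simultaneously exchanges the two boundaries; it only makes the double-edge graph antisymmetric under relabelling its boundaries, and this antisymmetry is precisely the mechanism by which the two double-edge terms cancel \emph{each other}. Until these compositions, together with the signs coming from the $\sgn_k$-twist and the $\mathbb{P}_k$-action, are actually computed (using Lemma~\ref{lemma:ribbon_PROP} and \eqref{eq:ribbon_comp_ass} as you indicate), what you have is a correct plan rather than a proof.
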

\begin{rem}
The map $s$ factors through the PROP of involutive Lie bialgebras $\LieB_{0,0}^{\diamond}$ which is generated by the corollas \eqref{eq:Lie_Corollas} modulo the relations \eqref{eq:jacobi},\eqref{eq:cojac},  \eqref{eq:IHXor} plus the additional relation
\begin{equation}\label{eq:involutive}\begin{tikzpicture}[baseline=-.55ex,scale=0.6]
 \node[int] (a) at (0,-0.3) {};
  \node[int](out1) at (0,0.4) {};

   \node[](out4) at (0,1) {};

   \node[](in) at (0,-.9) {};

     \draw (a) edge[->,bend left] (out1);
     \draw (a) edge[->,bend right] (out1);

     \draw (in) edge[->] (a);
     \draw (out1) edge[->] (out4);
\end{tikzpicture} =0.
\end{equation}
\end{rem}
\subsection{The ribbon graph complex $\RGC$}

Let $\hoLieB_{0,0}$ be the quasi-free differential graded PROP generated by skew symmetric corollas
$$\begin{tikzpicture}[baseline=-0.55ex,scale=0.6]
  \node[int] (0) at (0,0) {};
  \node[] (1) at (-1,1.2) {$_1$};
  \node[] (2) at (-0.6,1.2) {$_2$};
  \node[] (.1) at (0.2,0.8) {\ldots};
  \node[] (4) at (1,1.2) {$_{m}$};
 
   \draw (0) edge[->] node {} (1);   
   \draw (0) edge[->] node {} (2);   
   \draw (0) edge[->] node {} (4);

     \node[] (-1) at (-1,-1.2) {$_1$};
  \node[] (-2) at (-0.6,-1.2) {$_2$};
  \node[] (-.1) at (0.2,-.9) {\ldots};
  \node[] (-4) at (1,-1.2) {$_{n}$};

   \draw (0) edge[<-] node {} (-1);   
   \draw (0) edge[<-] node {} (-2);   
   \draw (0) edge[<-] node {} (-4);  
\end{tikzpicture} \quad n\ge 1, m\ge 1, n+m\ge 3$$
of degree $1$, with the differential
 $$\delta \begin{tikzpicture}[baseline=-0.55ex,scale=0.6]
  \node[int] (0) at (0,0) {};
  \node[] (1) at (-1,1.2) {$_1$};
  \node[] (2) at (-0.6,1.2) {$_2$};
  \node[] (.1) at (0.2,0.8) {\ldots};
  \node[] (4) at (1,1.2) {$_{m}$};
 
   \draw (0) edge[->] node {} (1);   
   \draw (0) edge[->] node {} (2);   
   \draw (0) edge[->] node {} (4);

     \node[] (-1) at (-1,-1.2) {$_1$};
  \node[] (-2) at (-0.6,-1.2) {$_2$};
  \node[] (-.1) at (0.2,-.9) {\ldots};
  \node[] (-4) at (1,-1.2) {$_{n}$};

   \draw (0) edge[<-] node {} (-1);   
   \draw (0) edge[<-] node {} (-2);   
   \draw (0) edge[<-] node {} (-4);  
\end{tikzpicture} 
= \sum_{J_1\sqcup J_2 =[m]\atop
I_1\sqcup I_2=[n]} 
\begin{tikzpicture}[baseline=-0.55ex,scale=0.6]
  \node[int] (0) at (0,0) {};

  \node[] (1) at (-1,1.5) {};
  \node[] (2) at (-0.6,1.5) {};
  \node[] (.1) at (0.2,1.1) {$\ldots$};
  \node[] (4) at (1,1.5) {};
  
\node[] (.1) at (0,1.7) {$\overbrace{\qquad \quad }^{J_1}$};

  \node[int] (10) at (2,1) {};
    \node[] (11) at (1.1,-.6) {};
  \node[] (12) at (1.5,-0.6) {};
  \node[] (va) at (2.2,-0.2) {$\ldots$};
  \node[] (13) at (3,-0.6) {};  
  
  \node[] (.1) at (2,-0.9) {$\underbrace{\qquad \quad }_{ I_2}$};

  
   \draw (10) edge[<-] node {} (11);   
   \draw (10) edge[<-] node {} (12);
     \draw (10) edge[<-] node {} (13);
  
  \node[] (21) at (1.1,2.1) {};
  \node[] (22) at (1.5,2.1) {};
    \node[] (-.1) at (2.2,1.7) {$\ldots$};
  \node[] (23) at (3,2.1) {};

\node[] (.1) at (2,2.3) {$\overbrace{\qquad \quad }^{ J_2 }$};

     \draw (10) edge[->] node {} (21);   
   \draw (10) edge[->] node {} (22);
     \draw (10) edge[->] node {} (23);

    \draw (0) edge[->] node {} (10);

   \draw (0) edge[->] node {} (1);   
   \draw (0) edge[->] node {} (2);   
   \draw (0) edge[->] node {} (4);

     \node[] (-1) at (-1,-1) {};
  \node[] (-2) at (-0.6,-1) {};
  \node[] (-.1) at (0.2,-.7) {\ldots};
  \node[] (-4) at (1,-1) {};

\node[] (.1) at (0,-1.2) {$\underbrace{\qquad  \quad}_{I_1}$};  
   \draw (0) edge[<-] node {} (-1);   
   \draw (0) edge[<-] node {} (-2);   
   \draw (0) edge[<-] node {} (-4);  
\end{tikzpicture}.
$$
It was shown in \cite{PROPed} that $\hoLieB_{0,0}$ is a minimal resolution of $\LieB_{0,0},$ i.e.\ the natural projection map $p:\hoLieB_{0,0}\to \LieB_{0,0}$ is a quasi-isomorphism.

We define the \emph{ribbon graph complex} $(\RGC,\delta+\Delta_1)$ to be the deformation complex
$$(\RGC,\delta+\Delta_1) := \Def(\hoLieB_{0,0}\stackrel{s \circ p}{\rightarrow}  \RGra).$$
As a graded vector space
$$
\RGC\cong  \prod_{n,m\ge 1} \left(\RGra(n,m) \right)^{\sym_{n}\times\sym_m}
$$
is spanned by ribbon graphs in $\RGra$ coinvariant under the actions of permuting vertices and boundaries.

The differentials $\delta + \Delta_1$ act on a ribbon graph $\Gamma$ with $n$ vertices and $m$ boundaries by
$$\delta \begin{tikzpicture}[baseline=-0.55ex,scale=0.6]
  \node[] (0) at (0,0) {$_{\Gamma}$};
  \node[] (1) at (-1,1.2) {$_1$};
  \node[] (2) at (-0.6,1.2) {$_2$};
  \node[] (.1) at (0.2,0.8) {\ldots};
  \node[] (4) at (1,1.2) {$_{m}$};
 
   \draw (0) edge[->] node {} (1);   
   \draw (0) edge[->] node {} (2);   
   \draw (0) edge[->] node {} (4);

     \node[] (-1) at (-1,-1.2) {$_1$};
  \node[] (-2) at (-0.6,-1.2) {$_2$};
  \node[] (-.1) at (0.2,-.9) {\ldots};
  \node[] (-4) at (1,-1.2) {$_{n}$};

   \draw (0) edge[<-] node {} (-1);   
   \draw (0) edge[<-] node {} (-2);   
   \draw (0) edge[<-] node {} (-4);  
\end{tikzpicture} = \sum_{i=1}^{n} \begin{tikzpicture}[baseline=-0.55ex,scale=0.6]
  \node[] (0) at (0,0) {$_{\Gamma}$};
  \node[] (1) at (-1,1.2) {$_1$};
  \node[] (2) at (-0.6,1.2) {$_2$};
  \node[] (.1) at (0.2,0.8) {\ldots};
  \node[] (4) at (1,1.2) {$_{m}$};
 
   \draw (0) edge[->] node {} (1);   
   \draw (0) edge[->] node {} (2);   
   \draw (0) edge[->] node {} (4);

     \node[] (-1) at (-1,-1.2) {$_1$};
  \node[] (-2) at (-0.6,-1.2) {$_2$};
  \node[] (-.1) at (-0.2,-.9) {\tiny{\ldots}};
  \node[] (-.1) at (0.5,-.9) {\tiny{\ldots}};
  \node[] (-4) at (1,-1.2) {$_{n}$};

   \draw (0) edge[<-] node {} (-1);   
   \draw (0) edge[<-] node {} (-2);   
   \draw (0) edge[<-] node {} (-4);  

  \node[int] (i) at (0.2,-2) {};

  \node[] (i') at (-0.3,-3) {$_i$};

  \node[] (i'1) at (0.7,-3) {$_{n+1}$};
   \draw (0) edge[<-] node[near end,left] {$_i$} (i);  
 \draw (i) edge[<-] (i');  
\draw (i) edge[<-] (i'1);  
\end{tikzpicture}-\sum_{i=1}^{m} \begin{tikzpicture}[baseline=-0.55ex,scale=0.6]
  \node[] (0) at (0,0) {$_{\Gamma}$};
  \node[] (1) at (-1,1.2) {$_1$};
  \node[] (2) at (-0.6,1.2) {$_2$};
  \node[] (.1) at (0.2,0.8) {\ldots};
  \node[] (4) at (1,1.2) {$_{m}$};
 
   \draw (0) edge[->] node {} (1);   
   \draw (0) edge[->] node {} (2);   
   \draw (0) edge[->] node {} (4);

     \node[] (-1) at (-1,-1.2) {$_1$};
  \node[] (-2) at (-0.6,-1.2) {$_2$};
  \node[] (-.1) at (-0.2,-.9) {\tiny{\ldots}};
  \node[] (-.1) at (0.5,-.9) {\tiny{\ldots}};
  \node[] (-4) at (1,-1.2) {$_{n}$};

   \draw (0) edge[<-] node {} (-1);   
   \draw (0) edge[<-] node {} (-2);   
   \draw (0) edge[<-] node {} (-4);  

  \node[int] (i) at (0.2,2) {};

  \node[] (i') at (-0.3,3) {$_i$};

  \node[] (i'1) at (1.7,1.5) {$_{n+1}$};
   \draw (0) edge[->] node[near end,left] {$_i$} (i);  
 \draw (i) edge[->] (i');  
\draw (i) edge[<-] (i'1);  
\end{tikzpicture},$$
$$\Delta_1\begin{tikzpicture}[baseline=-0.55ex,scale=0.6]
  \node[] (0) at (0,0) {$_{\Gamma}$};
  \node[] (1) at (-1,1.2) {$_1$};
  \node[] (2) at (-0.6,1.2) {$_2$};
  \node[] (.1) at (0.2,0.8) {\ldots};
  \node[] (4) at (1,1.2) {$_{m}$};
 
   \draw (0) edge[->] node {} (1);   
   \draw (0) edge[->] node {} (2);   
   \draw (0) edge[->] node {} (4);

     \node[] (-1) at (-1,-1.2) {$_1$};
  \node[] (-2) at (-0.6,-1.2) {$_2$};
  \node[] (-.1) at (0.2,-.9) {\ldots};
  \node[] (-4) at (1,-1.2) {$_{n}$};

   \draw (0) edge[<-] node {} (-1);   
   \draw (0) edge[<-] node {} (-2);   
   \draw (0) edge[<-] node {} (-4);  
\end{tikzpicture} = \sum_{i=1}^{m} \begin{tikzpicture}[baseline=-0.55ex,scale=0.6]
  \node[] (0) at (0,0) {$_{\Gamma}$};
  \node[] (1) at (-1,1.2) {$_1$};
  \node[] (2) at (-0.6,1.2) {$_2$};
  \node[] (.1) at (0.2,0.8) {\ldots};
  \node[] (4) at (1,1.2) {$_{m}$};
 
   \draw (0) edge[->] node {} (1);   
   \draw (0) edge[->] node {} (2);   
   \draw (0) edge[->] node {} (4);

     \node[] (-1) at (-1,-1.2) {$_1$};
  \node[] (-2) at (-0.6,-1.2) {$_2$};
  \node[] (-.1) at (-0.2,-.9) {\tiny{\ldots}};
  \node[] (-.1) at (0.5,-.9) {\tiny{\ldots}};
  \node[] (-4) at (1,-1.2) {$_{n}$};

   \draw (0) edge[<-] node {} (-1);   
   \draw (0) edge[<-] node {} (-2);   
   \draw (0) edge[<-] node {} (-4);  

  \node[int] (i) at (0.2,2) {};

  \node[] (i') at (-0.3,3) {$_i$};

  \node[] (i'1) at (0.9,3) {$_{m+1}$};
   \draw (0) edge[->] node[near end,left] {$_i$} (i);  
 \draw (i) edge[->] (i');  
\draw (i) edge[->] (i'1);  
\end{tikzpicture}-
\
\sum_{i=1}^{n} \begin{tikzpicture}[baseline=-0.55ex,scale=0.6]
  \node[] (0) at (0,0) {$_{\Gamma}$};
  \node[] (1) at (-1,1.2) {$_1$};
  \node[] (2) at (-0.6,1.2) {$_2$};
  \node[] (.1) at (0.2,0.8) {\ldots};
  \node[] (4) at (1,1.2) {$_{m}$};
 
   \draw (0) edge[->] node {} (1);   
   \draw (0) edge[->] node {} (2);   
   \draw (0) edge[->] node {} (4);

     \node[] (-1) at (-1,-1.2) {$_1$};
  \node[] (-2) at (-0.6,-1.2) {$_2$};
  \node[] (-.1) at (-0.2,-.9) {\tiny{\ldots}};
  \node[] (-.1) at (0.5,-.9) {\tiny{\ldots}};
  \node[] (-4) at (1,-1.2) {$_{n}$};

   \draw (0) edge[<-] node {} (-1);   
   \draw (0) edge[<-] node {} (-2);   
   \draw (0) edge[<-] node {} (-4);  

  \node[int] (i) at (0.2,-2) {};

  \node[] (i') at (-0.3,-3) {$_i$};

  \node[] (i'1) at (1.7,-1.5) {$_{m+1}$};
   \draw (0) edge[<-] node[near end,left] {$_i$} (i);  
 \draw (i) edge[<-] (i');  
\draw (i) edge[->] (i'1);  
\end{tikzpicture},
$$
where
$$
\begin{tikzpicture}[baseline=-.55ex,scale=0.6]
 \node[int] (a) at (0,0) {};
  \node[](out1) at (1,-1) {$_{(m+1)}$};
   \node[](out2) at (-1,-1) {$_i$};
   \node[](in) at (0,1) {$_i$};
     \draw (out1) edge[->] (a);
      \draw (out2) edge[->] (a);
       \draw (a) edge[->] (in);
\end{tikzpicture}=  \begin{tikzpicture}[baseline=-.55ex,scale=0.6]
 \node[draw,circle] (a) at (0,0) {$_i$};
 \node[draw,circle] (b) at (1.8,0) {$_{m+1}$};
 \node[] (c) at (0.75,0.5) {$_i$}; 
 \draw (a) edge[->] (b);
\end{tikzpicture},
\quad \text{ and }
 \begin{tikzpicture}[baseline=-.55ex,scale=0.6]
 \node[int] (a) at (0,0) {};
  \node[](out1) at (1,1) {$_{m+1}$};
   \node[](out2) at (-1,1) {$_i$};
   \node[](in) at (0,-1) {$_i$};
     \draw (a) edge[->] (out1);
      \draw (a) edge[->] (out2);
       \draw (in) edge[->] (a);
\end{tikzpicture}=\begin{tikzpicture}[baseline=-.55ex,scale=0.6]
 \node[draw,circle] (A) at (0,0) {$_i$};
  \draw (A) edge[loop]  node[midway, below]{$_{i}$} node[midway, above]{$_{m+1}$}(a); 	
\end{tikzpicture}.
$$
In words, the first part of the differential, $\delta$, splits vertices in a way that respects the cyclic ordering. The other part of the differential, $\Delta_1$, adds an edge between each pair of corners of each boundary.

\subsection{The Map $F:\mO \GC_1 \to \RGC$ }
Consider the deformation complex
$$
\Def(\hoLieB_{0,0} \to \LieB_{0,0}) \cong
\prod_{n,m\ge 1}\left(\LieB_{0,0}(n,m)\right)^{{\sym_{n}\times\sym_m}}.
$$
It is spanned by oriented graphs with all vertices 3-valent, with ingoing and outgoing hairs, without internal sources or targets, modulo the relations \eqref{eq:jacobi},\eqref{eq:cojac},\eqref{eq:IHXor}.
In \cite{MW2}, S. Merkulov and T. Willwacher constructed a quasi-isomorphism
\begin{eqnarray*}
F_1:\mO \GC_1 &\to & \Def(\hoLieB_{0,0} \to \LieB_{0,0})[1]\\
\Gamma &\mapsto & F_1(\Gamma).
\end{eqnarray*}
The element $F_1(\Gamma)$ is obtained from a graph $\Gamma\in \mO \GC_1$ by attaching an incoming leg to each source, an outgoing leg to each target, and setting it to $0$ if it contains a vertex that is not $3$-valent.

Next, the map $s:\LieB_{0,0}\to \RGra$ from Proposition \ref{prop:LieB_{0,0}-RGra} gives us a map of complexes
$$F_2:\Def(\hoLieB_{0,0}\to \LieB_{0,0})[1]  \to (\RGC[1],\delta+\Delta_1)$$
by $F_2(\Gamma: \hoLieB_{0,0}\to \LieB_{0,0}) := s\circ \Gamma: \hoLieB_{0,0}\to \RGra$.
We now have our map
\begin{equation} \label{eq:F}
F:=F_2\circ F_1:\mO \GC_1 \to  \Def(\hoLieB_{0,0}\to \LieB_{0,0})[1]  \to (\RGC[1],\delta+\Delta_1).
\end{equation}

We note that $F_2$ maps a graph in $\LieB_{0,0}(n,m)^{\sym_{n}\times\sym_m}$ to a sum of ribbon graphs with $n$ vertices and $m$ boundaries. We also have that $F_1$ maps a graph $\Gamma\in \mO \GC_1$ with $n$ sources and $m$ targets to a graph in $\LieB_{0,0}(n,m)^{\sym_{n}\times\sym_m}$.

Hence $F$ maps a graph $\Gamma$ with $m$ target vertices to a sum of ribbon graphs with $m$ boundaries. We may take a filtration on $\RGC$ by the number of boundaries, and a filtration on $\mO\GC_1$ by the number of target vertices. Then
$$gr(\RGC,\delta+\Delta_1)= (\RGC,\delta),$$
and
$$
gr(\mO \GC_1, \delta) = (\mO \GC_1, \delta_0).
$$
As $F$ maps a graph with $m$ target vertices to a sum of ribbon graphs with precisely $m$ boundaries, we get that
$$
gr F:gr(\mO \GC_1,\delta)\to  gr(\RGC[1],\delta+\Delta_1)
$$
is given by the same map of vector spaces
$$F:(\mO \GC_1,\delta_0)\to (\RGC[1],\delta).$$
We have now established both maps mentioned in Corollary \ref{cor:Mgn}.
\begin{thm} [Corollary \ref{cor:Mgn}]
We have a zig-zag of morphisms
$$(\HGC_{0}, \delta)\gets (\mO \GC_1, \delta_0)\to (\RGC[1],\delta),$$
where the left map is a quasi-isomorphism, given explicitly in \eqref{eq:G} and \eqref{eq:Ge}, and the right map is given in \eqref{eq:F}.
\end{thm}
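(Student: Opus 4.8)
The plan is to assemble the zig-zag from results already in hand, so that the only genuine verification concerns how the right-hand map interacts with a filtration. First I would obtain the left arrow by specializing the quasi-isomorphism $G\colon(\mO \GC_{n+1},\delta_0)\to(\HGC_n,\delta)$ of Theorem~\ref{thm:main} to $n=0$. Since $\HGC_{0,0}=\HGC_0$ is simply the $n=0$ instance of the general hairy complex, this immediately yields a quasi-isomorphism of complexes $G\colon(\mO \GC_1,\delta_0)\to(\HGC_0,\delta)$, described explicitly by \eqref{eq:G} and \eqref{eq:Ge}. No new argument is required for this half.

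For the right arrow I would begin from the Merkulov--Willwacher map $F=F_2\circ F_1\colon\mO \GC_1\to(\RGC[1],\delta+\Delta_1)$ of \eqref{eq:F} and isolate the single numerical fact that drives the filtration argument: $F_1$ sends a graph with $n$ sources and $m$ targets into $\LieB_{0,0}(n,m)^{\sym_n\times\sym_m}$, and $F_2$, being induced by $s\colon\LieB_{0,0}\to\RGra$ from Proposition~\ref{prop:LieB_{0,0}-RGra}, sends such an element to a sum of ribbon graphs each having exactly $m$ boundaries. Hence $F$ carries the number of targets to the number of boundaries on the nose. I would then place the decreasing filtration by number of targets on $\mO \GC_1$ and the decreasing filtration by number of boundaries on $\RGC$. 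The part of $\delta$ on $\mO \GC_1$ that creates a new target drops out of the associated graded, leaving $gr(\mO \GC_1,\delta)=(\mO \GC_1,\delta_0)$, while $\Delta_1$ raises the boundary count and so $gr(\RGC,\delta+\Delta_1)=(\RGC,\delta)$. Because $F$ strictly preserves filtration degree, $gr F$ is given by the very same formula on the underlying graded vector spaces, yielding the morphism of complexes $F\colon(\mO \GC_1,\delta_0)\to(\RGC[1],\delta)$ that serves as the right arrow.

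The hard part, and the only place where something must truly be checked, is this compatibility of $F$ with the two filtrations: one must confirm both that $F$ preserves the target/boundary count exactly --- tracing the leg-attaching recipe defining $F_1$ together with the explicit images of the two generators under $s$ --- and that no lower-filtration correction is produced, so that $gr F$ genuinely coincides with $F$ as a map of vector spaces. Once this is secured, concatenating the two arrows gives the stated zig-zag, with the left map a quasi-isomorphism by Theorem~\ref{thm:main} and the right map the associated graded of the Merkulov--Willwacher morphism, which is precisely the assertion of Corollary~\ref{cor:Mgn}.
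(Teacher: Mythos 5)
Your proposal is correct and takes essentially the same approach as the paper: the left arrow is the $n=0$ specialization of the quasi-isomorphism $G$ from Theorem~\ref{thm:main}, and the right arrow is obtained, exactly as in the paper, by observing that $F=F_2\circ F_1$ sends a graph with $m$ targets to a sum of ribbon graphs with precisely $m$ boundaries, so that passing to the associated gradeds of the filtrations by targets on $\mO\GC_1$ and by boundaries on $\RGC$ turns the same map of vector spaces into a morphism $(\mO\GC_1,\delta_0)\to(\RGC[1],\delta)$.
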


\end{document}